\newtheorem{teor}{Theorem}[section]
\newtheorem{prop}[teor]{Proposition}
\newtheorem{coro}[teor]{Corollary}
\theoremstyle{definition}
\newtheorem{defi}[teor]{Definition}
\newtheorem{hipos}[teor]{Hypotheses}
\newtheorem{nota}[teor]{Remark}
\newtheorem{notas}[teor]{Remarks}
\numberwithin{equation}{section}
\newcommand{\N}{\mathbb{N}}
\newcommand{\R}{\mathbb{R}}
\newcommand{\T}{\mathbb{T}}
\newcommand{\mA}{\mathcal{A}}
\newcommand{\mB}{\mathcal{B}}
\newcommand{\mC}{\mathcal{C}}
\newcommand{\mF}{\mathcal{F}}
\newcommand{\mH}{\mathcal{H}}
\newcommand{\mK}{\mathcal{K}}
\newcommand{\mM}{\mathcal{M}}
\newcommand{\mO}{\mathcal{O}}
\newcommand{\mP}{\mathcal{P}}
\newcommand{\mR}{\mathcal{R}}
\newcommand{\mS}{\mathcal{S}}
\newcommand{\mU}{\mathcal{U}}
\newcommand{\ep}{\varepsilon}
\newcommand{\pu}{{\cdot}}
\newcommand{\W}{\Omega}
\newcommand{\WW}{W^{1,\infty}}
\newcommand{\w}{\omega}
\newcommand{\wt}{\w\pu t}
\newcommand{\ws}{\w\pu s}
\newcommand{\hw}{\wih\w}
\newcommand{\hwt}{\wih\w\pu t}
\newcommand{\wit}{\widetilde}
\newcommand{\wih}{\widehat}
\newcommand{\n}[1]{\|#1\|}
\newcommand{\Frac}[2]{\displaystyle\frac{#1}{#2}}
\newcommand{\lsm}{\left[\begin{smallmatrix}}
\newcommand{\rsm}{\end{smallmatrix}\right]}
\newcommand{\Lin}{\text{\rm Lin}}
\newcommand{\dist}{\text{\rm dist}}
\begin{document}
\title[Global attractor for a state-dependent neural network]
{Existence of global attractor for a nonautonomous state-dependent
delay differential equation of neuronal type}
\author[C.~Elia]{Cinzia Elia}
\author[I.~Maroto]{Ismael Maroto}
\author[C.~N\'{u}\~{n}ez]{Carmen N\'{u}\~{n}ez}
\author[R.~Obaya]{Rafael Obaya}
\address[I.~Maroto, C.~N\'{u}\~{n}ez, R.~Obaya]
{Departamento de Matem\'{a}tica Aplicada, Universidad de
Valladolid, Paseo del Cauce 59, 47011 Valladolid, Spain}
\address[C.~Elia]{Dipartimento di Matematica,
Universit\`{a} degli Studi di Bari, Via Orabona 4,
70125 Bari, Italy}
\email[Cinzia Elia]{cinzia.elia@uniba.it}
\email[Ismael Maroto]{ismmar@eii.uva.es}
\email[Carmen N\'{u}\~{n}ez]{carnun@wmatem.eis.uva.es}
\email[Rafael Obaya]{rafoba@wmatem.eis.uva.es}
\thanks{Partly supported by Ministerio de Econom\'{\i}a y Competitividad / FEDER
under project MTM2015-66330-P, by Ministerio de Ciencia, Innovaci\'{o}n y Universidades
under project RTI2018-096523-B-I00, and by European Commission under project
H2020-MSCA-ITN-2014.}
\subjclass[2010]{
37B55, %Nonautonomous dynamical systems
34K20, %Stability theory for FDEs
37B25, %Lyapunov functions and stability; attractors, repellers
34K14, %Almost and pseudo-periodic solutions for FDEs
92B20  %Neural networks, artificial life and related topics
}
\date{}
\begin{abstract}
The analysis of the long-term behavior of the mathematical model of a neural
network constitutes a suitable framework to develop new tools for the dynamical
description of nonautonomous state-dependent delay equations (SDDEs).
The concept of global
attractor is given, and some results which establish properties ensuring
its existence and providing a description of its shape, are proved.
Conditions for the exponential stability of the global attractor
are also studied. Some properties
of comparison of solutions constitute a key in
the proof of the main results, introducing methods of monotonicity
in the dynamical analysis of nonautonomous SDDEs.
Numerical simulations of some illustrative models show
the applicability of the theory.
\end{abstract}
\keywords{
Nonautonomous state-dependent delay differential equation,
global attractor, neural network.
}
%upper Lyapunov exponent,
%monotonicity,
%biological neural network,
%synchronization
\maketitle
%%%%%%%%%%%%%%%%%%%%%%%%%%%%%%%%%%%%%%%%%%%%%%%%%%%%%%%%%%%%%%%%%%%%%%%%%%%%%%%%%%%%%
%%%%%%%%%%%%%%%%%%%%%%%%%%%%%%%%%%%%%%%%%%%%%%%%%%%%%%%%%%%%%%%%%%%%%%%%%%%%%%%%%%%%%
%%%%%%%%%%%%%%%%%%%%%%%%%%%%%%%%%%%%%%%%%%%%%%%%%%%%%%%%%%%%%%%%%%%%%%%%%%%%%%%%%%%%%
%%%%%%%%%%%%%%%%%%%%%%%%%%%%%%%%%%%%%%%%%%%%%%%%%%%%%%%%%%%%%%%%%%%%%%%%%%%%%%%%%%%%%
\section{Introduction}\label{sec1}
The analysis of nonautonomous differential equations constitutes a complex field
in Mathematics on which many researchers, starting from Poincar\'{e}, have actively
worked.~A large quantity of problems are relevant not only because of
their theoretical interest, but also due to their fundamental
role in the more accurate mathematical modeling of many different actual phenomena.
Generally speaking, the goal is to
understand the way in which the intrinsic dynamics of the dynamical system
determined by a nonautonomous equation, which is
due to the explicit time dependence of the law, affects the behavior of the phenomenon
under analysis. Quite often the dynamical scenario described by the analysis
reproduces well-known patterns of the autonomous case; but sometimes there appear
new scenarios which cannot occur in the autonomous or periodic cases, with a
high degree of complexity.
\par
Certain properties of regularity on the time variation of the functions determining
the equation allow us to include it in a collective family of equations of the
same type, whose solutions define a continuous or random flow or semiflow of
skew-product type. This procedure has been the initial point to develop a
wide collection of dynamical techniques, both analytical and numerical,
which constitute the core of a robust theory. Suitable references can be
those of Sell \cite{sell},
Sacker and Sell \cite{SackerSell},
Chow and Leiva \cite{chle1,chle2},
Arnold \cite{arnol},
Shen and Yi \cite{shyi},
Cheban {\em et al} \cite{chks},
Kloeden and Rassmussen \cite{klra},
Carvalho {\em et al} \cite{chlr},
Caraballo and Han \cite{caha},
Johnson {\em et al} \cite{jonnf},
as well as the works cited therein.
\par
The main objective of this paper is to provide new tools for the
dynamical description of nonautonomous functional differential
equations with state-dependent delay (SDDEs for short), focusing in
models of neural networks. We will show that some methods which
have been frequently used in the analysis of the long term dynamics
of neural networks with time-dependent delay can be adapted to the
case of state-dependent delay. The weak regularity of
the solutions of the SDDEs with respect to the initial states makes this
extension not trivial.
\par
Our tools are, roughly speaking, two. The first one is to define
and describe global and pullback attractors in this setting, as well
as to establish criteria ensuring their existence and providing a global
description of their shapes. As far as we know, this is the first time that
a theory on the
existence and dynamical properties of attractor sets is given
in the setting of nonautonomous SDDEs.
As second tool, we introduce arguments of the theory of nonautonomous
monotone dynamical  systems in the study of nonautonomous SDDEs.
We compare our SDDEs with simpler types of nonautonomous ordinary differentail equations
which satisfy a quasimonotone condition to determine the area containing the global attractor.
In a similar way, we compare the linearized families of SDDEs equations with families
of functional equation for which the delay is just time-dependent, in order to obtain appropriate
bounds for the upper Lyapuov exponents of minimal sets. In same cases these bounds show that
the attractor has a simple shape and the existence of globally exponentially stable recurrent solutions.
To our knowledge this is the first time that methods of nonautonomous monotone dynamical systems
are applied in the context of SDDEs.
%The second tool is to use comparison results for
%families of SDDEs satisfying a quasi-monotonicity condition:
%in some cases, comparing them with simpler types of nonautonomous differential
%equations allows us to determine the area in which the attractors lie,
%as well as to assert that the upper Lyapunov exponent of the global
%attractor is negative. To our knowledge,
%a theory of monotone dynamical systems for nonautonomous SDDEs
%has not been previously developed, and this fact provides our
%comparison results with independent interest.
\par
A large number of researchers have been interested in the
dynamics induced by SDDEs, motivated both by its high theoretical
interest and by the increasing number of models of applied sciences
which respond to this pattern. Among them, we can mention
Hartung~\cite{hart1, hart6},
Wu~\cite{wu},
Hartung {\em et al.}~\cite{hkvv},
Mallet-Paret and Nussbaum \cite{mpna},
Hu and Wu \cite{huwu},
Hu {\em et al.} \cite{huvz},
Walther \cite{walth0,walth},
He and de la Llave \cite{hell, hell2},
Krisztin and Rezounenko \cite{krre}
and Maroto {\em et al.}~\cite{mano1,mano2},
as well as the many references therein.
In particular, the regularity properties of the
solutions of families of SDDEs which we will use in this paper
are described in \cite{mano1,mano2}.
\par
Next we describe briefly the contents of this
paper, which are organized in three sections.
\par
We begin Section \ref{sec2} by recalling some standard notions of topological dynamics,
as well as the concepts of global attractor, exponentially stable set
and upper Lyapunov exponent. The last three definitions are referred to a
skew-product semiflow $(\W\times X,\zeta\,,\R^+)$ projecting on a flow
$(\W,\sigma,\R)$, where $\W$ is a compact metric space and $X$ is a Banach space.
For the main
purposes of this paper, $\W$ will be the hull of the almost periodic coefficients
of the SDDEs which models our neural network and $\sigma$
will be the time-translation flow on $\W$ (see Section \ref{3.sechull}
for the details). This hull procedure will allow us to construct
a family of SDDEs of the form
\begin{equation}\label{1.eq}
 \dot{y}(t)=F(\wt, y(t), y(t-\tau(\wt, y_{t})))\,,\qquad t\geq 0
\end{equation}
for $\w\in\W$, where $\{\wt\,|\;t\in\R\}$ is the $\sigma$-orbit of the point
$\w\in\W$. (The standard regularity conditions assumed on the map $F$ and
the delay $\tau$ are described in detail in Section \ref{sec2}).
The space $X$ will be $\WW_2:=\WW([-r,0],\R^2)$, where $r>0$ is the maximum delay.
And the (local) semiflow $\zeta\,$ will be given by
\[
 \zeta\colon\mU_2\subseteq\R^+\!\!\times\W\times\WW_2
 \to\W\times\WW_2\,,\quad (t,\w,x)\mapsto(\wt,u(t,\w,x))\,,
\]
with $u(t,\w,x)(s):=y(t+s,\w,x)$ for $s\in[-r,0]$, where $y(t,\w,x)$ is
the solution of the equation corresponding to $\w\in\W$ and $x\in\WW_2$ with
$y(s,\w,x)=x(s)$ for $s\in[-r,0]$. The map $\zeta\,$
may be noncontinuous on $[0,r]\times\W\times\WW_2$.
However, it satisfies enough continuity properties to be still
a valuable tool in the long-term analysis to be carried out.
As a matter of fact, it has a continuous restriction to the
compatibility set $\mC^0_n:=\{(\w,x)\in\W\times C^1_n\,|\;
\dot{x}(0^-)=F(\w,x(0),x(-\tau(\w,x)))\}$.
This property and some others are summarized in Section \ref{sec2}.
The set $\mC_n^0$ is closed and invariant, but in general not
a differentiable manifold in this nonautonomous setting.
One more result, concerning the monotonicity properties
of $\zeta\,$ as well as comparison results for \lq\lq ordered\rq\rq~families
of SDDEs satisfying a quasi-monotonicity condition, completes
the section. It adapts an already classical
result of Smith \cite{smit}.
\par
Section \ref{sec3} contains the core results of this paper.
It begins with the description of the model for the biological
network, given by a two-dimensional system of nonautonomous
SDDEs with almost periodic coefficients.
It is a simplified model for two groups of neurons: the
internal action of each group is assumed to be instantaneous,
while the action onto the other group is assumed to be delayed,
and with state-dependent delay. As said above, the hull construction
includes this system in a family of the type \eqref{1.eq}.
Therefore, we can define a semiflow, part of whose orbits
are defined exactly by the solutions of the initial system.
The topological dynamics techniques allow us to show the existence
of a global attractor $\mA$, which is determinant to understand the
long term dynamics. It is important to emphasize that the
classical theory of attractors is carried out in the autonomous case,
while here we are dealing with a nonautonomous (and state-dependent) problem:
the fundamental tool to make this extension possible is
the skew-product formalism. It is also remarkable that the
problems that the absence of global continuity causes,
both in the definitions and in the proofs of the results,
can be solved thanks to the actual continuity properties.
\par
Section~\ref{sec3} also contains a brief explanation of the way in which
the existence and properties of the global attractor ensure the
existence and some properties of the so-called pullback attractor
(see e.g.~\cite{chlr}) for the process
defined from the initial nonautonomous SDDE.
In addition, under the additional hypotheses of the exponential
stability of the minimal sets of $\W\times\WW_2$, the attractor $\mA$
turns out to agree with the graph of a continuous function
$a\colon\W\to\WW_2$, which we call copy of the base.
Moreover, all the semiorbits are exponentially
attracted to $\mA$. It is proved in \cite{mano2} that the
exponential stability of a minimal set is equivalent to the
negativeness of its upper Lyapunov exponent.
A procedure which makes it easier to determine if this condition
holds completes Section~\ref{sec3}.
\par
In Section \ref{sec4} we carry out some numerical experiments
for a particular model. First, using the comparison methods previously described,
we delimit a region containing the global attractor.
We perform simulations of the model under study both in the forward
sense and in the pullback sense. The computer simulations suggest the
existence of an attractor for the numerical method and show that
the bounds we obtained for the containing area are quite accurate.
We also check that the conditions ensuring that the
global attractor is a copy of the base are fulfilled if we are
more exigent in the choice of the delay, and give numerical evidence
that the attractor we see in the simulations is indeed a copy of the base.
\par
We finally point out that the ideas here developed
shall be useful in the analysis of many other phenomena which can be
modeled by nonautonomous SDDEs.
%%%%%%%%%%%%%%%%%%%%%%%%%%%%%%%%%%%%%%%%%%%%%%%%%%%%%%%%%%%%%%%%%%%%%%
%%%%%%%%%%%%%%%%%%%%%%%%%%%%%%%%%%%%%%%%%%%%%%%%%%%%%%%%%%%%%%%%%%%%%%
%%%%%%%%%%%%%%%%%%%%%%%%%%%%%%%%%%%%%%%%%%%%%%%%%%%%%%%%%%%%%%%%%%%%%%
%%%%%%%%%%%%%%%%%%%%%%%%%%%%%%%%%%%%%%%%%%%%%%%%%%%%%%%%%%%%%%%%%%%%%%
\section{Basic notions and properties}\label{sec2}
The basic notions and some classical results on topological dynamics
required in the paper are recalled in this section,
whose contents may be found in Sell~\cite{sell},
Sacker and Sell~\cite{SackerSell, SackerSellDich}, Hale~\cite{hale4},
Chow and Leiva \cite{chle1,chle2}, Shen and Yi \cite{shyi},
and references therein.
\par
Let $\W$ be a complete metric space with distance $d_\W$. A {\em flow\/}
$(\W, \sigma, \R)$ is defined by a Borel measurable map
$\sigma\colon\R\times\W\rightarrow\W$, $(t,\w)\mapsto\sigma(t,\w)$
satisfying
\[
 \!\!\!\!\!\!\!\!\!\!
 \text{\hypertarget{(1)}(\text{f1}) $~\sigma_0=\text{Id}\,,$\hspace{2cm}
 \hypertarget{(2)}(\text{f2}) $~\sigma_{t+l}=\sigma_{t}\circ\sigma_{l}\;$
 for all $s,t\in\R$}\,,
\]
where $\sigma_t(\w):=\sigma(t,\w)$ for all $t\in\R$ and $\w\in\W$.
The flow is {\em continuous\/} if $\sigma$ is continuous.
The sets
$\{\sigma_t(\w)\,|\,t\in\R\}$,
$\{\sigma_t(\w)\,|\,t\ge 0\}$
and $\{\sigma_t(\w)\,|\,t\le 0\}$
are respectively the {\em orbit\/}, {\em positive\/} or {\em forward semiorbit}
and {\em negative\/} or {\em backward semiorbit} of the point $\w\in\W$.
If the forward (or backward) semiorbit is relatively compact,
the {\em omega-limit set} (resp.~{\em alpha-limit set})
{\em of the point $\w\in\W$} (or {\em of its semiorbit})
is the set of limits of sequences of the form
$(\sigma_{t_n}(\w))$ with $(t_n)\uparrow\infty$
(resp.~$(t_n)\downarrow\infty$).
A Borel set $\mM\subseteq\W$ is $\sigma$-{\em invariant\/}
(or just {\em invariant}, if no confusion arises)
if $\sigma_t(\mM)=\mM$ for all $t\in\R$, and it is
$\sigma$-{\em minimal\/} (or {\em minimal})
if it is compact, $\sigma$-invariant,
and it contains properly no nonempty compact $\sigma$-invariant subset.
Zorn's lemma ensures
that every $\sigma$-invariant compact set contains a minimal subset; and
clearly a compact $\sigma$-invariant subset is minimal if and only
if each one of its semiorbits is dense in it.
A flow $(\W, \sigma, \R)$ is {\em recurrent\/} or {\em minimal\/}
if $\W$ itself is minimal.
A continuous flow $(\W, \sigma, \R)$ is {\em almost periodic\/} if for any
$\ep>0$ there is a $\delta=\delta(\ep)>0$ such that, if $\w_1,\w_2\in\W$ satisfy
$d_\W(\w_1,\w_2)<\delta$, then
$d_\W(\sigma_t(\w_1), \sigma_t(\w_2))<\ep$ for all $t\in\R$.
The flow is {\em local\/}
if the map $\sigma$ is defined and satisfies \hyperlink{(1)}{(f1)} and
\hyperlink{(2)}{(f2)} on an open subset
$\mU\subseteq\R\times\W$ containing $\{0\}\times\W$.
\par
As usual, we represent $\R^\pm:=\{t\in\R\,|\,\pm t\ge 0\}$.
A {\em semiflow\/} $(\W,\sigma,\R^+)$ is given by
a Borel measurable map $\sigma\colon\R^+\!\!\times\W\rightarrow\W$,
$(t,\w)\mapsto\sigma(t,\w)$
satisfying \hyperlink{(1)}{(f1)} and \hyperlink{(2)}{(f2)}
for all $t,s\in\R^+$; and
it is continuous if $\sigma$ is continuous.
Positive semiorbits and omega-limit sets are defined as above.
A Borel subset $\mM\subseteq\W$ is {\em positively $\sigma$-invariant\/}
if $\sigma_t(\mM)\subseteq \mM$
for all $t\ge 0$.
A positively $\sigma$-invariant compact set $\mM$ is $\sigma$-{\em minimal\/}
(or {\em minimal}) if it
does not contain properly any positively $\sigma$-invariant
compact set. If $\W$ is minimal,
we say that the semiflow $(\W,\sigma,\R^+)$ is {\em minimal}.
The semiflow is {\em local\/} if the map $\sigma$
is defined, continuous, and satisfies \hyperlink{(1)}{(f1)} and
\hyperlink{(2)}{(f2)} on an open subset $\mU\subseteq\R^+\!\!\times\W$
containing $\{0\}\times\W$. In this case, the definitions of positively invariant set and
minimal set are the same as above. In particular, they are composed of globally defined
positive semiorbits, so that the restriction of the semiflow to one of these sets is global.
\par
Let the semiflow $(\W,\sigma,\R^+)$ be continuous. A point $\w\in\W$
{\em has a complete orbit in $\W$} if there exists a continuous map
$\theta_{\w}\colon\R\!\to\W$ such that $\theta_{\w}(0)=\w$
and $\sigma(t, \theta_{\w}(s))=\theta_{\w}(t+s)$ whenever $s\in\R$ and
$t\ge 0$. If the corresponding
negative semiorbit $\{\theta_\w(t)\,|\;t\le 0\}$ is relatively compact, then
it has an alpha-limit set, defined as above.
A set $\mM\subseteq\W$ is $\sigma$-invariant
if $\sigma_t(\mM)=\mM$ for all $t\ge 0$.
Note that this condition is quite
stronger than the positively $\sigma$-invariance.
It is not hard to prove that $\mM$ is
$\sigma$-invariant if and only if it is
composed by complete orbits of its elements: see, e.g., Lemma 1.4 of
Carvalho {\em et al.}~\cite{chlr}. In addition, a minimal set
is $\sigma$-invariant, as easily deduced from the minimality itself.
The same happens with the omega-limit sets of globally defined and
relatively compact semiorbits: see Proposition II.2.1 of~\cite{shyi}.
If $\mM$ is a $\zeta\,$-invariant set, the restricted semiflow
$(\mM,\sigma,\R^+)$ {\em admits a continuous
flow extension\/} if there exists a continuous
flow $(\mM,\bar{\sigma},\R)$ such that $\bar{\sigma}(t,\w)=\sigma(t,\w)$
for all $t\in\R^+$ and $\w\in\mM$.
If $\mM$ is locally compact, then the existence of a continuous flow
extension is equivalent to the uniqueness of
the complete orbit in $\mM$ of
each one of its points: see Theorem II.2.3 of~\cite{shyi}.
\par
Now let $(\W,\sigma,\R^+)$ be a global {\em continuous\/}
semiflow on a {\em compact\/} metric space $\W$, and let $X$ be a Banach space.
We will represent $\wt:=\sigma(t,\w)$. A local
semiflow ($\W\times X,\zeta\,,\R^+$)
is a local {\em skew-product semiflow with base\/} $(\W,\sigma,\R)$
{\em and fiber $X$} if it takes the form
\begin{equation}\label{2.defPi}
 \zeta\colon\mU\subseteq\R^+\!\!\times\W\times X\to\W\times X\,,\quad
 (t,\w,x)\mapsto(\wt, u(t,\w,x))\,.
\end{equation}
It is frequently assumed that the base semiflow is in fact a flow.
In this case, a compact set $\mK\subset\W\times X$ is a {\em copy of the base\/}
if it is $\zeta\,$-invariant and agrees with the graph of a continuous function
$k\colon\W\to X$. Note that, in this case, $u(t,\w,k(\w))=k(\wt)$ for all
$\w\in\W$ and all $t\ge 0$.
\par
The following definitions and properties refer to the case of
a {\em continuous\/} skew-product semiflow $(\W\times X,\zeta\,,\R^+)$.
%EN NUESTRO CASO NO TENEMOS $\W\times X$, SINO $\mC^0$. PUEDE SER ALGO AS\'{I}:
%If $\mC\subseteq\W\times X$, if $\mU$ is an open subset of
%$\R^+\times\mC$ containing $\{0\}\times\mC$, and if the map
%\begin{equation}\label{2.defPiC}
% \zeta\colon\mU\subseteq\R^+\!\!\times\W\times X\to\W\times X\,,\quad
% (t,\w,x)\mapsto(\wt, u(t,\w,x))
%\end{equation}
%is well defined and continuous, with $\zeta(0,\w,x)=(\w,x)$ and
%$\zeta(t+s,\w,x)=\zeta(t,\zeta(s,\w,x))$, we say that $(\mC,\zeta\,,\R^+)$ is a
%{\em skew-structured
%semiflow with base\/}\ $(\W,\sigma,\R)$.
Recall that $d_\W$ is the distance in the metric space $\W$ and let $\n{\pu}_X$
be the norm in the Banach space $X$. Given two subsets $\mC_1$ and $\mC_2$ of
$\W\times X$, we denote the {\em Hausdorff semidistance\/}
from $\mC_1$ to $\mC_2$ by
\begin{equation}\label{2.dist}
 \dist(\mC_1,\mC_2):=\sup_{(\w_1,x_1)\in\mC_1}\left(\inf_{(\w_2,x_2)\in\mC_2}
 \big(d_\W(\w_1,\w_2)+\n{x_1-x_2}_X\big)\right).
\end{equation}
For further purposes we recall that the {\em Hausdorff distance}
is defined by
\begin{equation}\label{2.distreal}
 d_\mH(\mC_1,\mC_2):=\max\big(\dist(\mC_1,\mC_2),\dist(\mC_2,\mC_1)\big)\,,
\end{equation}
and that these definitions are valid if we substitute
$\W\times X$ by any metric space.
\begin{defi}\label{2.defatt}
A set $\mS\subset\W\times X$ {\em attracts a set
$\mC\subseteq\W\times X$ under $\zeta\,$} if
$\zeta_{\,t}(\mC)$ is defined for all $t\ge 0$ and
$\lim_{t\to\infty}\dist(\zeta_{\,t}(\mC),\mS)=0$. The semiflow
$\zeta\,$ is {\em bounded
dissipative\/} if there exists a bounded set $\mS$ attracting all
the bounded subsets of $\W\times X$ under $\zeta\,$.
A set $\mA\subset\W\times X$
is a {\em global attractor\/} if it is compact, $\zeta\,$-invariant, and it
attracts every bounded subset of $\W\times X$ under $\zeta\,$.
Finally, a set $\mS$ is {\em absorbing under $\zeta\,$} if, for any
bounded set $\mB$, there exists $t_0=t_0(\mS,\mB)$ such that
$\zeta_{\,t}(\mB)\subseteq\mS$ for all $t\ge t_0$.
\end{defi}
\begin{notas}\label{2.notas}
1.~It is immediate to observe that a semiflow $\zeta\,$ needs to be
globally defined in order to be bounded
dissipative, and that the existence of a bounded absorbing set
ensures the bounded dissipativity of the semiflow
and the boundedness of any semiorbit.
\smallskip\par
2.~If a global attractor $\mA$ exists, then it contains any
other closed, bounded, and $\zeta\,$-invariant set $\mB$:
$0=\lim_{t\to\infty}\dist(\zeta_{\,t}(\mB),\mA)=\lim_{t\to\infty}\dist(\mB,\mA)$,
which ensures that $\mB\subseteq\mA$. In particular,
any $\zeta\,$-minimal set is contained in $\mA$.
A similar argument shows that $\mA$ is contained
in any closed
bounded set that attracts all the bounded subsets of $\W\times X$ under $\zeta\,$.
In particular, the attractor $\mA$ is unique, and it
is contained in any absorbing set.
\smallskip\par
3.~Recall that the $\zeta\,$-invariance of the
global attractor $\mA$ means that any of its elements has
a complete orbit in $\mA$. If fact, a point $(\w,x)$ belongs to $\mA$ if and only
it admits a complete orbit in $\W$ which is bounded:
see e.g.~Theorem 1.7 of \cite{chlr}.
\end{notas}
The next concept will be fundamental in the proofs of the main results.
\begin{defi}\label{2.defmonotone}
Suppose that the Banach space $X$ is partially ordered.
The semiflow $(\W\times X,\zeta\,,\R^+)$ defined by~\eqref{2.defPi}
is {\em monotone\/} if, for all $\w\in\W$ and all
$x_1,x_2\in X$ satisfying $x_1\le x_2$, it is $u(t,\w,x_1)\le u(t,\w,x_2)$ for all
$t\ge 0$ such that $(t,\w,x_1)$ and $(t,\w,x_2)$ belong to $\mU$.
In the case that the semiflow $\zeta\,$ is induced by a family of
differential equations, the elements of the family are {\em cooperative}.
\end{defi}
Let us now give the definition of uniform exponential stability, which refers to
a compact set $\mK\subset\W\times X$ projecting over the whole base; i.e.,
such that $\mK_\w:=\{x\in X\,|\;(\w,x)\in\mK\}$
is nonempty for all $\w\in\W$
(which is always the case if $\mK$ is positively $\zeta\,$-invariant and $\W$
is minimal).
\begin{defi}\label{2.defi3}
A positively $\zeta\,$-invariant compact set $\mK\subset\W\times X$ projecting over the whole
base is {\em exponentially stable} if there exist $\delta_0>0$, $k\ge 1$ and $\alpha>0$,
such that, if $(\w,\bar x)\in \mK$ and $(\w,x)\in\W\times X$
satisfy $\n{x-\bar x}_X<\delta_0$,
then $u(t,\w,x)$ is defined for $t\in[0,\infty)$ and
$\n{u(t,\w,x)-u(t,\w,\bar x)}_X\le k\,e^{-\alpha·t}\,\n{x-\bar x}_X$ for all $t\ge 0$.
The restricted semiflow $(\mK, \zeta\,, \R^+)$ is said to be {\em exponentially stable}.
\end{defi}
The last definition
refers to the case of a {\em linear\/} skew-product semiflow.
A global continuous skew-product semiflow $\zeta\,$ is {\em linear\/}
if it takes the form
\[
 \zeta\colon\R^+\!\!\times\W\times X\to\W\times X\,,\quad
 (t,\w,x)\mapsto(\wt, \phi(t,\w)\,x)\,,
\]
where $\phi(t,\w)\colon X\to X$ is a bounded linear operator.
(See Remark 2.5 of \cite{mano2} in order to see that the next definition
makes sense also in the case that the base flow $(\W,\sigma,\R^+)$
is a semiflow and not a flow.)
\begin{defi}\label{2.defLyap}
The {\em upper Lyapunov exponent of the set $\W$ for the semiflow $(\W,\zeta\,,\R^+)$} is
\[
 \lambda_{\W}:=\sup_{\w\in\W}\left(
 \sup_{x\in X,\,x\ne 0}\lambda^+_s(\w,x)\right)\,,
\]
where
\begin{equation}\label{2.lyapind}
 \lambda^+_s(\w,x):=\limsup_{t\to\infty}\frac1{t}\:\ln\n{\phi(t,\w)\,x}_X\,.
\end{equation}
\end{defi}
Some notation used throughout the paper is now described.
Given two Banach spaces $X$ and $Y$ with norms $\n{\pu }_X$ and $\n{\pu }_Y$,
$\Lin(X,Y)$ represents the set of bounded linear maps $\phi\colon X\to Y$ equipped
with the operator norm $\n{\phi}_{\Lin(X,Y)}=\sup_{\n{x}_X=1}\n{\phi(x)}_Y$.
Let us fix $r>0$. The set $C_n$ is the Banach space of continuous functions
$C([-r, 0], \R^n)$ equipped with the norm $\n{\psi}_{C_n}:=\sup_{s\in[-r, 0]}|\psi(s)|$,
where $|\cdot|$ represents the Euclidean norm in $\R^n$.
The set $L^\infty$ is
the space of Lebesgue-measurable functions $\psi\colon[-r, 0]\rightarrow\R^n$
which are {\em essentially bounded\/}, which means that there exists $k\ge 0$ such
that the set $\{x\in[-r, 0]\:|\;|\psi(x)|>k\}$ has zero measure. The norm on $L^\infty$
is defined as the inferior of the set of real numbers $k\ge 0$ with the previous
property and denoted by $\n\pu _{L^\infty}$.
The set $\WW_n$ is the Banach space of
Lipschitz-continuous functions $\psi\colon[-r, 0]\rightarrow\R^n$ equipped with the
norm
\[
 \n{\psi}_{\WW_n}:=\max\{\n{\psi}_{C_n}, \n{\dot{\psi}}_{L^\infty}\}\,.
\]
The subset of $\WW_n$ of the $C^1$-functions on $[-r,0]$
%equipped with the norm
%$\n{\psi}_{C^1_n}:=\n{\psi}_{C_n}+\n{\dot\psi}_{C_n}$
will be denoted by $C^1_n$.
Finally, given a continuous function
$x\colon[-r, \gamma]\to\R^{n}$ for $\gamma>0$ and a time
$t\in[0,\gamma]$, we denote by $x_t\in C_n$ the function defined by
$x_t(s):=x(t+s)$ for $s\in[-r, 0]$.
%%%%%%%%%%%%%%%%%%%%%%%%%%%%%%%%%%%%%%%%%%%%%%%%%%%%%%%%%%%%%%%%%%%%%%
%%%%%%%%%%%%%%%%%%%%%%%%%%%%%%%%%%%%%%%%%%%%%%%%%%%%%%%%%%%%%%%%%%%%%%
%%%%%%%%%%%%%%%%%%%%%%%%%%%%%%%%%%%%%%%%%%%%%%%%%%%%%%%%%%%%%%%%%%%%%%
%%%%%%%%%%%%%%%%%%%%%%%%%%%%%%%%%%%%%%%%%%%%%%%%%%%%%%%%%%%%%%%%%%%%%%
\subsection{Some basic facts on state-dependent delay equations}\label{sec5}
Let $(\W,\sigma,\R)$ be a continuous flow on a compact metric space, and
let us consider the family of nonautonomous SDDEs
\begin{equation}\label{2.eq}
\dot{y}(t)=F(\wt, y(t), y(t-\tau(\wt, y_{t})))\,,\qquad t\geq 0\,,
\end{equation}
for $\w\in\W$, where $F$ and $\tau$ satisfy the following conditions:
\begin{itemize}
  \item[\bf H1] \hypertarget{2.H1}
  $F\colon\W\times\R^{n}\times\R^{n}\to\R^{n}$ is continuous,
  and its partial derivatives with respect to the second and third arguments
  exist and are continuous on $\W\times\R^{n}\times\R^{n}$.
  \item[\bf H2] \hypertarget{2.H2}
  $\tau\colon\W\times C_n\to[0,r]$ is continuous and differentiable
  in the second argument, with $D_2\tau\colon\W\times C_n\to\Lin(C_n,\R)$ continuous.
\end{itemize}
We will use the notation \eqref{2.eq}$_\w$ to refer
to the system of this family corresponding to the point $\w$, and will proceed
in an analogous way for the rest of the equations appearing in the paper.
\par
The classical theory of finite-delay differential equations provides at least
a solution $x(t)$ of a functional differential equation
$x’(t) = g(t,x_t)$ whenever $g$ is continuous: see e.g. Chapter 2 of
\cite{hale2}. But the uniqueness requires additional conditions on the
Lipschitz behavior of $g$, which are not guaranteed by the conditions
{\rm \hyperlink{2.H1}{H1}} and {\rm \hyperlink{2.H2}{H2}}:
a simple adaptation to the case of finite delay of the
the example of \cite{wins} described in
Section 3.1 of \cite{hkvv} provides an equation with
two solutions for the same continuous initial data.
Theorem 1 of \cite{hart1} shows that the uniqueness is
indeed true under conditions {\rm \hyperlink{2.H1}{H1}}
and {\rm \hyperlink{2.H2}{H2}}
if the initial data is taken in $\WW_n$.
The next result, strongly based on Theorem 1 of \cite{hart1},
is proved in Theorem 3.3 and Corollary 3.4 of \cite{mano1} (which contain
more information). It provides a semiflow $(\W\times\WW_n,\zeta\,,\R^+)$ whose global
continuity cannot be ensured, but with strong continuity properties
which make it a valuable tool for the use of the techniques of
topological dynamics in the
analysis of the long-term behavior of the solutions of \eqref{2.eq}.
\begin{teor}\label{2.flujocontinuo}
Suppose that conditions {\rm \hyperlink{2.H1}{H1}} and
{\rm \hyperlink{2.H2}{H2}} hold. Then,
\begin{itemize}
\item[(i)] for $\w\in\W$ and $x\in\WW_n$, there exists a unique
 maximal solution $y(t,\w,x)$ of the equation \eqref{2.eq}$_\w$
 satisfying $y(s,\w,x)=x(s)$ for $s\in[-r,0]$, which is defined for
 $t\in[-r,\beta_{\w,x})$ with $0<\beta_{\w,x}\le\infty$. In particular,
 $y(t,\w,x)$ is continuous on $[-r,\beta_{\w,x})$ and satisfies~\eqref{2.eq}$_\w$
 on $(0,\beta_{\w,x})$, and there exists the
 lateral derivative $\dot y(0^+\!,\w,x)=F(\w,y(0),y(-\tau(\w,x))$.
\end{itemize}
Let us set
\begin{align*}
 &\mU_n:=\{(t,\w,x)\,|\;(\w,x)\in\W\times\WW_n,\;t\in[0,\beta_{\w,x})\}
 \subseteq\R^+\!\!\times\W\times\WW_n,\\
 &\wit\mU_n:=\{(t,\w,x)\in\mU_n\,|\;t\ge r\}\subset\R^+\!\times\W\times\WW_n,
\end{align*}
provide them with the subspace topology, and define
\begin{equation}\label{2.defu}
 u(t,\w,x)(s):=y(t+s,\w,x)
\end{equation}
for every $(t,\w,x)\in\mU_n$ and $s\in[-r,0]$, Then,
\begin{itemize}
\item[(ii)] $u(t,\w,x)\in\WW_n$ for all $t\in[0,\beta_{\w,x})$.
\item[(iii)] If $\sup_{t\in[0,\beta_{\w,x})}
 \n{u(t,\w,x)}_C<\infty$ then $\beta_{\w,x}=\infty$ and, in addition,
 the set $\{(\wt,u(t,\w,x))\,|\;t\in[r,\infty)\}
 \subset\W\times\WW_n$ is relatively compact.
\item[(iv)] The set $\mU_n$ is open in $\R^+\!\times\W\times\WW_n$ and the map
\begin{equation}\label{2.defPie}
 \zeta\colon\mU_n\subseteq\W\times\WW_n\to\W\times\WW_n\,,\quad (t,\w,x)\mapsto(\wt,u(t,\w,x))
\end{equation}
defines a semiflow.
\item[(v)] The map $\mU\to\W\times C_n\,,\;
 (t,\w,x)\mapsto(\wt,u(t,\w,x))$ is continuous.
\item[(vi)] The map
 $\wit\mU_n\to\W\times\WW_n,\;(t,\w,x)\mapsto(\wt,u(t,\w,x))$ is continuous.
\item[(vii)] Let us fix $\wit t\ge 0$ with $(\mU_n)_{\wit t}:=
 \{(\w,x)\,|\;(\wit t,\w,x)\in\mU\}$
 nonempty. Then the map
 $(\mU_n)_{\wit t}\to\W\times\WW_n,\;(\w,x)\mapsto
 (\w\pu\wit t,u(\wit t,\w,x))$ is continuous.
\item[(viii)] Let $\mK\subset\W\times\WW_n$
 be a positively $\zeta\,$-invariant compact set. Then the restriction
 of $\zeta\,$ to $\mK$ defines a global continuous semiflow on $\mK$.
\item[(ix)] The map $t\mapsto y(t,\w,x)$ is $C^1$ on $[-r,\beta_{\w,x})$ if
and only if $(\w,x)$ belongs to
%the set of pairs \lq\lq(equation, initial datum)"
\begin{equation}\label{3.defC0}
 \mC^0_n:=\{(\w,x)\in\W\times C^1_n\,|\;\dot{x}(0^-)=F(\w,x(0),x(-\tau(\w,x)))\}\,,
\end{equation}
which is closed and positively $\zeta\,$-invariant,
and $\zeta(t,\w,x)\in\mC^0_n$ if
$(t,\w,x)\in\mU_n$ and $t\ge r$. In addition,
if $\mU^{\,0}_n:=\{(t,\w,x)\in\mU_n\,|\;(\w,x)\in\mC^0_n\}$,
then
\begin{equation}\label{2.defPi0}
 \zeta^0\colon\mU^{\,0}_n\subseteq\R^+\times\mC^0_n\to\mC^0_n\,,\quad
 (t,\w,x)\mapsto(\wt,u(t,\w,x))
\end{equation} defines a continuous semiflow.
\end{itemize}
\end{teor}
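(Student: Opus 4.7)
The plan is to deduce the statement from the existence and uniqueness theorem of Hartung (Theorem~1 of \cite{hart1}) for SDDEs with Lipschitz initial data, supplemented by a careful analysis of how the solution depends on $(\w,x)$ in the $C_n$ and $\WW_n$ topologies (these latter points being the content of Theorem~3.3 and Corollary~3.4 of \cite{mano1}). For (i), fix $(\w,x)\in\W\times\WW_n$ and rewrite \eqref{2.eq}$_\w$ as a functional differential equation with right-hand side $G_\w(t,\psi):=F(\wt,\psi(0),\psi(-\tau(\wt,\psi)))$. Under \hyperlink{2.H1}{H1}--\hyperlink{2.H2}{H2}, $G_\w$ is locally Lipschitz on the subset of $C_n$ formed by segments of a common Lipschitz constant $L$: if $\psi_1,\psi_2$ are such, then $|\psi_1(-\tau_1)-\psi_2(-\tau_2)|\le L\,|\tau_1-\tau_2|+\n{\psi_1-\psi_2}_{C_n}$ with $|\tau_1-\tau_2|$ controlled by $\n{D_2\tau}_{\Lin(C_n,\R)}\,\n{\psi_1-\psi_2}_{C_n}$. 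Hartung's fixed-point argument then yields a unique maximal solution $y(\pu,\w,x)$ on $[-r,\beta_{\w,x})$ whose segments remain Lipschitz, and the right derivative $\dot y(0^+,\w,x)$ is read off directly from \eqref{2.eq}$_\w$.

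For (ii) and (iii), the boundedness of $F$ on compact sets together with the continuity of $y$ yields a bound on $\dot y$ on each closed subinterval of $[-r,\beta_{\w,x})$, giving $u(t,\w,x)\in\WW_n$. If $\sup_t\n{u(t,\w,x)}_{C_n}<\infty$ the same reasoning produces a global Lipschitz bound on $y$, which rules out blow-up and delivers a bounded equi-Lipschitz family $\{u(t,\w,x)\,|\;t\ge r\}$, relatively compact in $C_n$ by Arzel\`a--Ascoli and in $\WW_n$ because a uniform Lipschitz bound is preserved under $C_n$-convergence. The semiflow identity in (iv) is formal from uniqueness, while openness of $\mU_n$ and the continuity statement (v) into $\W\times C_n$ are the delicate step: one compares $y(t,\w,x)$ and $y(t,\bar\w,\bar x)$ on a common interval via the integral form of \eqref{2.eq} and a Gronwall estimate, in which the uniform Lipschitz character of the initial segment and of the solution are what tame the state-dependent delay. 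For $t\ge r$ the segment $u(t,\w,x)$ is $C^1$ with derivative given pointwise by the equation, so the $C_n$-convergence provided by (v) upgrades automatically to $\WW_n$-convergence, yielding (vi); (vii) is the fiber version of (vi). Claim (viii) is then immediate: on a compact positively invariant set $\mK$ one has $\beta_{\w,x}=\infty$ and a uniform Lipschitz bound available on $\mK$, so $C_n$-continuity and $\WW_n$-continuity coincide and $\zeta\,$ restricts to a global continuous semiflow on $\mK$.

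For (ix), $y(\pu,\w,x)$ is $C^1$ at $t=0$ exactly when the one-sided derivatives agree, i.e., $\dot x(0^-)=F(\w,x(0),x(-\tau(\w,x)))$, which is the defining condition of $\mC^0_n$; closedness of $\mC^0_n$ follows from continuity of each ingredient on $\W\times C^1_n$, positive invariance is immediate because once compatibility holds at $t=0$ the solution is classically $C^1$ on its whole domain so each translated segment again satisfies the compatibility at its left endpoint, and for $t\ge r$ the segment is automatically $C^1$ with the equation providing the compatibility condition. The continuity of $\zeta^0$ on $\mU_n^{\,0}$ then follows by combining (v)--(vi) with the closedness of $\mC^0_n$ in $\W\times\WW_n$. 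The main obstacle throughout is the absence of $\WW_n$-continuity of $\zeta\,$ for small $t$: the left derivative $\dot x(0^-)$ of the initial segment is unconstrained, while $\dot y(0^+,\w,x)$ is fixed by \eqref{2.eq}$_\w$, so perturbations close in $C_n$ but with very different derivatives at $0^-$ produce segments that stay far apart in $\WW_n$ until the discontinuity window $[0,r]$ has been shifted out. This is precisely why the statement separates $t<r$ (continuity only in $\W\times C_n$) from $t\ge r$ (continuity in $\W\times\WW_n$), and why the compatibility set $\mC^0_n$ intervenes only in item (ix).
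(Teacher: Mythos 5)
The paper gives no proof of this theorem: it is imported verbatim from Theorem~3.3 and Corollary~3.4 of \cite{mano1} (themselves built on Theorem~1 of \cite{hart1}), and your plan invokes precisely these sources and reconstructs their arguments along the same lines, so the approach coincides with the paper's. The one loose point is your justification of the $\WW_n$-relative compactness in (iii): a uniform Lipschitz bound plus $C_n$-convergence does \emph{not} yield $\WW_n$-convergence (consider $\psi_k(s)=k^{-1}\sin(ks)$), so one must instead argue, as you correctly do for (vi), that for $t\ge r$ the derivative of the segment is given pointwise by the equation, which supplies the equicontinuity of the derivatives needed for precompactness in $\WW_n$.
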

%However, the fiber of $\mC^0$ over each base point $\w$,
%which is a continuously differentiable submanifold
%of a Banach space (see Proposition 3.4 of \cite{walth}) varying
%with $\w$, does not provide a suitable framework for the application of
%our topological dynamics methods in the long-term analysis.
%We will see that the global lack of continuity of the map \eqref{2.defPie}
%is not an obstacle for the use of the classical dynamical tools.
\begin{notas}\label{2.notapseudo}
1.~The definition of $\zeta\,$-invariant set is
the same as in the case of a continuous semiflow.
The assertions in Theorem~\ref{2.flujocontinuo}(ix)
make it easy to prove that any omega-limit set (and
hence any minimal set) is $\zeta$-invariant, as in the continuous case.
Also the concepts given in Definitions \ref{2.defatt},
\ref{2.defmonotone} and \ref{2.defi3} can
be adapted to the case of our semiflow $(\W\times\WW_n,\zeta\,,\R^+)$,
and the properties stated in Remarks~\ref{2.notas} hold.
In fact, the properties of regularity
stated in the previous theorem will allow us to apply
standard topological methods to the map $\zeta$ on $\W\times\WW_n$.
\par
2.~Theorem~\ref{2.flujocontinuo}(ix) also ensures that
any $\zeta$-invariant set, as omega-limit
sets, minimal sets, and the global attractor (if it exists),
is contained in the set $\mC^0_n$ defined by \eqref{3.defC0},
for which the restricted semiflow is continuous.
And these sets are the key for the analysis of the long term dynamics.
\end{notas}
Let us now consider the usual componentwise order in $\R^n$:
$\lsm\alpha_1\\[-.2cm]\vdots\\\alpha_n\rsm\le\lsm\beta_1\\[-.2cm]\vdots\\\beta_n\rsm$
if and only if $\alpha_i\le\beta_i$ for $i=1,\ldots,n$.
%, and
%$\lsm\alpha_1\\[-.2cm]\vdots\\\alpha_n\rsm\ll\lsm\beta_1\\[-.2cm]\vdots\\\beta_n\rsm$
%if and only if $\alpha_i<\beta_i$ for $i=1,\ldots,n$.
It is easy to check that the Euclidean norm in $\R^n$ is
{\em monotone}: if $0\le \alpha\le\beta$, then $|\alpha|\le|\beta|$.
It follows easily from here
that a set $\mB\subset\R^n$ is bounded if and only if there exist $\alpha$ and $\beta$ in
$\R^n$ with $\alpha\le x\le\beta$ for all $x\in\mB$.
We also provide the Banach spaces $C_n$ and $\WW_n$ with the induced order
\begin{equation}\label{2.orderC}
  x\le y\qquad\text{if and only if}\qquad x(s)\le y(s)\quad\text{for all}\;s\in[-r,0]\,.
\end{equation}
The relation $\ge$
is defined on $\R^n$, $C_n$ and $\WW_n$ in the obvious way.
\par
\begin{defi}
Let $\tau\colon\W\times C_n\to[0,r]$ be a continuous function.
We say that a function $F\colon\W\times \R^n\times\R^n\to\R^n$ satisfies the {\em
quasi-monotonicity condition for the delay function $\tau$\/}
if it satisfies the following property:
\begin{itemize}
 \item[\bf Q] \hypertarget{2.QM}
 If $x^1,x^2\in \WW_n$
 satisfy $x^1\le x^2$ and $x^1_i(0)=x^2_i(0)$ for an
 $i\in\{1,\ldots,n\}$,
 then $F_i(\w,x^1(0),x^1(-\tau(\w,x^1)))\le F_i(\w,x^2(0),x^2(-\tau(\w,x^2)))$
 for all $\w\in\W$.
\end{itemize}
\end{defi}
The next comparison result adapts Theorem 1.1 of Chapter 5 of Smith \cite{smit}
to SDDEs of the type \eqref{2.eq}, and can be proved in the same way:
the required result on continuous dependence of the solutions with respect to parameters
can be proved as point (v) of Theorem 3.2 of \cite{mano1}. (To this
regard, see also Remark 3.5 of \cite{mano1}).
In the statement of Theorem \ref{2.teormonotone},
the notation $u_F(t,\w,x)$ corresponds to the
function defined by~\eqref{2.defu}, and $u_G(t,\w,x)$ corresponds to the analogous one
given by the new family $\dot y(t)=G(\wt, y(t), y(t-\tau(\wt, y_{t})))$ for $t\ge 0$ and
$\w\in\W$, with the same delay $\tau$ as the initial one, and with the same assumptions
on $G$ as on $F$.
\begin{teor}\label{2.teormonotone}
Let $\tau\colon\W\times C_n\to[0,r]$ satisfy {\rm \hyperlink{2.H2}{H2}},
and let $F,G\colon\W\times\R^n\!\times\R^n\to\R^n$ satisfy {\rm \hyperlink{2.H1}{H1}}.
Suppose also that either $F$ or $G$ satisfies {\rm \hyperlink{2.QM}{Q}} for $\tau$,
and that $F(\w,z^1,z^2)\le G(\w,z^1,z^2)$ for all $(\w,z^1,z^2)\in
\W\times\R^n\times\R^n$.
If $x$ and $\wit x$ in $\WW_n$ satisfy $x\le \wit x$, then
$u_F(t,\w,x)\le u_G(t,\w,\wit x)$ for all $t\ge 0$ in their common interval
of definition.
\end{teor}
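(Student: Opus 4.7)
\smallskip

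The plan is to follow the classical Kamke--Nagumo strategy, adapting it to the state-dependent delay setting: perturb one of the nonlinearities to obtain a strict inequality, prove the corresponding strict comparison for the perturbed solutions by a contradiction argument at the first contact time, and finally pass to the limit using continuous dependence of the solutions on parameters.

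\smallskip

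\textbf{Step 1 (Perturbation).} For each $\ep>0$, define $G^\ep\colon\W\times\R^n\times\R^n\to\R^n$ by $G^\ep(\w,z^1,z^2):=G(\w,z^1,z^2)+\ep\,(1,\dots,1)^T$. Then $G^\ep$ still satisfies \hyperlink{2.H1}{H1} and, if $G$ satisfies \hyperlink{2.QM}{Q} for $\tau$, so does $G^\ep$; moreover $F(\w,z^1,z^2)<G^\ep(\w,z^1,z^2)$ componentwise for every $(\w,z^1,z^2)$. By Theorem~\ref{2.flujocontinuo} the corresponding solutions $y_{G^\ep}(t,\w,\wit x)$ exist on a maximal interval $[0,\beta^\ep_{\w,\wit x})$, and I claim that in the common interval of definition
\begin{equation*}
u_F(t,\w,x)\le u_{G^\ep}(t,\w,\wit x)\,.
\end{equation*}

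\textbf{Step 2 (Strict comparison via first-contact time).} Suppose the claim fails and set
\[
t_0:=\inf\{t\ge 0\,|\;\exists\,i\in\{1,\dots,n\}\text{ with }y_{F,i}(t,\w,x)>y_{G^\ep,i}(t,\w,\wit x)\}\,,
\]
assumed finite in the common interval. By continuity of $y_F$ and $y_{G^\ep}$ on $[-r,\,\cdot\,)$ together with $x\le\wit x$, we have $y_F(t)\le y_{G^\ep}(t)$ for all $t\in[-r,t_0]$ and $y_{F,i}(t_0)=y_{G^\ep,i}(t_0)$ for some index~$i$. In particular, as elements of $\WW_n$, $u_F(t_0,\w,x)\le u_{G^\ep}(t_0,\w,\wit x)$ and they coincide in the $i$-th component at $s=0$. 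Applying \hyperlink{2.QM}{Q} (to whichever of $F$ or $G$ satisfies it) together with $F\le G$, and using the lateral derivative formula provided by Theorem~\ref{2.flujocontinuo}(i), I would obtain
\begin{equation*}
\dot y_{F,i}(t_0^+)\;=\;F_i(\w\pu t_0,\,\ldots)\;\le\; G_i(\w\pu t_0,\,\ldots)\;<\;G_i(\w\pu t_0,\,\ldots)+\ep\;=\;\dot y_{G^\ep,i}(t_0^+)\,,
\end{equation*}
where the middle inequality uses the quasi-monotonicity to compare the two delayed arguments $y_F(t_0-\tau(\w\pu t_0,u_F(t_0)))$ and $y_{G^\ep}(t_0-\tau(\w\pu t_0,u_{G^\ep}(t_0)))$. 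On the other hand, by definition of $t_0$ there exist $t_k\downarrow t_0$ with $y_{F,i}(t_k)>y_{G^\ep,i}(t_k)$, forcing $\dot y_{F,i}(t_0^+)\ge \dot y_{G^\ep,i}(t_0^+)$. The contradiction rules out the existence of such a $t_0$ (including the case $t_0=0$, thanks to the lateral derivative at $0^+$), establishing the claim.

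\smallskip

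\textbf{Step 3 (Passage to the limit).} The continuous dependence of solutions on parameters for families of SDDEs with initial data in $\WW_n$, proved as in Theorem 3.2(v) of \cite{mano1} (see also Remark 3.5 there), yields that $u_{G^\ep}(t,\w,\wit x)\to u_G(t,\w,\wit x)$ in $\WW_n$ as $\ep\downarrow 0$, uniformly on each compact subinterval of the common interval of definition of $u_F(\,\cdot\,,\w,x)$ and $u_G(\,\cdot\,,\w,\wit x)$. Since $\WW_n$ is an ordered Banach space with closed positive cone under the order \eqref{2.orderC}, passing to the limit in the inequality from Step~2 yields $u_F(t,\w,x)\le u_G(t,\w,\wit x)$ for every such $t\ge 0$.

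\smallskip

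\textbf{Main obstacle.} The delicate point is Step~2, where the state dependence of $\tau$ means that the delayed arguments of $F$ and $G^\ep$ at the contact time are evaluated at \emph{different} histories, namely $-\tau(\w\pu t_0,u_F(t_0))$ versus $-\tau(\w\pu t_0,u_{G^\ep}(t_0))$. This is precisely the situation for which condition \hyperlink{2.QM}{Q} is tailored: it allows us to compare $G_i$ (or $F_i$) evaluated along two different admissible histories whose $0$-sections agree in the $i$-th component. Ensuring that the appeal to~\hyperlink{2.QM}{Q} is correctly set up, and that the maximal intervals behave well under the perturbation so that Step~3's uniform convergence holds on a genuinely common subinterval, are the points requiring the most care.
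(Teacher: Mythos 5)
Your proposal is correct and follows essentially the same route as the paper, which proves the theorem by \lq\lq adapting Theorem 1.1 of Chapter 5 of Smith\rq\rq: the $\ep$-perturbation of $G$, the first-contact-time argument in which condition \hyperlink{2.QM}{Q} is invoked precisely to compare the two different state-dependent delayed histories, and the passage to the limit via the continuous dependence result of Theorem 3.2(v) of \cite{mano1} that the paper itself cites. The points you flag as delicate (the appeal to \hyperlink{2.QM}{Q} with $x^1=u_F(t_0,\w,x)$, $x^2=u_{G^\ep}(t_0,\w,\wit x)$, both in $\WW_n$ by Theorem~\ref{2.flujocontinuo}(ii), and the uniform convergence on compact subintervals of the common maximal interval) are exactly the ones the adaptation requires, and they go through as you describe.
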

An easy consequence follows (see Definition \ref{2.defmonotone} and
Remark~\ref{2.notapseudo}.1):
\begin{coro}\label{2.coromonotone}
Suppose that the family~\eqref{2.eq} satisfies conditions
{\rm \hyperlink{2.H1}{H1}} and {\rm \hyperlink{2.H2}{H2}}, and that
the function
$F$ satisfies {\rm \hyperlink{2.QM}{Q}} for the delay function $\tau$.
Then the local semiflow $(\W\times\WW_n,\zeta\,,\R^+)$ defined by~\eqref{2.defPie}
is monotone.
\end{coro}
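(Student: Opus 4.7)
The plan is to invoke Theorem \ref{2.teormonotone} with the special choice $G=F$, after which the conclusion essentially reads off from the definitions.

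More precisely, I would first fix $\w\in\W$ and two initial data $x_1,x_2\in\WW_n$ with $x_1\le x_2$ in the order of \eqref{2.orderC}. I then take the same $F$ on both sides of the comparison theorem, i.e.\ I set $G:=F$. The hypotheses of Theorem \ref{2.teormonotone} are trivially met: $\tau$ satisfies \hyperlink{2.H2}{H2} and $F$ satisfies \hyperlink{2.H1}{H1} by the hypotheses of the corollary; the assumed quasi-monotonicity \hyperlink{2.QM}{Q} of $F$ for $\tau$ covers the requirement that \emph{either} $F$ \emph{or} $G$ satisfy \hyperlink{2.QM}{Q}; and the pointwise inequality $F(\w,z^1,z^2)\le G(\w,z^1,z^2)$ holds with equality. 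Hence Theorem \ref{2.teormonotone} yields
\[
 u_F(t,\w,x_1)\le u_F(t,\w,x_2)
\]
for every $t\ge 0$ in the common interval of definition, which by \eqref{2.defu} and the pointwise character of the order \eqref{2.orderC} is exactly $u(t,\w,x_1)\le u(t,\w,x_2)$.

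Comparing with Definition \ref{2.defmonotone}, this is the very statement that the local semiflow $(\W\times\WW_n,\zeta\,,\R^+)$ defined in \eqref{2.defPie} is monotone, so the corollary follows. There is really no obstacle here beyond unwinding the definitions: the entire analytic content (existence of the comparison, continuous dependence on the right-hand side, and the use of Lipschitz initial data in $\WW_n$ to guarantee uniqueness needed for the comparison argument) has already been absorbed into Theorem \ref{2.teormonotone}, which is the tool doing all the work. The only point worth mentioning explicitly in the write-up is the identification between the inequality on solutions $u_F$ and the order on $\WW_n$, which is immediate because \eqref{2.orderC} is defined pointwise on $[-r,0]$.
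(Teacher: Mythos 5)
Your argument is correct and is exactly the route the paper intends: the corollary is stated as ``an easy consequence'' of Theorem \ref{2.teormonotone}, obtained by taking $G:=F$ so that all hypotheses of the comparison theorem hold trivially and the conclusion $u_F(t,\w,x_1)\le u_F(t,\w,x_2)$ is precisely the monotonicity of Definition \ref{2.defmonotone}. Nothing is missing.
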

%%%%%%%%%%%%%%%%%%%%%%%%%%%%%%%%%%%%%%%%%%%%%%%%%%%%%%%%%%%%%%%%%%%%%%
%%%%%%%%%%%%%%%%%%%%%%%%%%%%%%%%%%%%%%%%%%%%%%%%%%%%%%%%%%%%%%%%%%%%%%
%%%%%%%%%%%%%%%%%%%%%%%%%%%%%%%%%%%%%%%%%%%%%%%%%%%%%%%%%%%%%%%%%%%%%%
%%%%%%%%%%%%%%%%%%%%%%%%%%%%%%%%%%%%%%%%%%%%%%%%%%%%%%%%%%%%%%%%%%%%%%
\section{The long-term dynamics}\label{sec3}
We analyze in this section
the long-term dynamics of the solutions of a two-dimensional system of nonautonomous
SDDEs which models the so-called delayed cellular neural networks, namely
\begin{equation}\label{3.model0}
\!\!\!\!\left\{\!\!\begin{array}{l}
\dot{y_1}(t)\!=\!-a_1^0(t)\,y_1(t)\!+b_1^0(t)f(y_1(t))\!+c_1^0(t)\,g(y_2(t-\tau(y_1(t),
y_2(t))))\!+I_1^0(t)\,,\!\!\\[0.2cm]
\dot{y_2}(t)\!=\!-a_2^0(t)\,y_2(t)\!+b_2^0(t)f(y_1(t-\tau(y_1(t),
y_2(t))))\!+c_2^0(t)\,g(y_2(t))\!+I_2^0(t)\,.\!\!
\end{array}\right.
\end{equation}
This system describes the dynamics of two groups of neurons and the
relation of each group with itself, which we assume instantaneous,
and with the other, which we assume delayed with state-dependent delay:
the influence of the voltage of each neuron both in the synapse process and
in the signal transmission justifies the state-dependence of the delay in
the model.
The variables $y_1$ and $y_2$ describe an average value of the action potentials
of the neurons in each group. The coefficients $a_1^0$ and $a_2^0$
are the decaying terms,
the functions $b_1^0$ and $c_2^0$ are the synaptic coupling coefficients between different
neurons of a same group, and the functions $c_1^0$ and $b_2^0$ are the synaptic
coupling coefficients between neurons of different groups.
These coefficients take average values of the (positive or negative)
weights of the corresponding (excitatory or inhibitory) neuronal connections which,
due to the plasticity of the network, may vary
with respect to the time; and hence no assumption on their sign can be made.
The coefficients $I_1^0$ and $I_2^0$ denote nonautonomous external inputs, and
$f$ and $g$ are the bounded and increasing activation functions.
The delay is given by the function $\tau$ which, as mentioned before, depends
on the potentials (i.e. on the states) $y_1$ and $y_2$:
our model intends to be a more realistic approach
to a simple biological method than the classical ones,
described for instance in \cite{wu}.
\par
To state the precise conditions that we assume on this system, we recall
that a function $f\colon\R\to\R$ is {\em almost periodic\/} if
it is continuous and for any sequence $(s_n)$ of $\R$ there exist a
subsequence $(t_m)$ and a continuous function $f^*\colon\R\to\R$ such that
$(f_{t_m})$ converges to $f^*$ uniformly in $\R$, where $f_t(s):=f(t+s)$.
\begin{hipos}\label{3.hipos}
The coefficients $a_1^0$, $b_1^0$, $c_1^0$, $I_1^0$,  $a_2^0$, $b_2^0$,
$c_2^0$, and $I_2^0$ are almost periodic, with $a_1^0(t)\ge\delta$ and
$a_2^0(t)\ge\delta$ for all $t\in\R$ for a constant $\delta>0$.
In addition, the delay $\tau\colon\R^2\to[0, r]$ is a $C^1$ function,
and the {\em functions\/} $f$ and $g$ belong to $C^1(\R,[-1,1])$ and satisfy
$\dot{f},\,\dot g\colon\R\to[0,1]$.
\end{hipos}
%It is usual to take $f$ and $g$ as sigmoid functions.
%\par
The above is a representative simplified model of neuronal dynamics.
Our purpose is to provide a dynamical theory for nonautonomous SDDEs,
suitable to explain relevant features of the temporal evolution of
the neuronal activity. Higher dimensional systems and more general
expressions for the state-dependent delayed term can be considered
in the model: the corresponding theory may be developed in a similar way.
%%%%%%%%%%%%%%%%%%%%%%%%%%%%%%%%%%%%%%%%%%%%%%%%%%%%%%%%%%%%%%%%%%%%%%%%%%%%%%%%%%%%%
%%%%%%%%%%%%%%%%%%%%%%%%%%%%%%%%%%%%%%%%%%%%%%%%%%%%%%%%%%%%%%%%%%%%%%%%%%%%%%%%%%%%%
%%%%%%%%%%%%%%%%%%%%%%%%%%%%%%%%%%%%%%%%%%%%%%%%%%%%%%%%%%%%%%%%%%%%%%%%%%%%%%%%%%%%%
%%%%%%%%%%%%%%%%%%%%%%%%%%%%%%%%%%%%%%%%%%%%%%%%%%%%%%%%%%%%%%%%%%%%%%%%%%%%%%%%%%%%%
\subsection{The hull construction and the definition of the semiflow}\label{3.sechull}
We will include the system \eqref{3.model0} in a family of SDDEs such
that each one of its systems is given by the evaluation of a continuous
function along an orbit of a continuous flow
on a compact metric space. The reason to perform this procedure is that,
despite the nonautonomous character of our initial system, the solutions of
the whole family will allow us to define a semiflow and hence to apply techniques
from the topological dynamics. The conclusions are hence obtained for
all the systems of the family, and can be particularized {\em a posteriori\/}
for the initial system.
\par
The procedure to do this is the
classical Bebutov construction, which we explain now. We consider the function
$l_0\colon\R\rightarrow\R^{8}$ given by $l_0:=(a_1^0, b_1^0, c_1^0, I_1^0, a_2^0,
b_2^0, c_2^0, I_2^0)$. Let $\W$ be the {\em hull} of $l_0$, that is, the closure
in the compact-open topology of the set of almost periodic maps
$\{l_t\,|\,t\in\R\}$, with $l_t(s):=l_0(t+s)$ for $s\in\R$.
It is a classical result that $\W$ is a compact metric space,
and that the flow $\sigma$ defined on $\W$ by time-translation
(i.e., $\sigma\colon\R\times\W\rightarrow\W$, $(t,\w)\mapsto\wt$
with $\wt(s):=\w(t+s)$) is continuous: see e.g.~\cite{fink}.
The hypotheses made on the
coefficients of \eqref{3.model0} ensure that $l_0$ is an almost
periodic function, and thus $(\W,\sigma,\R)$
is a minimal almost periodic flow: see Chapter VI of~\cite{sell}.
We represent by $(a_1, b_1, c_1, I_1, a_2, b_2, c_2, I_2)$
the (continuous) zero-evaluation operator on $\W$, which maps each $\w\in\W$ to the
vector $\w(0)$ of $\R^8$. In this way, $a_1(\wt)=\w_1(t)$ 
(the fist component of $\w(t)\in\R^8$), and the same happens
with the remaining coefficients.
Once this is done, we can consider the family of SDDEs
\begin{equation}\label{3.model}
\left\{\!\!\begin{array}{l}
\dot{y_1}(t)=-a_1(\wt)\,y_1(t)\!+b_1(\wt)f(y_1(t))\\[.2cm]
\quad\;\qquad+\,c_1(\wt)\,g(y_2(t-\tau(y_1(t),
y_2(t))))\!+I_1(\wt)\,,\\[0.2cm]
\dot{y_2}(t)=-a_2(\wt)\,y_2(t)\!+b_2(\wt)f(y_1(t-\tau(y_1(t),
y_2(t))))\\[.2cm]
 \quad\;\qquad+\,c_2(\wt)\,g(y_2(t))\!+I_2(\wt)
\end{array}\right.
\end{equation}
for $\w\in\W$.
The initial system~\eqref{3.model0} belongs to this family: it agrees with
\eqref{3.model}$_{\w_0}$ for $\w_0:=l_0$. Note also that
$a_1(\w)\ge\delta$ and $a_2(\w)\ge\delta$ for all $\w\in\W$,
with $\delta$ provided by Hypotheses \ref{3.hipos}.
\par
It is easy to check that the family \eqref{3.model} satisfies conditions
{\rm \hyperlink{2.H1}{H1}} and {\rm \hyperlink{2.H2}{H2}} of Section~\ref{sec5}
in the two-dimensional case.
For each $(\w,x)\in\W\times\WW_2$,
we denote by $y(t,\w,x)$ the solution of the equation \eqref{3.model}$_\w$
with $y(s,\w,x)=x(s)$ for $s\in[-r,0]$,
which is defined on a maximal interval $[-r,\beta_{\w,x})$.
We also define $u(t,\w,x)(s):=y(t+s,\w,x)$ for $t\in[0,\beta_{\w,x})$ and
$s\in[-r,0]$, and consider the local semiflow
$(\W\times\WW_2,\zeta\,,\R^+)$, where, as in~\eqref{2.defPie},
\begin{equation}\label{3.skew}
 \zeta\colon\mU_2\subseteq\R^+\!\times\W\times\WW_2\to\W\times\WW_2\,,\quad
 (t,\w, x)\mapsto(\wt, u(t,\w,x))\,.
\end{equation}
It turns out that, in general, $\W$ is not a locally connected space:
see \cite{mese}. Hence, it cannot be identified with a differentiable manifold,
and this implies that $\mC^0_2$ cannot be provided with
the structure of a differentiable manifold, despite
the fact that the fiber of $\mC^0_2$ over each base point $\w$
is a continuously differentiable submanifold
of a Banach space varying with $\w$ (see Proposition 3.4 of \cite{walth}).
%%%%%%%%%%%%%%%%%%%%%%%%%%%%%%%%%%%%%%%%%%%%%%%%%%%%%%%%%%%%%%%%%%%%%%%%%%%%%%%%%%%%%
%%%%%%%%%%%%%%%%%%%%%%%%%%%%%%%%%%%%%%%%%%%%%%%%%%%%%%%%%%%%%%%%%%%%%%%%%%%%%%%%%%%%%
%%%%%%%%%%%%%%%%%%%%%%%%%%%%%%%%%%%%%%%%%%%%%%%%%%%%%%%%%%%%%%%%%%%%%%%%%%%%%%%%%%%%%
%%%%%%%%%%%%%%%%%%%%%%%%%%%%%%%%%%%%%%%%%%%%%%%%%%%%%%%%%%%%%%%%%%%%%%%%%%%%%%%%%%%%%
\subsection{Existence of the global attractor}
In the rest of this section, we work under Hypotheses \ref{3.hipos}, and
$(\W,\sigma,\R)$ and $(\W\times\WW_2,\zeta\,,\R^+)$ are
as above. Our first result establishes the existence of a global attractor,
which is in addition connected.
\begin{teor}\label{3.teoratt}
Suppose that Hypotheses~\ref{3.hipos} hold.
Then $(\W\times\WW_2,\zeta\,,\R^+)$ is a bounded dissipative semiflow, and it admits a
connected global attractor $\mA$.
\end{teor}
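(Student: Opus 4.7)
The plan is to carry out three steps: derive uniform a priori bounds yielding global existence and an absorbing set in the $\WW_2$-norm; upgrade these bounds, via the regularization properties of Theorem~\ref{2.flujocontinuo}, to asymptotic compactness of $\zeta\,$; and finally invoke the standard dissipative-plus-asymptotically-compact characterization of the global attractor, with connectedness coming from the connectedness of $\W$.

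First I would bound $|y_i(t,\w,x)|$ uniformly. Compactness of $\W$ and continuity give a common bound $M$ for $|b_i|,|c_i|,|I_i|$, $i=1,2$. Since $|f|,|g|\le 1$ and $a_i\ge\delta$, a direct scalar comparison with $\dot z=-\delta z+3M$ (or Theorem~\ref{2.teormonotone} against constant-coefficient envelopes that trivially satisfy~{\rm \hyperlink{2.QM}{Q}}) yields
\[
|y_i(t,\w,x)|\le \n{x}_{C_2}\,e^{-\delta t}+\frac{3M}{\delta}
\qquad\text{while }\;t\in[0,\beta_{\w,x})\,,
\]
so $\n{u(t,\w,x)}_{C_2}$ stays bounded and Theorem~\ref{2.flujocontinuo}(iii) forces $\beta_{\w,x}=\infty$; hence $\zeta\,$ is globally defined. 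Plugging this $C$-bound back into~\eqref{3.model} for $t\ge r$, one reads off a uniform bound on $|\dot y_i(t)|$, hence on $\n{\dot u(t,\w,x)}_{L^\infty}$. Therefore, for $R$ large enough, $\mB:=\{(\w,x)\in\W\times\WW_2\,|\,\n{x}_{\WW_2}\le R\}$ absorbs every bounded set.

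Next I would argue asymptotic compactness of $\zeta\,$ on $\mB$. Each single trajectory $\{(\wt,u(t,\w,x))\,|\,t\ge r\}$ is relatively compact by Theorem~\ref{2.flujocontinuo}(iii), and differentiating~\eqref{3.model}, legitimate for $t\ge r$ thanks to the $C^1$ regularity of $\tau,f,g$ and the uniform Lipschitz bound on $y$ obtained above, gives a \emph{uniform} modulus of continuity for $\dot y(t,\w,x)$. A fiberwise Arzel\`a--Ascoli argument then makes $\bigcup_{t\ge r}\zeta_{\,t}(\mB)$ relatively compact in $\W\times\WW_2$. The classical criterion (see, e.g.,~\cite{chlr}) now gives the global attractor
\[
\mA:=\bigcap_{s\ge 0}\,\overline{\bigcup_{t\ge s}\zeta_{\,t}(\mB)}\,,
\]
its $\zeta\,$-invariance following from the fiber continuity in Theorem~\ref{2.flujocontinuo}(vii), and its uniqueness and attracting properties from Remark~\ref{2.notas}.2.

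For connectedness, minimality of $(\W,\sigma,\R)$ implies that every $\sigma$-orbit is dense; each orbit is the continuous image of $\R$, hence connected, so $\W$ is the closure of a connected set and is itself connected. Thus $\mB$ is connected. For each $t\ge r$, the map $(s,\w,x)\mapsto\zeta_{\,s}(\w,x)$ is continuous on $[t,\infty)\times\mB$ by Theorem~\ref{2.flujocontinuo}(vi), so $C_t:=\overline{\bigcup_{s\ge t}\zeta_{\,s}(\mB)}$ is the closure of a continuous image of a connected set, hence connected; the attractor $\mA=\bigcap_{t\ge r} C_t$ is then a nested intersection of compact connected sets, and therefore connected. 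The main obstacle I anticipate is precisely the asymptotic compactness in $\WW_2$: bounded sets in $\WW_2$ are only relatively compact in $C_2$, so to promote compactness to $\WW_2$ one must propagate the a priori bounds once more through the equation to extract a uniform modulus of continuity for $\dot y$, and this is the place where the weak regularity at $t=0$ caused by state-dependence forces the clean separation $t<r$ versus $t\ge r$ that Theorem~\ref{2.flujocontinuo}(iii),(vi),(vii) is tailored to.
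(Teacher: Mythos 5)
Your overall route is the same as the paper's: comparison with linear scalar equations built from the decay terms $-a_i(\wt)\,y_i$ plus a constant bound on the remaining terms, yielding uniform $C$-bounds and global existence via Theorem~\ref{2.flujocontinuo}(iii); then a bound on $\dot y$ read off from the equation to produce a bounded absorbing set in $\WW_2$; then compactness of the closure of its forward image and the omega-limit construction of $\mA$, with the (non-global) continuity properties of Theorem~\ref{2.flujocontinuo}(vi)--(vii) doing the work that continuity of $\zeta$ would normally do. The one place where you genuinely diverge is connectedness: the paper projects $\mA$ onto the fiber, passes to the closed convex hull, and follows Lemma 2.4.1 of \cite{hale4}, whereas you observe that $C_t:=\overline{\bigcup_{s\ge t}\zeta_{\,s}(\mB)}$ is connected as the closure of a continuous image (Theorem~\ref{2.flujocontinuo}(vi), applicable since $t\ge r$) of the connected set $[t,\infty)\times\mB$, and that $\mA$ is a nested intersection of compact connected sets. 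This is a legitimate and rather more self-contained alternative; it does rely on $C_t$ being compact for large $t$, which is precisely your asymptotic-compactness step, and on the connectedness of the minimal base $\W$, which both proofs use.

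The one step that does not survive as written is ``differentiating~\eqref{3.model}'' to control the modulus of continuity of $\dot y$. The coefficients $a_i,b_i,c_i,I_i$ evaluated along $\wt$ are only almost periodic, i.e.\ continuous in $t$ with no differentiability assumed, so the right-hand side of~\eqref{3.model} cannot be differentiated in $t$. The conclusion you need is nevertheless reachable without differentiation: for $t\ge r$ one has $\dot y(t)=F(\wt,y(t),y(t-\tau(\wt,y_t)))$; on the absorbing set the arguments range over a compact subset of $\W\times\R^2\times\R^2$, the maps $t\mapsto\wt$ and $t\mapsto y(t)$ are equicontinuous uniformly in $(\w,x)$ (the first by uniform continuity of $\sigma$ on $[0,1]\times\W$, the second by the Lipschitz bound), the segments $y_t$ therefore form a precompact subset of $C_2$ on which $\tau$ is uniformly continuous, and composing gives the uniform modulus of continuity of $\dot y$ directly. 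This is in substance the argument the paper delegates to Theorem 3.3(ix) of \cite{mano1}. With that repair, your proof is correct.
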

\begin{proof}
Let us rewrite the family \eqref{3.model} as
$\dot y_i(t)=F_i(\wt,y(t),y(t-\tau(y_t))$ for $i=1,2$, and look for a constant
$m$ such that, for all $(\w,z_1,z_2)\in\W\times\R^2\!\times\R^2$ and $i=1,2$,
\begin{equation}\label{3.cot}
 -a_i(\w)\,k_i-m\le F_i(\w,z_1,z_2)\le -a_i(\w)\,k_i+m\,.
\end{equation}
We consider two auxiliary families of (uncoupled) systems of linear ODEs,
\begin{equation}\label{3.maymin}
\left\{
\begin{array}{l}
\dot{y}_1(t)=-a_1(\wt)\,y_1(t)+m\,,\\[.1cm]
\dot{y}_2(t)=-a_2(\wt)\,y_2(t)+m\end{array}\right.
\quad\text{and}\quad
\left\{
\begin{array}{l}
\dot{y}_1(t)=-a_1(\wt)\,y_1(t)-m\,,\\[.1cm]
\dot{y}_2(t)=-a_2(\wt)\,y_2(t)-m\end{array}\right.
\end{equation}
for $\w\in\W$, and note that they satisfy property \hyperlink{Q}{Q}.
Let us represent by
$y^+(t,\w,\alpha)=\lsm y_1^+(t,\w,\alpha)\\ y_2^+(t,\w,\alpha)\rsm$ and
$y^-(t,\w,\alpha)=\lsm y_1^-(t,\w,\alpha)\\ y_2^-(t,\w,\alpha)\rsm$ the
solutions of the left and right systems of \eqref{3.maymin}$_\w$
with $y^+(0,\w,\alpha)=y^-(0,\w,\alpha)=\alpha\in\R^2$.
It follows from \eqref{3.cot} that the map $F=\lsm F_1\\F_2\rsm\colon\W\times\R^2\times\R^2\to\R^2$ giving
rise to \eqref{3.model} can be compared with those of the two families
in \eqref{3.maymin}
in the terms of Theorem~\ref{2.teormonotone}, which consequently guarantees that
\begin{equation}\label{3.compa}
 y^-(t,\w,x(0))\le y(t,\w,x)\le y^+(t,\w,x(0))
\end{equation}
for $(\w,x)\in\W\times\WW_2$ and $t\in[0,\beta_{\w,x})$.
These inequalities and the global existence of
$y^\pm(t,\w,x(0))$ combined with Theorem \ref{2.flujocontinuo}(iii)
guarantee that $\beta_{\w,x}=\infty$ for all $(\w,x)\in\mC^0_2$:
the semiflow $(\W\times\WW_2,\zeta\,,\R^+)$
given by \eqref{3.skew} is globally defined.
\par
Using the fact that $a_i(\w)\ge\delta>0$ for all $\w\in\W$, we can get a
$k_*>0$ such that
\begin{equation}\label{3.cot-a}
 -a_i(\w)\,k+m<-1 \quad \text{for all $\,\w\in\W\,$ and $\,k\ge k_*$}\,,
\end{equation}
so that $-a_i(\w)\,k-m>1$ for all $\w\in\W$ and $k\le-k_*$.
The set $\mK:=\big\{(\w,z_1,z_2)\in\W\times\R^2\times\R^2\,|\;
\lsm -k_*\\-k_*\rsm\le z_i\le\lsm k_*\\k_*\rsm
\;\text{for}\;i=1,2\big\}$ is compact, so that
there exists $d_*$ such that $-d_*\le F_i(\w,z_1,z_2)\le d_*$
for all $(\w,z_1,z_2)\in\mK$ and $i=1,2$.
We define
\[
\begin{split}
 \mS:=\big\{(\w,x)\in\W\times\WW_2\,\big|&\;\lsm -k_*\\-k_*\rsm\le x(s)
 \le \lsm k_*\\k_*\rsm\quad\text{for all $s\in[-r,0]$}\;\text{ and }\\
 &\;\lsm -d_*\\-d_*\rsm \le \dot{x}(s)\le\lsm d_*\\d_*\rsm
 \quad\text{for a.a.~$s\in[-r,0]$}\big\}\,.
\end{split}
\]
The definition of $\mS$ and the monotonicity of the Euclidean norm
on $\R^2$ ensure that $\mS$ is bounded and closed. We will check that it is
$\zeta\,$-absorbing. It is easy to prove
that for any $\alpha\in\R$,
$-k_*\le y_i^-(t,\w,\lsm-\alpha\\-\alpha\rsm)$ and
$y_i^+(t,\w,\lsm\alpha\\\alpha\rsm)\le k_*$
for all $t\ge t_{\alpha}:=\max(0,\alpha-k_*)$,
$\w\in\W$ and $i=1,2\,$:
just note that the inequalities persist to the right of any time $t$ at which
they are satisfied; and if $\alpha>k^*$ use the bounds for the derivatives
provided by \eqref{3.cot-a}.
\par
Let $\mB\subset\W\times\WW_2$ be an arbitrary bounded set, and let us
choose $\alpha_*=\alpha_*(\mB)\in\R$
such that $\lsm-\alpha_*\\-\alpha_*\rsm\le x(0)\le\lsm
\alpha_*\\\alpha_*\rsm $ for all $(\w,x)\in \mB$.
It is a well-known result that the
uncoupled systems \eqref{3.maymin} define monotone global
flows on $\W\times\R^2$ (see Definition~\ref{2.defmonotone}
and Chapter 3 of \cite{smit}).
These facts combined with \eqref{3.compa} yield
\begin{equation}\label{3.compa2}
y^-(t,\w,\lsm-\alpha_*\\-\alpha_*\rsm)\le y^-(t,\w,x(0))\le
 y(t,\w,x)\le y^+(t,\w,x(0))\le y^+(t,\w,\lsm\alpha_*\\\alpha_*\rsm)
\end{equation}
for all $(\w,x)\in\mB$ and $t\ge 0$.
We define $t_{\mB}:=t_{\alpha_*}\ge 0$ and deduce
from the previous paragraph and \eqref{3.compa2} that
$-k_*\le y_i(t,\w,x)\le k_*$ for all $(\w,x)\in\mB$
whenever $t\ge t_\mB$ and for $i=1,2$. This property and the definition of
$d_*$ ensure that $-d_*\le \dot{y}_i(t,\w,x)\le d_*$
for all $(\w,x)\in\mB$ whenever $t\ge t_{\mB}+r$ and $i=1,2$.
That is, $(\wt,u(t,\w,x))\in\mS$ for all
$(\w,x)\in\mB$ and $t\ge t_{\mB}+2r$;
i.e., $\mS$ is $\zeta\,$-absorbing.
In particular, the semiflow $(\W\times\WW_2,\zeta\,,\R^+)$ is bounded dissipative.
\par
Reviewing the argument, we observe that $t_{k_*}=0$,
so that $t_{\mS}=0$. It is easy to deduce that
$\mS$ is positively $\zeta\,$-invariant.
Therefore, $\bigcup_{t\ge 0}\zeta_{\,t}(\mS)=\mS$, so that this union
is bounded. Clearly $\zeta_{\,2r}(\mS)$ is absorbing under $\zeta\,$
and positively $\zeta\,$-invariant (as $\mS$).
Therefore, the closed set
$\mM:={\rm closure\,}_{\W\times\WW_2}\zeta_{\,2r}(\mS)$
is absorbing under $\zeta\,$ and positively
$\zeta\,$-invariant. (Here we make use of Theorem~\ref{2.flujocontinuo}(vii).)
These properties allow us to repeat the argument of the proof of
Theorem~\ref{2.flujocontinuo}(iii) (see Theorem 3.3(ix)
of~\cite{mano1}) in order to check that $\mM$ is compact.
\par
We will now recall how to obtain the global attractor from $\mM$, which
is a standard procedure (despite the lack of global continuity of $\zeta\,$).
Let us define $\mA=\bigcap_{\,t\ge 0}\zeta_{\,t}(\mM)$, which is clearly a
nonempty compact set, and it is positively $\zeta\,$-invariant (since $\mM$ is).
It is easy to check that $(\w,x)$ belongs to $\mA$ if and only if there exist
sequences $((\w_n,x_n))$ in $\mM$ and $(t_n)\uparrow\infty$ such that
$(\w,x)=\lim_{n\to\infty}\zeta(t_n,\w_n,x_n)$, and to deduce that $\mA\subseteq\zeta_{\,t}(\mA)$
for all $t\ge 0$. That is, $\mA$ is $\zeta\,$-invariant. We will now check that
$\mA$ attracts any bounded set $\mB$. Assume for contradiction that there exists
$\ep>0$, $(t_n)\uparrow\infty$, and a sequence $((\w_n,x_n))$ in $\mB$ such that
$\ep\le \dist(\zeta(t_n,\w_n,x_n),\mA)$, where $\dist$ is defined by \eqref{2.dist}
(for a singleton $\mC_1$, and $\mC_2=\mA$).
Since $\zeta(t_n,\w_n,x_n)$ belongs to the (absorbing) set $\mM$
for large enough $n$, there exist subsequences
$(t_m)$ and $((\w_m,x_m))$ and a point $(\w,x)$ with
$(\w,x)=\lim_{m\to\infty}\zeta(t_m,\w_m,x_m)$.
But then $(\w,x)\in\mA$ and $\dist((\w,x),\mA)\ge\ep$, which is impossible.
All these properties show that $\mA$ is a global attractor.
\par
It remains to prove that $\mA$ is connected. Note that it contains a minimal set, and
hence it projects over the whole (minimal) base $\W$, which is connected.
Let us call $\mA^2:=\{x\in\WW_2\,|\;\text{there exists $\w\in\W$ with
$(\w,x)\in\mA$}\}$, which is a compact set, and let
$\widehat \mA^2$ the closed convex hull of $\mA^2$. Then the set
$\W\times \widehat \mA^2$ is compact and connected, and
$\mA\subseteq\widehat\mA^2$.
From this point
we can follow the ideas of Lemma 2.4.1 of \cite{hale4} in order to
prove the connected character of $\mA$. This completes the proof.
\end{proof}
%\begin{nota}
%We point out that one of the purposes of the previous theorem is to
%determine an area \lq\lq as small as possible" of $\W\times\WW_2$ containing
%the attractor. This is the reason for which we do not limit ourselves to
%work with the sets $\mB_1$, $\mM_1$ and $\mO(\mM_1)$, which would
%suffice to prove the existence of the attractor.
%\end{nota}
%%%%%%%%%%%%%%%%%%%%%%%%%%%%%%%%%%%%%%%%%%%%%%%%%%%%%%%%%%%%%%%%%%%%%%%%%%%%%%%%%%%%%
%%%%%%%%%%%%%%%%%%%%%%%%%%%%%%%%%%%%%%%%%%%%%%%%%%%%%%%%%%%%%%%%%%%%%%%%%%%%%%%%%%%%%
%%%%%%%%%%%%%%%%%%%%%%%%%%%%%%%%%%%%%%%%%%%%%%%%%%%%%%%%%%%%%%%%%%%%%%%%%%%%%%%%%%%%%
%%%%%%%%%%%%%%%%%%%%%%%%%%%%%%%%%%%%%%%%%%%%%%%%%%%%%%%%%%%%%%%%%%%%%%%%%%%%%%%%%%%%%
\subsection{Global attractor and pullback attractors}
It is well-know that the definition of the so-called {\em process\/}
associated to a nonautonomous differential equation
provides an approach to analyze its long term dynamics which is different from that of
constructing the hull and the corresponding skew-product semiflow.
Caraballo {\em et al} \cite{calo} combine both dynamical formalisms in the
case of a nonautonomous ODE in order to describe the
properties of the {\em pullback attractor\/}
of the process from the structure of the global attractor
for the skew-product semiflow. In what follows, we make a
similar analysis for the case of our nonautonomous SDDEs:
from those properties of the semiflow $(\W\times\WW_2,\zeta\,,\R^+)$ which
imply the existence of the global attractor $\mA$,
we deduce the existence of the pullback attractor for the initial process
(and for all the processes associated to each one of the equations of
the family), and determine its shape in terms of that of $\mA$.
\par
In what follows we assume that Hypotheses~\ref{3.hipos} hold.
Let us fix, for the moment being, $\w\in\W$, and define
\[
 S_\w(t,s)\colon\WW_2\to\WW_2\,,\;x\mapsto S_\w(t,s)\,x:=u(t-s,\ws,x)
\]
for $t\ge s$. The cocycle equality
$u(t+s,\w,x)=u(t,\ws,u(s,\w,x))$ (which follows from the property
\hyperlink{(2)}{(f2)} for $\zeta\,$),
the continuity of the base flow $(\W,\sigma,\R)$,
and Theorem~\ref{2.flujocontinuo}(vi) and (vii) ensure that:
$S_\w(t,t)=\text{Id}_{\WW_2}$ for all $t\ge 0$;
$S_\w(t,s)=S_\w(t,l)\circ S_\w(l,s)$ whenever $t\ge l\ge s$; if
\[
 \mF_\w:=\{(t,s,x)\in\R\times\R\times\WW_2\,|\;t-s\ge r\}\,,
\]
then $S_\w\colon\mF_\w\to\WW_2,\;(t,s,x)\mapsto S_\w(t,s)\,x$ is continuous
(that is, the map $S_\w$ defines a {\em process}, which can be not continuous
due to the possible lack of continuity for $0\le t-s< r$); and
\begin{equation}\label{3.con}
S_\w(t,s)\colon\WW_2\to\WW_2 \quad\text{is continuous if $s$ and $t$ are fixed}\,.
\end{equation}
A continuous processes associated to the continuous
semiflow $(\mC^0_2,\zeta^0,\R^+)$ given by \eqref{2.defPi0} instead of
$(\W\times\WW_2,\zeta\,,\R^+)$ is defined in Section 3 of
\cite{walth}. But in that case the domain and codomain of
$S_\w(t,s)$, given by sections of the set $\mC^0_2$,
vary with $\w,t$ and $s$. Note that, for our process,
the lack of continuity while $t-s\le r$ is not relevant, since
the long-term analysis is done for a fixed $t$ and $s\to-\infty$.
\par
The distance between two subsets of $\WW_n$ is defined by the analogue of \eqref{2.dist}.
\begin{defi}
A time-dependent family $\{\mK_t\,|\;t\in\R\}$ of compact subsets of $\WW_2$
{\em pullback attracts bounded sets under $S_\w$\/} if $\lim_{s\to-\infty}
\dist(S_\w(t,s)\,\mB,\mK_t)=0$ for all $t\in\R$ for every bounded set $\mB$;
and it is {\em $S_\w$-invariant\/} if $S_\w(t,s)\,\mK_s=\mK_t$ whenever $t\ge s$.
An $S_\w$-invariant family $\{\mK_t\,|\;t\in\R\}$ of compact sets is a
{\em pullback attractor of $S_\w$} if it pullback attracts bounded sets under $S_\w$
and, in addition, it is the minimal $S_\w$-invariant family of
compact sets with this property (in the sense of set inclusion).
And given a bounded set $\mB\subset\WW_2$, its {\em pullback omega-limit set
in time\/} $t\in\R$ is
\[
 \mO_\w(\mB,t):=\{x\in\WW_2\,|\;x=\lim_{n\to\infty}S_\w(t,s_n)\,x_n
 \quad \text{for $(s_n)\downarrow-\infty$ and $(x_n)$ in $\mB$}\}\,.
\]
\end{defi}
Note that the minimality required in the definition of the pullback attractor
implies its uniqueness, in the case of existence.
\par
\begin{prop}\label{3.pull}
Suppose that Hypotheses~\ref{3.hipos} hold.
Let $\mB\subset\WW_2$ be a bounded set. Then, for each $\w\in\W$ and all
$t\in\R$, the set $\mO_\w(\mB,t)$ is nonempty and compact,
and $\lim_{s\to-\infty}\dist(S_\w(t,s)\,\mB,\mO_\w(\mB,t))=0$. In addition,
$S_\w(t,s)\,\mO_\w(\mB,s)=\mO_\w(\mB,t)$ for each $\w\in\W$ whenever $t\ge s$.
\end{prop}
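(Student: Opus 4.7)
The plan is to deduce everything from the global attractor $\mA$ of Theorem~\ref{3.teoratt} via the identity $S_\w(t,s)\,x=u(t-s,\ws,x)$ built into the definition of the process. Since $\W$ is compact and $\mB$ is bounded, the product $\W\times\mB$ is a bounded subset of $\W\times\WW_2$, so by Definition~\ref{2.defatt}
\[
 \lim_{T\to\infty}\dist\big(\zeta_{\,T}(\W\times\mB),\mA\big)=0\,.
\]
Fix $\w\in\W$ and $t\in\R$. For any $(s_n)\downarrow-\infty$ and any $(x_n)\subset\mB$ one has $\zeta_{\,t-s_n}(\w\pu s_n,x_n)=(\wt,S_\w(t,s_n)\,x_n)$ with $t-s_n\to\infty$, so these points approach $\mA$. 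Compactness of $\mA$ then provides a subsequence convergent to some $(\wt,y^*)\in\mA$; equivalently, every such sequence $(S_\w(t,s_n)\,x_n)$ has a subsequence convergent in $\WW_2$ to a point of the compact fibre $\mA_{\wt}:=\{y\in\WW_2\,|\;(\wt,y)\in\mA\}$. This is the key reduction to be used repeatedly.

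Applied to the constant sequence $x_n\equiv x_0\in\mB$, the reduction shows that $\mO_\w(\mB,t)$ is nonempty and satisfies $\mO_\w(\mB,t)\subseteq\mA_{\wt}$; a standard diagonal argument on the defining double limit shows that $\mO_\w(\mB,t)$ is closed, hence compact. For the pullback attraction, I would argue by contradiction: if for some $\ep>0$ there were $(s_n)\downarrow-\infty$ and $(x_n)\subset\mB$ with the distance from $S_\w(t,s_n)\,x_n$ to $\mO_\w(\mB,t)$ bounded below by $\ep$, the reduction would yield a subsequence convergent to a point of $\mO_\w(\mB,t)$, contradicting the lower bound.

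For the invariance $S_\w(t,s)\,\mO_\w(\mB,s)=\mO_\w(\mB,t)$ with $t\ge s$, both inclusions rest on the cocycle identity $S_\w(t,s_n)=S_\w(t,s)\circ S_\w(s,s_n)$ (valid once $s_n\le s$) combined with the fixed-endpoint continuity~\eqref{3.con} of $S_\w(t,s)\colon\WW_2\to\WW_2$. If $x=\lim S_\w(s,s_n)\,x_n\in\mO_\w(\mB,s)$, then $S_\w(t,s)\,x=\lim S_\w(t,s)\,S_\w(s,s_n)\,x_n=\lim S_\w(t,s_n)\,x_n\in\mO_\w(\mB,t)$. Conversely, given $y=\lim S_\w(t,s_n)\,x_n\in\mO_\w(\mB,t)$ with $s_n\le s$, the same compactness reduction applied at time $s$ extracts a subsequence with $S_\w(s,s_n)\,x_n\to z\in\mO_\w(\mB,s)$, and~\eqref{3.con} then forces $y=S_\w(t,s)\,z$. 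The main obstacle to watch is the lack of joint continuity of $\zeta$ (and hence of $S_\w$) on the strip $0\le t-s<r$: every limiting argument must be phrased using only the separately fixed-time continuity of $S_\w(t,s)\colon\WW_2\to\WW_2$ from~\eqref{3.con} together with compactness and attraction properties of $\mA$, and must not invoke joint continuity in the variables $(t,s,x)$.
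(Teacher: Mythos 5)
Your proof is correct, and in substance it follows the same route as the paper's: both rest on the existence of a compact subset of $\WW_2$ that eventually captures $S_\w(t,s)\,\mB$ as $t-s\to\infty$, combined with the fixed-endpoint continuity \eqref{3.con}, which is exactly what is needed to run the classical omega-limit argument for processes. The differences are ones of packaging. The paper first shows that $S_\w(t,s)\,\mB\subseteq\mM_\W$ for $t-s$ large, where $\mM_\W$ is the projection onto $\WW_2$ of the compact absorbing set $\mM$ built in the proof of Theorem~\ref{3.teoratt}, and then delegates the remaining work to Lemma~2.7 (and Lemma~2.4) of \cite{chlr}, adapted via \eqref{3.con}; you instead invoke the attraction property of the global attractor $\mA$ itself and write out explicitly the subsequence extraction, the closedness of $\mO_\w(\mB,t)$, the attraction by contradiction, and the two inclusions of the invariance identity. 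Both are valid and there is no circularity, since $\mA$ is already available from Theorem~\ref{3.teoratt}; your version is self-contained and yields the extra information $\mO_\w(\mB,t)\subseteq\mA_{\wt}$ for free, while the paper's version makes the uniformity in $\w$ and $t$ (via the fixed compact set $\mM_\W$) slightly more visible. You also correctly isolate the one point requiring care --- on the strip $0\le t-s<r$ only the separate continuity of $S_\w(t,s)$ for fixed $t,s$ is available --- which is precisely the adaptation of the cited lemmas that the paper alludes to.
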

\begin{proof}
Let us fix any $n\in\N$ and take the sets $\mS$ and $\mM$
defined in the proof of Theorem~\ref{3.teoratt},
so that $\mM=\text{closure}_{\W\times\WW_2}\zeta_{\,2r}(\mS)$.
Let us define $\mM_\W:=\{x\in\WW_2\,|\;
\text{there exists }\w\in\W \text{ with }(\w,x)\in\mM\}$,
which is a compact subset of $\WW_2$.
By repeating the ideas of the proof of Theorem~\ref{3.teoratt}
we show that, if $\mB\subset\WW_2$ is bounded,
then there exists $t_\mB$ such that
$S_\w(t,s)\,\mB\subseteq\mS$ for $t-s\ge t_\mB$ and hence that
$S_\w(t,s)\,\mB\subseteq\mM_\W$ for $t-s\ge t_\mB+2r$, for all $\w\in\W$.
In particular, $O_\w(\mB,t)\subseteq\mM_\W$ for all $\w\in\W$ and all $t\in\R$.
In these conditions, we can repeat the proof of
Lemma 2.7 of \cite{chlr}, whose conclusions are those of
this proposition. Note that Lemma 2.7 of \cite{chlr}
relies on Lemma 2.4 of the same book, and that also this result
can be adapted to our setting, due to \eqref{3.con}.
\end{proof}
Recall that $\mA_\w:=\{x\in\WW_2\,|\;(\w,x)\in\mA\}$ for any $\w\in\W$.
\begin{teor}\label{3.teorpull}
Suppose that Hypotheses~\ref{3.hipos} hold, and
let $\mA$ be the global attractor provided by Theorem~\ref{3.teoratt}.
Then, for each $\w\in\W$ and all $t\in\R$,
\[
 \mA_{\wt}=\text{\rm closure}_{\WW_2}
  \left(\bigcup_{\mB\subset\WW_2,\,\mB\;\text{\rm bounded}}
 \mO_\w(\mB,t)\right).
\]
In addition, $\{\mA_{\wt}\,|\;t\in\R\}$ is an $S_\w$-invariant family of compact sets
and it is the pullback attractor of the process $S_\w$.
\end{teor}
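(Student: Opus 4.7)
The plan is to establish the characterization of $\mA_{\wt}$ by double inclusion, and then extract the pullback attractor properties from what is essentially the same argument. The key bridge between the global attractor $\mA$ and the processes $S_\w$ is that for any bounded $\mB \subset \WW_2$, the set $\W \times \mB$ is bounded in $\W \times \WW_2$ because $\W$ is compact; hence the attraction property of $\mA$ can be applied to base-shifted sequences $(\w\pu s_n, x_n)$ with $s_n \to -\infty$, $x_n \in \mB$.

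For the inclusion $\bigcup_\mB \mO_\w(\mB,t) \subseteq \mA_{\wt}$, I take $y \in \mO_\w(\mB,t)$, so $y = \lim_n u(t-s_n, \w\pu s_n, x_n)$ with $s_n \downarrow -\infty$ and $x_n \in \mB$. Applying $\zeta_{\,t-s_n}$ to $(\w\pu s_n, x_n) \in \W\times\mB$ produces $\zeta(t-s_n,\w\pu s_n,x_n) = (\wt, u(t-s_n,\w\pu s_n,x_n))$, whose first coordinate is already $\wt$ and whose distance to $\mA$ tends to $0$ by Definition~\ref{2.defatt}; picking $(\w^n,a_n)\in\mA$ realizing approximate distance and invoking compactness of $\mA$, a subsequence of $(\wt,u(t-s_n,\w\pu s_n,x_n))$ converges to a point of $\mA$ with first coordinate $\wt$, namely $(\wt,y)$. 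Hence $y \in \mA_{\wt}$, and since $\mA_{\wt}$ is closed, the closure of $\bigcup_\mB \mO_\w(\mB,t)$ is contained in $\mA_{\wt}$. For the reverse inclusion, I use that $\mA$ is $\zeta\,$-invariant: by Remark~\ref{2.notas}.3, every $(\wt,a)\in\mA$ admits a complete orbit $\theta\colon\R\to\mA$ with $\theta(0)=(\wt,a)$, whose base projection is $s \mapsto \w\pu(t+s)$. Writing $\theta(-n) = (\w\pu(t-n), x_n)$ with $x_n$ in the projected set $\mA_\W := \{x \in \WW_2 : \exists \w' \in \W, (\w',x) \in \mA\}$, which is compact and thus bounded, one has $u(n, \w\pu(t-n), x_n) = a$, i.e., $a = S_\w(t, t-n)\,x_n$, so $a \in \mO_\w(\mA_\W, t)$. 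This gives the identity.

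The three pullback-attractor properties follow with little extra work. The $S_\w$-invariance $S_\w(t,s)\,\mA_{\w\pu s} = \mA_{\wt}$ for $t \ge s$ is exactly the fiberwise form of the $\zeta\,$-invariance of $\mA$: $\subseteq$ comes from applying $\zeta_{\,t-s}$ to $(\w\pu s, x)\in\mA$, and $\supseteq$ from the fact that every $(\wt,y)\in\mA$ is the image under $\zeta_{\,t-s}$ of some point of $\mA$ with base coordinate $\w\pu s$ (such a preimage exists along the backward orbit provided by the $\zeta\,$-invariance). Pullback attraction of bounded sets by $\{\mA_{\wt}\}$ is proved by contradiction exactly as in the first inclusion above: if $\dist(S_\w(t,s_n)\,x_n, \mA_{\wt}) \ge \ep$ for some $s_n \downarrow -\infty$ and $x_n \in \mB$, the attraction property of $\mA$ applied to the bounded set $\W \times \mB$ and compactness of $\mA$ produce a subsequential limit in $\mA_{\wt}$, contradicting the $\ep$-separation. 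Finally, minimality: any $S_\w$-invariant family $\{\mK_t\}$ of compact sets pullback-attracting bounded subsets must satisfy $\mO_\w(\mB, t) \subseteq \mK_t$ for every bounded $\mB$ (since $\mK_t$ is closed and the defining limits land in every neighborhood of $\mK_t$), so taking the union over $\mB$ and passing to closure yields $\mA_{\wt} \subseteq \mK_t$.

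The principal technical point where care is needed is that the semiflow $\zeta\,$ is not globally continuous on $[0,r]\times\W\times\WW_2$; however this is irrelevant for the pullback analysis, since all relevant times $t - s_n$ tend to infinity, placing us well inside the continuity region $\wit\mU_2$ of Theorem~\ref{2.flujocontinuo}(vi). The only nonroutine ingredient is the use of compactness of $\W$ to upgrade a $\WW_2$-bounded set $\mB$ to a $(\W\times\WW_2)$-bounded set $\W\times\mB$, so that the attraction hypothesis for $\mA$ can be exploited against sequences whose base coordinates drift under the backward flow.
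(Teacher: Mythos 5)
Your proof is correct, and it is organized differently from the paper's. The paper defines $\mK_t$ as the closure of $\bigcup_\mB\mO_\w(\mB,t)$, establishes that $\{\mK_t\}$ is a compact, $S_\w$-invariant, pullback-attracting and minimal family by leaning on Proposition~\ref{3.pull} (itself an adaptation of Lemmas 2.4 and 2.7 of \cite{chlr}, which is where the continuity property \eqref{3.con} enters), and only at the very end identifies $\mK_t$ with $\mA_{\wt}$ by observing that both sets are characterized as the points admitting bounded complete orbits (Theorem 1.17 of \cite{chlr}). You instead prove the set identity head-on by double inclusion: the inclusion $\mO_\w(\mB,t)\subseteq\mA_{\wt}$ comes from applying the attraction of $\mA$ to the bounded set $\W\times\mB$ together with compactness of $\mA$ to force the subsequential limit to sit over the fiber of $\wt$ (this fiber-pinning step is exactly the point that needs the compactness argument you give, since the nearest point of $\mA$ need not lie over $\wt$); the reverse inclusion uses the bounded-complete-orbit structure of $\mA$ to exhibit every $a\in\mA_{\wt}$ as an element of $\mO_\w(\mA_\W,t)$ for the bounded projection $\mA_\W$. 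You then read off invariance fiberwise from $\zeta_{\,t-s}(\mA)=\mA$ (using that the base is a genuine flow, so the base coordinate of a preimage is forced to be $\w\pu s$), pullback attraction by the same contradiction/compactness device, and minimality exactly as the paper does. The trade-off: your route is more self-contained and avoids invoking Proposition~\ref{3.pull} and Theorem 1.17 of \cite{chlr} inside this proof (and, as you note, needs no continuity of $\zeta$ at all), while the paper's route factors the work through a reusable proposition and external results, at the cost of the final identification step being delegated to a citation. Both arguments ultimately rest on the same two facts about $\mA$: it attracts bounded sets, and it is the union of its bounded complete orbits.
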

\begin{proof}
We fix $\w\in\W$ and define
$ \mK_t:=\text{closure}_{\WW_2}
 \left(\bigcup_{\mB\subset\WW_2,\,\mB\;\text{\rm bounded}}
 \mO_\w(\mB,t)\right)
$.
Let $\mM_\W$ be defined as in the proof of Proposition \ref{3.pull}.
As said there, $\mO_\w(\mB,t)\subseteq\mM_\W$ for all $t\in\R$,
so that $\mK_t\subseteq\mM_\W$ and hence it is a compact set.
It follows from Proposition~\ref{3.pull}
that the family $\{\mK_t\,|\;t\in\R\}$
of compact sets pullback-attracts bounded sets.
In addition, $S_\w(t,s)\,\mK_s=\mK_t$ for $t\ge s$: this property follows
from Proposition~\ref{3.pull} and \eqref{3.con}.
Therefore, the family $\{\mK_t\,|\;t\in\R\}$ is $S_\w$-invariant.
\par
In addition, if $\{\mP_t\,|\;t\in\R\}$ is another $S_\w$-invariant family
of compact sets which attracts bounded sets, then
$\mO_\w(\mB,t)\subseteq\mP_t$ for every bounded set $\mB\subset\WW_2$.
This property follows easily from the definition of $\mO_\w(\mB,t)$
and a contradiction argument. Therefore, $\mK_t\subseteq\mP_t$. Consequently,
$\{\mK_t\,|\;t\in\R\}$ is the pullback attractor.
\par
Let us finally check that $\mK_t=\mA_{\wt}$ for all $t\in\R$.
According to Remarks~\ref{2.notas}.3 and \ref{2.notapseudo}.1,
a point $x\in\WW_2$ belongs to $\mA_\w$
if and only if $(\w,x)$ admits a bounded complete orbit.
Since $\mK_t\subseteq\mM_\W$ for all $t\in\R$,
Theorem 1.17 of \cite{chlr} (whose proof can be repeated without changes for
our process) proves that this is the same
condition required on the point $x\in\WW_2$ in order to belong to $\mK_0$.
Therefore, $\mK_0=\mA_\w$. The same argument works for every $t\in\R$,
and this completes the proof.
\end{proof}
%%%%%%%%%%%%%%%%%%%%%%%%%%%%%%%%%%%%%%%%%%%%%%%%%%%%%%%%%%%%%%%
%%%%%%%%%%%%%%%%%%%%%%%%%%%%%%%%%%%%%%%%%%%%%%%%%%%%%%%%%%%%%%%
%%%%%%%%%%%%%%%%%%%%%%%%%%%%%%%%%%%%%%%%%%%%%%%%%%%%%%%%%%%%%%%
%%%%%%%%%%%%%%%%%%%%%%%%%%%%%%%%%%%%%%%%%%%%%%%%%%%%%%%%%%%%%%%
\subsection{Properties of the global attractor}
We continue working under Hypotheses~\ref{3.hipos}, so that
Theorem~\ref{3.teoratt} ensures the existence of the global attractor $\mA$
for the semiflow $(\W\times\WW_2,\R^+,\zeta)$.
The restriction of $\zeta\,$ to the global attractor $\mA$,
which is continuous (see Theorem \ref{2.flujocontinuo}(viii)),
determines the long-term
behaviour of the bounded semiorbits of $(\W\times\WW_2,\zeta\,,\R^+)$.
The following goal is to describe conditions which provide $\mA$ with the simplest
possible structure: that of a copy of the base,
which means that $\mA$ agrees with the graph
of a continuous map $a\colon\W\to\WW_2$. This goal
is achieved in Theorem \ref{3.teorAcopy},
whose hypotheses consist of Hypotheses~\ref{3.hipos} together
with the exponential stability of all the $\zeta\,$-minimal sets.
\begin{nota}\label{2.notaLyap}
We recall two facts: the exponential stability of a $\zeta\,$-minimal set $\mM$
is characterized in Section 5 of~\cite{mano2}
by the negative character of its upper Lyapunov exponent
with respect to $\zeta\,$ (see Definition~\ref{3.defly} and Theorem~\ref{3.teor52} below);
and all the $\zeta\,$-minimal sets are contained in the global attractor $\mA$
(see Remarks~\ref{2.notapseudo}.1 and \ref{2.notas}.2). That is,
the hypothesis of exponential stability of all the $\zeta\,$-minimal sets
holds if the upper Lyapunov exponent of $\mA$ is negative, which
a priori seems to be a more restrictive property. But
Theorem \ref{3.teorAcopy}(iii) shows that
both conditions are equivalent.
\end{nota}
\begin{teor}\label{3.teorAcopy}
Suppose that Hypotheses~\ref{3.hipos} hold, and that all the $\zeta\,$-minimal
subsets of $\W\times\WW_2$ are exponentially stable.
Let $\mA$ be the global attractor provided by Theorem~\ref{3.teoratt}. Then,
\begin{itemize}
\item[(i)] $\mA$ is $\zeta\,$-minimal and the continuous
semiflow $(\mA,\zeta\,,\R^+)$ admits a flow extension.
In particular, $\mA$ is the unique $\zeta\,$-minimal subset of $\W\times\WW_2$.
\item[(ii)] $\mA$ is a copy of the base; i.e., there exists a continuous function
  $a\colon\W\to\WW_2$ such that $\mA=\{(\w,a(\w))\,|\;\w\in\W\}$.
\item[(iii)] There exist a constant
  $\beta>0$ and, for any $c>0$, a constant $k_c\ge 1$ such that, if
  $x\in\WW_2$ satisfies $\n{x}_{\WW_2}\le c$, then
  \[
  \qquad\n{u(t,\w,x)-a(\wt)}_{\WW_2}\le k_c\,e^{-\beta t}\n{x-a(\w)}_{\WW_2}
  \quad\text{for all $t\ge 0$}\,.
  \]
\end{itemize}
\end{teor}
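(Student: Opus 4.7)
\smallskip
\noindent\textbf{Proof plan.}
I handle (i) and (ii) jointly via three steps: (a) any $\zeta\,$-minimal set $\mM\subseteq\mA$ is a copy of the base; (b) such a minimal set is unique in $\mA$, via a connectedness argument on basins of attraction; (c) $\mA=\mM$. Part (iii) then follows by combining the local exponential stability of $\mA$ with the uniform absorption property delivered by Theorem~\ref{3.teoratt}. Throughout I tacitly use Remark~\ref{2.notas}.2 and Remark~\ref{2.notapseudo}.1 to place all $\zeta\,$-minimal sets inside $\mA$.

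\smallskip
\noindent\textbf{Steps (a) and (b).}
Pick any $\zeta\,$-minimal $\mM\subseteq\mA$, which exists by Zorn's lemma; the minimality of $(\W,\sigma,\R)$ guarantees that $\mM$ projects onto $\W$. By hypothesis $\mM$ is exponentially stable with constants $\delta_0,k,\alpha$, so any two points $\bar x_1,\bar x_2\in\mM_\w$ with $\n{\bar x_1-\bar x_2}_{\WW_2}<\delta_0$ satisfy $\n{u(t,\w,\bar x_1)-u(t,\w,\bar x_2)}_{\WW_2}\le k\,e^{-\alpha t}\n{\bar x_1-\bar x_2}_{\WW_2}$. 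Combining this contraction with the almost periodic recurrence of the base and the flow extension on $\mM$ (Theorem II.2.3 of \cite{shyi}; the required continuity and uniqueness of complete orbits in $\mM$ are granted by Theorem~\ref{2.flujocontinuo}(viii) and by exponential contraction), I get $\bar x_1=\bar x_2$; upper semicontinuity of $\w\mapsto\mM_\w$ then propagates the singleton property to the whole base, yielding a continuous $a\colon\W\to\WW_2$ with $\mM=\{(\w,a(\w))\,|\;\w\in\W\}$. For uniqueness, the basin $\mA_\mM\subseteq\mA$ of points attracted to $\mM$ is open in $\mA$: the condition ``$\zeta_{\,T}(\w_0,x_0)$ lies in the $\delta_0/2$-neighborhood of $\mM$ for some $T$'' is open in $\mA$ by continuity of $(\mA,\zeta\,,\R^+)$ (Theorem~\ref{2.flujocontinuo}(viii)), and once that neighborhood is entered exponential stability traps the orbit. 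Every forward semiorbit in $\mA$ is attracted to some minimal set, because any minimal subset of its $\omega$-limit set is approached within $\delta_0$ and then trapped. Thus $\mA$ is the disjoint union of the open basins of its minimal sets, and the connectedness of $\mA$ (Theorem~\ref{3.teoratt}) forces uniqueness of $\mM$.

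\smallskip
\noindent\textbf{Step (c) and part (iii).}
Given $(\w_0,x_0)\in\mA$, the $\zeta\,$-invariance of $\mA$ provides a complete orbit $t\mapsto(\w_0\pu t,x(t))\subseteq\mA$. Its $\alpha$-limit set is a nonempty compact $\zeta\,$-invariant subset of $\mA$ and hence contains the unique minimal $\mM$; pick $t_n\downarrow-\infty$ and $\w^*\in\W$ with $(\w_0\pu t_n,x(t_n))\to(\w^*,a(\w^*))$. Continuity of $a$ yields $\n{x(t_n)-a(\w_0\pu t_n)}_{\WW_2}\to 0$, so this is eventually smaller than $\delta_0$. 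Applying exponential stability on $[t_n,0]$ to the pair $x(t_n),a(\w_0\pu t_n)$ over the base point $\w_0\pu t_n$, using $u(-t_n,\w_0\pu t_n,a(\w_0\pu t_n))=a(\w_0)$ (invariance of $\mM$ together with its flow extension) and $u(-t_n,\w_0\pu t_n,x(t_n))=x_0$ (complete orbit), gives
\[
 \n{x_0-a(\w_0)}_{\WW_2}\le k\,e^{\alpha t_n}\,\n{x(t_n)-a(\w_0\pu t_n)}_{\WW_2}\xrightarrow[n\to\infty]{}0\,,
\]
so $x_0=a(\w_0)$. Hence $\mA=\mM$, proving (i) (minimality plus the continuous flow extension $\bar\sigma_t(\w,a(\w)):=(\w\pu t,a(\w\pu t))$) and (ii). For (iii), fix $c>0$: since $\mA$ attracts the bounded set $\{(\w,x)\,|\;\n{x}_{\WW_2}\le c\}$, there is $T_c$ with $\n{u(t,\w,x)-a(\w\pu t)}_{\WW_2}<\delta_0$ for all such $(\w,x)$ and $t\ge T_c$, so exponential stability gives $\n{u(t,\w,x)-a(\w\pu t)}_{\WW_2}\le k\delta_0\,e^{\alpha T_c}e^{-\alpha t}$ for $t\ge T_c$; the absorbing-set argument of Theorem~\ref{3.teoratt} provides a uniform bound $C_c$ on $[0,T_c]$. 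Splitting on whether $\n{x-a(\w)}_{\WW_2}<\delta_0$ (direct exponential stability, factor $k$) or $\ge\delta_0$ (absorb $\max(C_c,k\delta_0 e^{\alpha T_c})$ into a factor of $\n{x-a(\w)}_{\WW_2}/\delta_0$) yields (iii) with $\beta:=\alpha$ and suitable $k_c$.

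\smallskip
\noindent\textbf{Main obstacle.}
The delicate point is Step (a): proving the copy-of-base structure for $\mM$ in this SDDE framework, where the full semiflow $\zeta\,$ is only continuous on compact invariant sets (Theorem~\ref{2.flujocontinuo}(viii)) and where global backward uniqueness fails. The standard distality/almost automorphy arguments that yield copies of the base in the ODE/FDE setting have to be carried out on a flow extension of $\mM$, whose very existence hinges on the exponential contraction inside $\mM$; almost periodic recurrence of the base is then the tool that propagates the singleton-fiber conclusion to all of $\W$.
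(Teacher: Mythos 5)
Your steps (b), (c) and the proof of (iii) are sound and essentially parallel the paper's argument (the paper derives uniqueness of the minimal set from Theorem 5.9 of \cite{mano2} rather than from your basin-decomposition argument, and invokes Theorem 3.6 of \cite{mano1} for the bound on $[0,T_c]$, but these are workable variants). The problem is Step (a), which you yourself flag as the delicate point: the argument you give there does not establish that $\mM$ is a copy of the base, and the claim it implicitly relies on is false in general.

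Concretely, exponential stability only controls pairs at distance $<\delta_0$. Your contraction-plus-recurrence argument shows at most that two \emph{distinct} points of the same fiber $\mM_\w$ cannot lie within $\delta_0$ of each other (their forward orbits would collapse below the uniform fiber separation while remaining distinct by injectivity of the time-$t$ maps on the flow extension). This leaves entirely open the possibility that $\mM_\w$ consists of $m\ge 2$ points mutually separated by at least $\delta_0$, i.e.\ that $\mM$ is an $m$-cover of the base --- which is exactly what the Sacker--Sell/\cite{mano2} theory (Corollary 5.7 of \cite{mano2}) guarantees, and no more. Exponentially stable $m$-covers with $m\ge 2$ do occur (think of a stable subharmonic over a periodic base), so there is no ``singleton property'' to propagate by upper semicontinuity. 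The paper's entire proof of item (ii) is devoted to ruling out $m\ge 2$, and it cannot be done inside $\mM$ alone: one must use that $\mA$ is the \emph{global attractor}, attract the segment $x_\alpha=\alpha x_1+(1-\alpha)x_0$ joining two fiber points (a bounded set lying outside $\mM$), and show via the continuity of $(t,\w,x)\mapsto u(t,\w,x)$ for $t\ge r$ that the index $\alpha\mapsto i(\alpha)$ selecting the nearby sheet of the $m$-cover is locally constant, forcing $\zeta(t^*,\w_0,x_0)=\zeta(t^*,\w_0,x_1)$ and contradicting injectivity on $\mA$. This mechanism is absent from your proposal, so item (ii) --- and with it the continuity of the map $a$ that your Step (c) and part (iii) rely on --- is not proved.
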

\begin{proof}
(i) Recall that Hypotheses~\ref{3.hipos} include the minimality of $(\W,\sigma,\R)$.
According to Theorem 5.9 of \cite{mano2},
the exponential stability of all
the $\zeta\,$-minimal sets (i.e., the strict negativeness of
the upper Lyapunov exponent of all the $\zeta\,$-minimal sets), together
with the fact that $\mA$ is connected
(see Theorem~\ref{3.teoratt}), ensures that $\mA$
contains at most a $\zeta\,$-minimal set $\mM$
which agrees with the omega-limit set of any of the elements of $\mA$.
In particular, the semiflow $(\W\times\WW_2,\zeta\,,\R^+)$ admits just one $\zeta\,$-minimal set.
In addition, according to Corollary 5.7 of~\cite{mano2}, $\mM$ is an $m$-cover of the base
for an integer $m\ge 1$ admitting a flow extension.
Therefore, (i) will be proved once shown that $\mA\subseteq\mM$.
\par
Recall that the exponential
stability of $\mM$ means that there exist $\beta>0$, $k\ge 1$,
and $\delta>0$ such that, if $(\w,\bar x)\in\mM$ and $(\w,x)\in\W\times\WW_2$
satisfy $\n{x-\bar x}_{\WW_2}<\delta$, then the function $u(t,\w,x)$ is defined for
$t\in[0,\infty)$, and
\begin{equation}\label{3.exp}
 \n{u(t,\w,x)-u(t,\w,\bar x)}_{\WW_2}\le k\,e^{-\beta t}\,
 \n{x-\bar x}_{\WW_2}\quad\text{for all $t\ge 0$}\,.
\end{equation}
\par
On the other hand, Corollary 5.7 of~\cite{mano2} (based on previous results
of \cite{SackerSell} and \cite{noos5}) states that
for each $\w_0\in\W$ there exist a neighborhood $\mU_{\,\w_0}\subseteq\W$ of
$\w_0$ and $m$ continuous maps $a^1_{\w_0},\ldots,a^m_{\w_0}
\colon\mU_{\,\w_0}\to\WW_2$ such that
\begin{equation}\label{3.fibra}
 \mM_{\w}:=\{x\in\WW_2\,|\;(\w,x)\in\mM\}=\{a^1_{\w_0}(\w),\ldots,a^m_{\w_0}(\w)\}
\end{equation}
for all $\w\in\mU_{\,\w_0}$, with $a^i_{\w_0}(\w)\ne a^j_{\w_0}(\w)$ whenever
$\w\in\mU_{\,\w_0}$ and $i\ne j$.
\par
We take $(\wit\w,\wit x)\in\mA$ and a negative semiorbit of it
in $\mA$ (see Remark \ref{2.notas}.3), which we represent by
$\{(\wit\w\pu t,\wit x_t)\,|\,t\le 0\}$. The alpha-limit of $(\wit\w,\wit x)$
is a positively $\zeta\,$-invariant compact set,
and hence it contains the unique $\zeta\,$-minimal
set $\mM$. Now we take $(\w_0,x_0)\in\mM$, assume without restriction that
$x_0=a^1_{\w_0}(\w_0)$,
and take a $\rho>0$ such that if $d_\W(\w_0,\w)\le\rho$ then $\w\in\mU_{\,\w_0}$ and
$\n{a^1_{\w_0}(\w_0)-a^1_{\w_0}(\w)}_{\WW_2}\le\delta/2$.
Let us write $(\w_0,x_0)=\lim_{n\to\infty}
(\wit\w\pu(-t_n),\wit x_{-t_n})$ for a suitable sequence $(t_n)\uparrow\infty$, and
take $n_0$ such that, for all $n\ge n_0$, the following two properties hold:
$d_\W(\w_0,\wit\w\pu(-t_n))\le\rho$ and $\n{x_0-\wit x_{-t_n}}_{\WW_2}<\delta/2$.
It follows immediately that  $\n{\wit x_{-t_n}-a^1_{\w_0}(\wit\w\pu(-t_n))}_{\WW_2}\le
\n{\wit x_{-t_n}-a^1_{\w_0}(\w_0)}_{\WW_2}+
\n{a^1_{\w_0}(\w_0)-a^1_{\w_0}(\wit\w\pu(-t_n))}_{\WW_2}\le \delta$ for $n\ge n_0$.
Consequently, by~\eqref{3.exp},
\[
\begin{split}
 &\n{\wit x-u(t_n,\wit\w\pu(-t_n),a^1_{\w_0}(\wit\w\pu(-t_n)))}_{\WW_2}\\
 &\quad=\n{u(t_n,\wit\w\pu(-t_n),\wit x_{-t_n})-
 u(t_n,\wit\w\pu(-t_n),a^1_{\w_0}(\wit\w\pu(-t_n)))}_{\WW_2}
 \le k\,e^{-\beta t_n}\,\delta\,,
\end{split}
\]
which implies that
\[
%\begin{split}
 (\wit\w,\wit x)
 =\!\lim_{n\to\infty}(\wit\w,u(t_n,\wit\w\pu(-t_n),a^1_{\w_0}(\wit\w\pu(-t_n))))
 =\!\lim_{n\to\infty}\zeta(t_n,\wit\w\pu(-t_n),a^1_{\w_0}(\wit\w\pu(-t_n)))\,.
%\end{split}
\]
The points $(\wit\w\pu(-t_n),a^1_{\w_0}(\wit\w\pu(-t_n))$
belong to $\mM$, and consequently
also the points $\zeta(t_n,\wit\w\pu(-t_n),a^1_{\w_0}(\wit\w\pu(-t_n))$
belong to $\mM$. Hence, $(\wit\w,\wit x)\in\mM$, and (i) is proved.
\smallskip\par
(ii)
As said in the proof of (i), $\mA=\mM$ is an exponentially stable $m$-cover of the base.
We will use the notation established in~\eqref{3.fibra}, having in mind
that $\mA_\w=\mM_\w$, and choosing (without restriction)
the neighborhood $\mU_{\,\w_0}$ of each $\w_0\in\W$ to be compact.
Let us assume for contradiction that $m\ge 2$. Then, there exists
$d_*>0$ such that, for any $\w\in\W$, any two points of $\mA_\w$ are at a distance
greater than $d_*$. This assertion follows from the compactness of $\mU_{\,\w_0}$,
which in turn ensures that
$d_{\w_0}\!:=\min_{1\le i<j\le m,\w\in\mU_{\,\w_0}}
\n{a^i_{\w_0}(\w)-a^j_{\w_0}(\w)}_{\WW_2}>0$ for all $\w_0\in\W$,
and from the compactness of $\W$. %EL REFEREE DICE ALGO QUE NO ENTIENDO (19,15)???
The constant $d_*$
will play a key role in what follows.
\par
Theorem 3.3 of \cite{noos5} proves that the map
$\w\mapsto\mA_\w$ is continuous in the Hausdorff topology of the set of compact subsets
of $\WW_2$. This fact ensures the existence of $\rho_1>0$ such that
\begin{equation}\label{3.dd}
\text{if $\;d_\W(\w_1,\w_2)\le\rho_1$}\quad\text{ then
$\;\max_{x_1\in\mA_{\w_1}}\left(\min_{x_2\in\mA_{\w_2}}
 \n{x_1-x_2}_{\WW_2}\right)\le d_*/6\;$:}
\end{equation}
see definition~\eqref{2.distreal}. In other words, if $d_\W(\w_1,\w_2)\le\rho_1$
then for every $x_1\in\mA_{\w_1}$ there exists at least a point $x_2\in\mA_{\w_2}$
such that $\n{x_1-x_2}_{\WW_2}\le d_*/6$.
We can assume without restriction that $\rho_1\le d_*/3$.
\par
Let us fix $\w_0\in\W$, choose $x_0$ and $x_1$ in $\mA_{\w_0}$ with $x_0\ne x_1$,
and define $x_\alpha=\alpha\,x_1+(1-\alpha)\,x_0$ for all $\alpha\in[0, 1]$.
Choose also a sequence $(t_n)\uparrow\infty$ such that $\lim_{n\to\infty}\w_0\pu t_n=\w_0$,
and choose $\rho_2\in(0,\rho_1]$ such that, if $d_\W(\w,\w_0)\le\rho_2$, then
$\w\in\mU_{\,\w_0}$. Since $\mA$ is the global attractor and the set
$\mB:=\{(\w_0,x_\alpha)\,|\,\alpha\in[0, 1]\}$ is bounded,
there exists $n_0$ large enough to guarantee that
$d_\W(\w_0\pu t^*\!,\w_0)\le\rho_2/2$ and
$\dist(\zeta_{\,t^*}(\mB),\mA)<\rho_2/2$ for $t^*=t_{n_0}>0$.
The definition of $\dist$ (see \eqref{2.dist}) ensures that,
for each $\alpha\in[0,1]$ there exists
$(\w_\alpha,x^*_\alpha)\in\mA$ with
$d_\W(\w_0\pu t^*\!,\w_\alpha)+\n{u(t^*\!,\w_0,x_\alpha)-x^*_\alpha}_{\WW_2}\le\rho_2/2$.
In particular, for each $\alpha\in[0,1]$, $d_\W(\w_0,\w_\alpha)\le\rho_2$, and hence
there exists $j(\alpha)\in\{1,\ldots,m\}$
such that $x^*_\alpha=a^{j(\alpha)}_{\w_0}(\w_\alpha)$; and, since
$d_\W(\w_0\pu t^*\!,\w_\alpha)\le\rho_1$, we deduce from \eqref{3.dd}
that there exists $i(\alpha)\in\{1,\ldots,m\}$ with
$\n{a^{i(\alpha)}_{\w_0}(\w_0\pu t^*)-a^{j(\alpha)}_{\w_0}(\w_\alpha)}_{\WW_2}\le d_*/6$.
Therefore,
\begin{equation}\label{3.fundbound}
\begin{split}
 &\n{u(t^*\!,\w_0,x_\alpha)-a^{i(\alpha)}_{\w_0}(\w_0\pu t^*)}_{\WW_2}
 \le\n{u(t^*\!,\w_0,x_\alpha)-a^{j(\alpha)}_{\w_0}(\w_\alpha)}_{\WW_2}\\
 &\qquad\qquad\qquad\qquad\quad +
 \n{a^{j(\alpha)}_{\w_0}(\w_\alpha)-a^{i(\alpha)}_{\w_0}(\w_0\pu t^*)}_{\WW_2}
 \le \Frac{\rho_2}{2} + \Frac{d_*}{6}\le \Frac{d_*}3
\end{split}
\end{equation}
for all $\alpha\in[0,1]$.
Note also that $i(\alpha)$ is the unique index satisfying
the previous bound: the existence
of two of them would provide two points on $\mA_{\w_0\pu t^*}$
at a distance no greater than
$2d_*/3$, which is impossible. In particular,
\begin{equation}\label{3.esta}
 \!\!(\w_0\pu t^*\!,a_{\w_0}^{i(0)}(\w_0\pu t^*))=\zeta(t^*\!,\w_0,x_0)
 \;\text{\,and\,}\;
 (\w_0\pu t^*\!,a_{\w_0}^{i(1)}(\w_0\pu t^*))=\zeta(t^*\!,\w_0,x_1)\,,
\end{equation}
since all these points belong to the $\zeta\,$-invariant set $\mA$.
\par
The next goal is to check that the map $\alpha\mapsto i(\alpha)$ is constant on
$[0,1]$, which is the same as saying that it is locally constant. Note that
there is no restriction in assuming that $t^*\ge r$.
We fix $\alpha_0\in[0,1]$
and use the continuity of the semiflow $\zeta\,$ guaranteed by
Theorem \ref{2.flujocontinuo}(vi) to find $\ep>0$ such that, if
$\alpha\in(\alpha_0-\ep,\alpha_0+\ep)\cap[0,1]$, then
$\n{u(t^*,\w,x_{\alpha_0})-u(t^*,\w,x_\alpha)}_{\WW_2}<d_*/3$.
This inequality, together with \eqref{3.fundbound}, means that
$\n{a^{i(\alpha_0)}_{\w_0}(\w_0\pu t^*)-a^{i(\alpha)}_{\w_0}(\w_0\pu t^*)}_{\WW_2}<d_*$,
and the assertion follows once more from the definition of $d_*$.
\par
The last property and \eqref{3.esta} prove that $\zeta(t^*\!,\w_0,x_0)
=\zeta(t^*\!,\w_0,x_1)$,
which is impossible, since $\zeta\,$ defines a flow on $\mA$ and $x_0\ne x_1$. This
contradiction proves that $m=1$. The existence of the continuous map
$a\colon\W\to\WW_2$ such that $\mA=\{(\w,a(\w))\,|\;\w\in\W\}$
is a trivial consequence of this fact (recall that $\mA$ is closed),
and the proof of (ii) is complete.
\smallskip\par
(iii) Note that $u(t,\w,a(\w))=a(\wt)$ for all $t\in\R$ and $\w\in\W$.
For any $c> 0$,
we define $\mB_c:=\{x\in\WW_2\,|\;\n{x}_{\WW_2}\le c\}$,
which is a bounded subset of $\WW_2$.
Since $\mA$ is the global attractor, there exists $t_c$ such that
$\dist(\zeta_{\,t_c}(\W\times\mB_c),\mA)<\delta$, where $\delta$ is determined by~\eqref{3.exp}.
Therefore, if $x\in\mB_c$,
\[
\begin{split}
 \n{u(t,\w,x)-a(\wt)}_{\WW_2}&\!=\!
 \n{u(t-t_c,\wt_c,u(t_c,\w,x))\!-\!u(t-t_c,\wt_c,a(\wt_c))}_{\WW_2}\\
 &\le \!k\,e^{-\beta(t-t_c)}\,
 \n{u(t_c,\w,x)-a(\wt_c)}_{\WW_2}
\end{split}
\]
for all $t\ge t_c$. The properties guaranteed by Hypotheses~\ref{3.hipos}
ensure the existence of $l_c\ge 1$ such that $\n{u(t,\w,x)-a(\wt)}_{\WW_2}\le l_c
\n{x-a(\w)}_{\WW_2}$ whenever $(\w,x)\in\W\times\mB_c$ for all $t\in[0,t_c]$:
see Theorem 3.6 of \cite{mano1}.
The assertion in (iii) is satisfied by $k_c:=k\,l_ce^{\beta t_c}\ge l_c\,$.
\end{proof}
\begin{nota}
Note that, since the orbits of $(\W,\sigma,\R)$ are
almost periodic (and with the same frequency module:
see e.g.~\cite{jomo}),
the existence of a copy of the base ensures the existence of at least one
almost periodic solution for each one of the systems of the family, including of
course the initial one \eqref{3.model0}. If, in addition, the copy of the base
is a global attractor, then
this almost periodic solution is unique (this follows, for
example, from Remark~\ref{2.notas}.2); and, if this attractor is
exponentially stable, so is the almost periodic solution. Altogether,
these properties read as:
if the hypotheses of Theorem~\ref{3.teorAcopy} hold, then
the initial system \eqref{3.model0} has a unique almost periodic solution, which
in addition is exponentially stable.
\end{nota}
%%%%%%%%%%%%%%%%%%%%%%%%%%%%%%%%%%%%%%%%%%%%%%%%%%%%%%%%%%%%%%%%%%%%%%%%%%%%%%%%%%%%%
%%%%%%%%%%%%%%%%%%%%%%%%%%%%%%%%%%%%%%%%%%%%%%%%%%%%%%%%%%%%%%%%%%%%%%%%%%%%%%%%%%%%%
%%%%%%%%%%%%%%%%%%%%%%%%%%%%%%%%%%%%%%%%%%%%%%%%%%%%%%%%%%%%%%%%%%%%%%%%%%%%%%%%%%%%%
%%%%%%%%%%%%%%%%%%%%%%%%%%%%%%%%%%%%%%%%%%%%%%%%%%%%%%%%%%%%%%%%%%%%%%%%%%%%%%%%%%%%%
\subsection{Bounding the upper Lyapunov exponents}
As we advanced in Remark \ref{2.notaLyap}
(and will formulate explicitly in Theorem~\ref{3.teor52}),
the hypotheses regarding exponential stability of Theorem~\ref{3.teorAcopy} hold
if and only if the upper Lyapunov exponents of the
$\zeta\,$-minimal sets are negative (in which case the unique minimal set is the
global attractor $\mA$). So that the obvious
question is how to obtain these exponents, or
at least how to bound them. The rest of this section is focussed on this problem.
\par
Let us first recall the definition of the upper Lyapunov exponent of a
$\zeta\,$-invariant compact subset $\mK\subset\W\times\WW_2$.
The set $\mK$ could be, for instance, any $\zeta\,$-minimal set or omega-limit set,
and either is contained in the global attractor $\mA$ provided
by Theorem~\ref{3.teoratt} under Hypotheses \ref{3.hipos}, or it is the
set $\mA$ itself: see Remarks~\ref{2.notas}.1 and \ref{2.notapseudo}.1.
According to Theorem \ref{2.flujocontinuo}(viii), $(\mK,\zeta\,,\R^+)$ is a continuous
semiflow.
We represent the elements of $\mA$ by $\hw=(\w,x)$ with $x=\lsm x_1\\x_2\rsm$,
and note that Remark \ref{2.notapseudo}.2 ensures that $x\in C^1_2\subset\WW_2$.
We write $\hwt=\zeta(t,\hw)$ for $\hw\in\mA$ and $t\ge 0$,
and consider the family of linear variational equations
\begin{equation}\label{3.eqvariacional}
\left\{\begin{array}{l}
 \dot{z}_1(t)=A_1(\hwt)\,z_1(t)+B_1(\hwt)\,z_2(t)+C_1(\hwt)\,z_2(t-\wih\tau(\hwt))\\[.1cm]
 \dot{z}_2(t)=A_2(\hwt)\,z_2(t)+B_2(\hwt)\,z_1(t)+C_2(\hwt)\,z_1(t-\wih\tau(\hwt))
\end{array}\right.
\end{equation}
for $\hw=(\w,x)\in\mA$, with
$\wih\tau\colon\mK\to[0,r]\,,\;\hw\mapsto\tau(x_1(0),x_2(0))$, and
where $A_1,\,B_1,\,C_1,$ $A_2,$ $\,B_2,\,C_2\colon\mA\to\R$ are defined by
\[
\begin{split}
A_1(\hw)&:=-a_1(\w)+b_1(\w)\dot{f}(x_1(0))\\
&\quad\, -c_1(\w)\,\dot{g}(x_2(-\tau(x_1(0), x_2(0))))\,
\dot{x_2}(-\tau(x_1(0),x_2(0)))\,\Frac{d\tau}{dy_1}(x_1(0),x_2(0))\,,\\
B_1(\hw)&:=-c_1(\w)\,\dot{g}(x_2(-\tau(x_1(0),x_2(0))))\,
\dot{x_2}(-\tau(x_1(0),x_2(0)))\,\Frac{d\tau}{dy_2}(x_1(0), x_2(0))\,,\\
C_1(\hw)&:=c_1(\w)\,\dot{g}(x_2(-\tau(x_1(0),x_2(0))))\,,
%\\[.15cm]
\end{split}
\]
\[
\begin{split}
A_2(\hw)&:=-a_2(\w)+\,\,c_2(\w)\,\dot{g}(x_2(0))\\
&\quad\, -b_2(\w)\,\dot{f}(x_1(-\tau(x_1(0), x_2(0))))\,
\dot{x_1}(-\tau(x_1(0)\,x_2(0)))\,\Frac{d\tau}{dy_2}(x_1(0), x_2(0))\,,\\
B_2(\hw)&:=-b_2(\w)\,\dot{f}(x_1(-\tau(x_1(0), x_2(0))))\,
\dot{x_1}(-\tau(x_1(0),x_2(0)))\,\Frac{d\tau}{dy_1}(x_1(0),x_2(0))\,,\\
C_2(\hw)&:=b_2(\w)\,\dot{f}(x_1(-\tau(x_1(0),x_2(0))))\,.
\end{split}
\]
As is explained in Sections 4 of \cite{mano1} and 3 of~\cite{mano2}, the family
\eqref{3.eqvariacional} induces two globally defined linear skew-product semiflows,
namely
\begin{equation}\label{4.twoskew}
\begin{split}
 &\zeta_L\colon\R^+\!\!\times\mA\times\WW_2\to\mA\times\WW_2\,,
 \quad(t,\hw, v)\mapsto(\hwt,w(t,\hw,v))\,,\\
 & \wit\zeta_L\colon\R^+\!\!\times\mA\times C_2\to\mA\times C_2\,,\qquad\quad\;\;\;
 (t,\hw, v)\mapsto(\hwt,w(t,\hw,v))\,.
\end{split}
\end{equation}
In both cases, $w(t,\hw,v)(s):=z(t+s,\hw,v)$ for
$s\in[-r,0]$, where $z(t,\hw,v)$ represents the solution of
the system \eqref{3.eqvariacional} corresponding to $\hw\in\mA$ with
$z(s,\hw,v)=v(s)$ for $s\in[-r,0]$. (And, in fact, $w(t,\w,x,v)$
determines the derivative with respect to the initial condition $x$
in the direction of the vector $v$ of the solution $u(t,\w,x)$ lying in
the set $\mA$; that is,
$w(t,\w,x,v)=u_x(t,\w,x)\,v=\lim_{\ep\to 0}(u(t,\w,x+\ep v)-u(t,\w,x))/\ep$.)
Note that the difference
between $\zeta_L$ and $\wit\zeta_L$ relies on their domains of definition: the restriction
of $\wit\zeta_L$ to $\mA\times\WW_2$ agrees with $\zeta_L$. In fact,
$(\mA\times C_2,\wit\zeta_L,\R^+)$ is a continuous semiflow, as
Corollary 4.3 of \cite{mano1} proves; while
$(\mA\times\WW_2,\zeta_L,\R^+)$ satisfies similar properties
to those described in Theorem \ref{2.flujocontinuo}, which are also detailed in
Corollary 4.3 of \cite{mano1}.
\par
Let $\mK$ be a $\zeta\,$-invariant compact set. Then
$\zeta_L$ and $\wit\zeta_L$ can be restricted to $\mK\times\WW_2$ and
$\mK\times C_2$.
We represent by $\lambda_\mK$ and $\wit\lambda_\mK$ the upper Lyapunov exponents
of $\mK$ for the semiflows $(\mK\times\WW_2\!,\zeta_L,\R^+)$ and
$(\mK\times C_2,\wit\zeta_L,\R^+)$, respectively (see Definition~\ref{2.defLyap},
and note that it can be adapted without changes to the case of
the semiflow $(\mA\times\WW_2,\zeta_L,\R^+)$, as explained in
Remark 3.9 of \cite{mano2}).
The following property, fundamental for our purposes, is proved in
Theorem 3.10 of~\cite{mano2}.
\begin{teor}\label{3.teorigual}
Suppose that Hypotheses~\ref{3.hipos} hold. Then, $\lambda_\mK=\wit\lambda_\mK$.
\end{teor}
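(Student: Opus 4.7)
The plan is to establish both inequalities $\wit\lambda_\mK\le\lambda_\mK$ and $\lambda_\mK\le\wit\lambda_\mK$, exploiting the fact that the semiflows $\zeta_L$ and $\wit\zeta_L$ share the same cocycle $w(t,\hw,v)$; the only asymmetries are the norm used on the fiber ($\WW_2$ versus $C_2$) and the set over which the supremum in Definition~\ref{2.defLyap} runs. Hypotheses~\ref{3.hipos} together with the compactness of $\mK$ guarantee that the coefficients $A_i,B_i,C_i$ appearing in \eqref{3.eqvariacional} are uniformly bounded on $\mK$ by some constant $M$, and this is the only quantitative input that will be needed.

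For $\wit\lambda_\mK\le\lambda_\mK$ I would fix $\hw\in\mK$ and $v\in C_2\setminus\{0\}$. The right-hand side of \eqref{3.eqvariacional} is dominated by $3M$ times a sup-norm of the solution, so $z(\,\cdot\,,\hw,v)$ is globally Lipschitz on $[0,\infty)$; in particular, $w(r,\hw,v)\in\WW_2$, and a Gronwall argument on $[0,r]$ gives $\n{w(r,\hw,v)}_{\WW_2}\le K_r\,\n{v}_{C_2}$ for some $K_r$ independent of $\hw$ and $v$. The cocycle identity $w(t,\hw,v)=w(t-r,\hw\pu r,w(r,\hw,v))$, combined with $\n{\,\cdot\,}_{C_2}\le\n{\,\cdot\,}_{\WW_2}$, then yields
\[
 \n{w(t,\hw,v)}_{C_2}\le\n{w(t-r,\hw\pu r,w(r,\hw,v))}_{\WW_2}\qquad\text{for }t\ge r\,.
\]
Taking $(1/t)\ln$, passing to $\limsup_{t\to\infty}$, and using that $1/t$ and $1/(t-r)$ produce the same limit, the right-hand side is bounded by $\lambda_\mK(\hw\pu r)\le\lambda_\mK$; a supremum over $\hw$ and $v$ then closes this direction.

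For $\lambda_\mK\le\wit\lambda_\mK$ I would take $v\in\WW_2\setminus\{0\}$ and $t\ge r$. Because $\wih\tau\in[0,r]$, for a.e.~$s\in[t-r,t]$ the delayed argument $s-\wih\tau(\hw\pu s)$ lies in $[t-2r,t]$, and \eqref{3.eqvariacional} gives
\[
 |\dot z(s,\hw,v)|\le 3M\max\bigl(\n{w(t,\hw,v)}_{C_2},\,\n{w(t-r,\hw,v)}_{C_2}\bigr)\,.
\]
Combining this with $\n{w(t,\hw,v)}_{\WW_2}=\max(\n{w(t,\hw,v)}_{C_2},\n{\dot w(t,\hw,v)}_{L^\infty})$ produces a constant $K>0$ depending only on $M$ with
\[
 \n{w(t,\hw,v)}_{\WW_2}\le K\max\bigl(\n{w(t,\hw,v)}_{C_2},\,\n{w(t-r,\hw,v)}_{C_2}\bigr)\qquad\text{for }t\ge r\,.
\]
Applying $(1/t)\ln$ and $\limsup_{t\to\infty}$ (the shift by $-r$ does not affect the exponential rate) gives $\limsup(1/t)\ln\n{w(t,\hw,v)}_{\WW_2}\le\wit\lambda_\mK$, and suprema over $\hw$ and $v$ finish the argument.

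The main obstacle I expect is the clean handling of the restriction $t\ge r$ that appears in both directions: the first inequality leans on the \emph{smoothing effect} of \eqref{3.eqvariacional}, which converts any $C_2$ datum into a $\WW_2$ element after one delay interval; the second leans on the variational equation \emph{dominating} $|\dot z(s)|$ by a sup-norm of $z$ on an interval of length at most $2r$. Both bounds are uniform in $\hw\in\mK$ precisely because $A_i,B_i,C_i$ are continuous on the compact set $\mA\supseteq\mK$.
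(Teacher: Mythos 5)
Your argument is correct, but note that the paper itself does not prove Theorem~\ref{3.teorigual} at all: it simply cites Theorem~3.10 of \cite{mano2}, so there is no in-text proof to match. What you supply is a self-contained two-sided comparison, and both halves hold up. For $\wit\lambda_\mK\le\lambda_\mK$ you use the smoothing effect of the linear SDDE: since the coefficients $A_i,B_i,C_i$ are continuous on the compact set $\mA$ (recall $\mA\subseteq\mC^0_2$, so the terms $\dot x_i(-\tau(\cdots))$ make sense and are uniformly bounded), a Gronwall estimate on $[0,r]$ gives $w(r,\hw,v)\in\WW_2$ with $\n{w(r,\hw,v)}_{\WW_2}\le K_r\n{v}_{C_2}$, and the cocycle identity together with $\n{\pu}_{C_2}\le\n{\pu}_{\WW_2}$ and the $\zeta\,$-invariance of $\mK$ (so that $\hw\pu r\in\mK$) closes the estimate; the degenerate case $w(r,\hw,v)=0$ gives exponent $-\infty$ and is harmless. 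For $\lambda_\mK\le\wit\lambda_\mK$ the bound $\n{\dot w(t,\hw,v)}_{L^\infty}\le 3M\max\bigl(\n{w(t,\hw,v)}_{C_2},\n{w(t-r,\hw,v)}_{C_2}\bigr)$ for $t\ge r$ is exactly what is needed, since $\limsup_{t\to\infty}\frac1t\ln$ of a maximum is the maximum of the $\limsup$'s and the shift by $r$ does not alter the exponential rate. This is, in essence, the standard mechanism behind the cited result of \cite{mano2}: the $\WW_2$- and $C_2$-norms of solution segments are equivalent up to a constant and a time shift of length $r$ along any trajectory, which forces the two Lyapunov exponents to coincide. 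Your version is a legitimate and arguably more transparent replacement for the external citation.
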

\begin{defi}\label{3.defly}
Suppose that Hypotheses~\ref{3.hipos} hold.
The {\em upper Lyapunov exponent of the set $\mK$
for the semiflow $(\mK,\zeta\,,\R^+)$}
is $\lambda_\mK=\wit\lambda_\mK$.
\end{defi}
The next result, previously mentioned, is part of Theorem 5.2 of \cite{mano2}.
\begin{teor}\label{3.teor52}
Suppose that Hypotheses~\ref{3.hipos} hold, and
let $\mM\subset\W\times\WW_2$ be a minimal set. Then, $\mM$ is
exponentially stable if and only if $\lambda_\mM<0$.
\end{teor}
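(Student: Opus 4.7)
The plan is to prove both implications by relating the nonlinear semiflow $\zeta$ to its linearization $\zeta_L$ (equivalently $\wit\zeta_L$ by Theorem \ref{3.teorigual}) along orbits in $\mM$. For the easy direction, suppose $\mM$ is exponentially stable with constants $\delta_0,k,\alpha$ from Definition \ref{2.defi3}. Fix $\hw=(\w,\bar x)\in\mM$ and $v\in\WW_2$ with $v\ne 0$. For sufficiently small $\ep>0$ we have $\n{\ep v}_{\WW_2}<\delta_0$, so
\[
\n{u(t,\w,\bar x+\ep v)-u(t,\w,\bar x)}_{\WW_2}\le k\,e^{-\alpha t}\,\ep\,\n{v}_{\WW_2}.
\]
Dividing by $\ep$ and passing to the limit using the differentiability of $u$ with respect to the initial state (Corollary 4.3 of \cite{mano1}), one obtains $\n{w(t,\hw,v)}_{\WW_2}\le k\,e^{-\alpha t}\n{v}_{\WW_2}$ for all $t\ge 0$. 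Hence $\lambda^+_s(\hw,v)\le -\alpha$, and taking suprema over $\hw\in\mM$ and $v\ne 0$ yields $\lambda_\mM\le -\alpha<0$.

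For the converse, assume $\lambda_\mM<0$. The first step is to upgrade the pointwise negativity of Lyapunov exponents to uniform exponential decay of the linear skew-product semiflow: there should exist $K\ge 1$ and $\beta>0$ with $\n{w(t,\hw,v)}_{\WW_2}\le K\,e^{-\beta t}\n{v}_{\WW_2}$ for every $\hw\in\mM$, $v\in\WW_2$, and $t\ge 0$. Since working through $\wit\zeta_L$ on $\mM\times C_2$ (legitimate by Theorem \ref{3.teorigual}) gives a genuinely continuous linear skew-product semiflow over the compact minimal base $\mM$, this is a classical Sacker--Sell type statement: the dynamical spectrum is compact and the supremum of the Lyapunov exponents controls the spectral upper bound, so $\wit\lambda_\mM<0$ forces the whole spectrum into $(-\infty,0)$ and hence uniform exponential decay.

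The second step is to transfer this linear exponential decay to the nonlinear semiflow. For $(\w,\bar x)\in\mM$ and $(\w,x)\in\W\times\WW_2$ with $\n{x-\bar x}_{\WW_2}$ small, use the mean value representation
\[
u(t,\w,x)-u(t,\w,\bar x)=\int_0^1 u_x(t,\w,\bar x+s(x-\bar x))(x-\bar x)\,ds,
\]
valid by the differentiability in the initial state proved in \cite{mano1}. Combining the continuity of $u_x$ with the uniform bound just obtained along $\mM$ and a standard Gronwall argument propagating stability to a $\WW_2$-neighborhood of $\mM$ produces constants $\wit k\ge 1$ and $\wit\alpha>0$ and a radius $\wit\delta_0>0$ realizing the conditions of Definition \ref{2.defi3}.

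The chief obstacle is the weak regularity of SDDE semiflows: $\zeta$ fails to be globally continuous on $[0,r)\times\W\times\WW_2$, the minimal set satisfies $\mM\subseteq\mC^0_2$ by Remark \ref{2.notapseudo}.2 but a perturbation $x\in\WW_2$ need not lie in $\mC^0_{2,\w}$, and the differential $u_x$ along non-compatible initial data is subtle. Consequently the Gronwall step must be carried out in the $\WW_2$-norm rather than the friendlier $C_2$-norm, and it relies on the fine continuity properties of $u_x$ established in \cite{mano1}. This is precisely the technical core executed in Theorem 5.2 of \cite{mano2}, on which the present statement rests.
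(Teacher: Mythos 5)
The paper gives no proof of this theorem: it is stated as part of Theorem 5.2 of \cite{mano2} and simply quoted, so there is no internal argument to compare against. Your sketch correctly reconstructs the strategy of that reference --- the difference-quotient argument for the easy implication; the Sacker--Sell spectral argument that upgrades $\lambda_\mM<0$ to uniform exponential decay for the continuous linear semiflow $(\mM\times C_2,\wit\zeta_L,\R^+)$, transferred to the $\WW_2$ setting via Theorem~\ref{3.teorigual}; and a first-approximation/Gronwall argument for the nonlinear transfer --- and you explicitly defer the technical core to the same reference the paper cites, so in effect you are taking the paper's own route. If you wanted this to stand as a self-contained proof, the one step to repair is the mean-value representation: it presupposes that $u(t,\w,\bar x+s(x-\bar x))$ is defined for all $t\ge 0$ and all $s\in[0,1]$, which is part of the conclusion being established; the usual remedy is to prove the contraction estimate on a bounded time interval (where existence and continuity of $u_x$ are available from \cite{mano1}) and then extend to $[0,\infty)$ by a continuation argument that keeps the perturbed orbit in a small $\WW_2$-neighborhood of $\mM$.
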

The difficulty to estimate the value of the upper-Lyapunov exponent is obvious
in the case that we are considering:
the equations of the family \eqref{3.eqvariacional} are written in terms of
the semiorbits of $(\mA,\zeta\,,\R^+)$, which in general are unknown.
So that our goal is to bound $\lambda_\mK$ (for all $\mK$ as described before)
by a quantity which is easier to estimate. Our method
is based on results of comparison of solutions applied to the
equations of the family \eqref{3.eqvariacional} and those of two
families of cooperative linear systems of FDEs (see Definition~\ref{2.defmonotone}):
\eqref{3.eqvarcomp} and \eqref{3.eqvarmay}. The advantages of
estimating the upper Lyapunov exponents for these families
are explained in Remark~\ref{3.notaventajas}.
\par
So, we first consider the family of FDEs
\begin{equation}\label{3.eqvarcomp}
\left\{\!\!\begin{array}{l}
 \dot{z}_1(t)=A_1(\hwt)\,z_1(t)+|B_1(\hwt)|\,
 z_2(t)+|C_1(\hwt)|\,z_2(t-\wih\tau(\hwt))\,,\\[0.2cm]
 \dot{z}_2(t)=A_2(\hwt)\,z_2(t)+|B_2(\hwt)|\,z_1(t)+|C_2(\hwt)|\,z_1(t-\wih\tau(\hwt))
\end{array}\right.
\end{equation}
for $\hw\in\mA$, which we write for short as $\dot z(t)=\bar L(\hwt)\,z_t$.
For each $v\in C_2$, we denote by $\bar{z}(t,\hw,v)$ the solution of \eqref{3.eqvarcomp}
satisfying $\bar z(s,\hw,v)=v(s)$ for $s\in[-r,0]$, and
define $\bar w(t,\hw,v)(s)=\bar z(t+s,\hw,v)$ for $s\in[-r,0]$. Then the map
\[
 \overline{\zeta}_L\colon\R^+\!\!\times\mA\times C_2\to \mA\times C_2\,,\quad
 (t,\hw,v)\mapsto(\hwt, \bar w(t,\hw,v))
\]
defines a linear skew-product semiflow on $\mA\times C_2$.
It is easy to check that, for all $\hw\in\mA$, the vector field
$(t,z)\mapsto \bar L(\hwt)\,z_t$ satisfies
the quasimonotonicity condition on $\R\times C_2$ described in
Chapter 5 of~\cite{smit} (which is the same as
our condition \hyperlink{2.QM}{Q} but for $x^1$ and $x^2$ in $C_2$).
Therefore, Theorem 1.1 of the same chapter
shows that the semiflow $(\mA\times C_2,\overline\zeta_L,\R^+)$
is monotone (see Definition~\ref{2.defmonotone}).
Clearly, $\overline\zeta_L$ can be restricted to $\mK\times C_2$.
We represent by $\bar{\lambda}_{\mK}$ the upper Lyapunov exponent of $\mK$ for the
semiflow $(\mK\times C_2,\overline\zeta_L,\R^+)$.
\par
The following notation will be used from now on: given a two-dimensional
real vector or function $x=\lsm x_1\\ x_2\rsm$,
we denote $|x|_m=\lsm|x_1|\\|x_2|\rsm$. Note that $|v|_m\in C_2$ if $v\in C_2$, and that
$\n{v}_{C_2}=\n{|v|_m}_{C_2}$.
\begin{teor}\label{3.teorcomp1}
Suppose that Hypotheses~\ref{3.hipos} hold. Then,
\begin{itemize}
\item[(i)] $|z(t,\hw,v)|_m\le \bar{z}(t, \hw,|v|_m)$ for all $v\in C_2$.
\item[(ii)] If $\mM\subset\W\times\WW_2$ is a $\zeta\,$-minimal set, then
$\lambda_\mM\le\bar\lambda_\mM$.
\item[(iii)] If $\bar\lambda_\mM<0$ for
all the $\zeta\,$-minimal sets $\mM\subset\W\times\WW_2$, then
all the conclusions of Theorem \ref{3.teorAcopy} hold.
\end{itemize}
\end{teor}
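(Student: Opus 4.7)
My plan is to argue along the chain (i) $\Rightarrow$ (ii) $\Rightarrow$ (iii), reducing everything to a single comparison estimate for the linear variational equation. For (i), I would set $y(t):=|z(t,\hw,v)|_m$ componentwise and first establish, in the sense of upper right Dini derivatives, the pair of inequalities
\[
D^+ y_i(t)\le A_i(\hwt)\,y_i(t)+|B_i(\hwt)|\,y_{3-i}(t)+|C_i(\hwt)|\,y_{3-i}(t-\wih\tau(\hwt))\,,\qquad i=1,2.
\]
At times where $z_i(t)\ne 0$, multiplying $\dot z_i$ by $\text{sgn}(z_i)$ and bounding $\text{sgn}(z_i)\,B_i\,z_{3-i}\le|B_i|\,|z_{3-i}|$ (and analogously for the delayed term) gives the inequality. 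At times where $z_i(t)=0$ the first-order expansion $|z_i(t+h)|\le|\dot z_i(t)|\,h+o(h)$ yields the same one-sided bound, since the diagonal term $A_i\,y_i$ vanishes. The right-hand side is exactly the cooperative linear functional vector field of \eqref{3.eqvarcomp}, and the initial data match: $y(s)=|v|_m(s)$ for $s\in[-r,0]$. Invoking the comparison principle for cooperative linear FDEs (the linear functional analogue of Theorem~\ref{2.teormonotone}; since $\wih\tau(\hwt)$ depends on $t$ only through the already continuous base flow, the classical quasi-monotone framework of Chapter 5 of \cite{smit} applies without modification) then gives $y(t)\le\bar z(t,\hw,|v|_m)$ for every $t\ge 0$.

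For (ii), part (i) together with the monotonicity of the Euclidean norm on $\R^2$ recalled in Section~\ref{sec2} gives $|z(t,\hw,v)|\le|\bar z(t,\hw,|v|_m)|$ for every $t\ge 0$, and hence $\n{w(t,\hw,v)}_{C_2}\le\n{\bar w(t,\hw,|v|_m)}_{C_2}$ for every $t\ge r$. Passing to $\limsup_{t\to\infty}(1/t)\ln(\cdot)$ and using $\n{|v|_m}_{C_2}=\n{v}_{C_2}$, then taking supremum over $\hw\in\mM$ and $v\in C_2\setminus\{0\}$, yields $\wit\lambda_\mM\le\bar\lambda_\mM$; Theorem~\ref{3.teorigual} identifies $\wit\lambda_\mM=\lambda_\mM$, so (ii) follows.

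Part (iii) is then immediate: if $\bar\lambda_\mM<0$ on every $\zeta$-minimal set $\mM\subset\W\times\WW_2$, (ii) gives $\lambda_\mM<0$ on each, Theorem~\ref{3.teor52} certifies the exponential stability of every $\zeta$-minimal set, and this is precisely the additional hypothesis of Theorem~\ref{3.teorAcopy}, whose conclusions then follow verbatim. The main technical obstacle lies in (i): the componentwise absolute value $|z|_m$ is merely absolutely continuous, not $C^1$, at zeros of the components, so the differential-inequality argument must be formulated with Dini derivatives and fed into a comparison theorem stated at that level of generality; one must also verify that the cooperative comparison theorem for linear FDEs works with the merely continuous, time-dependent delay $\wih\tau(\hwt)$, though because $\wih\tau$ enters the equation only through the continuous base variable this reduces to the classical quasi-monotone setting.
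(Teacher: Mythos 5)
Your proposal is correct and follows the same route as the paper: the chain is (i) $\Rightarrow$ (ii) via monotonicity of the Euclidean norm and Theorem~\ref{3.teorigual}, and (iii) then follows from Theorems~\ref{3.teor52} and~\ref{3.teorAcopy}. The only difference is that for (i) the paper simply cites Lemma~4.2 of \cite{noos4}, whereas you supply the underlying Dini-derivative comparison argument directly; that argument is the standard one and is sound.
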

\begin{proof}
Statement (i) is proved in Lemma 4.2 of Novo {\em et al.}~\cite{noos4}, and
ensures that
\[
 \limsup_{t\to\infty}\frac{1}{t}\ln\n{z_t(\hw, v)}_{C_2}
 \le \limsup_{t\to\infty}\frac1{t}\ln\n{\bar{z}_t(\hw, |v|_m)}_{C_2}\,.
\]
This inequality, the definition of the upper Lyapunov exponent, and
Theorem~\ref{3.teorigual}, yield (ii). Statement (iii) is a consequence
of (ii) and Theorems \ref{3.teor52} and \ref{3.teorAcopy}.
\end{proof}
Given a function $l\colon\W\to\R$, we denote
$l^+(\w):=\max(l(\w),0)$ and $l^-(\w):=\max(-l(\w),0)$, so
that $l=l^+-l^-$. The second \lq\lq majorant\rq\rq~family
of cooperative FDEs is
\begin{equation}\label{3.eqvarmay}
\left\{\!\!\begin{array}{l}
\dot{z}_1(t)=\wih A_1(\wt)\,z_1(t)+\wih B_1(\wt)\,z_2(t)+
\wih C_1(\wt)\,z_2(t-\wih\tau(\hwt))\,,\\[0.2cm]
\dot{z}_2(t)=\wih A_2(\wt)\,z_2(t)+\wih B_2(\wt)\,z_1(t)+
\wih C_2(\wt)\,z_1(t-\wih\tau(\hwt))
\end{array}\right.
\end{equation}
for $\hw=(\w,\lsm x_1\\x_2\rsm)\in\mA$, with
$A_i(\hw)\le \wih A_i(\w)$, $B_i(\hw)\le|B_i(\hw)|\le \wih B_i(\w)$ and
$C_i(\hw)\le|C_i(\hw)|\le \wih C_i(\w)$ for $i=1,2$.
In order to ensure these conditions, we take
\[
\begin{split}
\wih A_1(\w)&:=-a_1(\w)+\kappa_{1f}b_1^+(\w)-\ep_{1f}b_1^-(\w)+
\kappa_{2g}c_1^-(\w)-\ep_{2g}c_1^+(\w)\,,\\%[0.2cm]
\wih B_1(\w)&:=\kappa_{3g}|c_1(\w)|\,,\\
\wih C_1(\w)&:=\kappa_{4g}|c_1(\w)|\,,\\
\wih A_2(\w)&:=-a_2(\w)+\kappa_{1g}c_2^+(\w)-\ep_{1g}c_2^-(\w)
+\kappa_{2f}b_2^-(\w)-\ep_{2f}b_2^+(\w)\,,\\%[0.2cm]
\wih B_2(\w)&:=\kappa_{3f}|b_2(\w)|\,,\\
\wih C_2(\w)&:=\kappa_{4f}|b_2(\w)|\,,
\end{split}
\]
where
\[
\begin{split}
\kappa_{1f}&:=\sup_{\hw\in\mA}\dot{f}(x_1(0))\,,\qquad
\ep_{1f}:=\inf_{\hw\in\mA}\dot{f}(x_1(0))\,,\\
\kappa_{1g}&:=\sup_{\hw\in\mA}\dot{g}(x_2(0))\,,\qquad
\ep_{1g}:=\inf_{\hw\in\mA}\dot{g}(x_2(0))\,,\\
\kappa_{2f}&:=\sup_{\hw\in\mA}\dot{f}(x_1(-\tau(x_1(0), x_2(0))))\,
\dot{x_1}(-\tau(x_1(0), x_2(0)))\,\Frac{d\tau}{dy_2}(x_1(0), x_2(0))\,,\\
\ep_{2f}&:=\inf_{\hw\in\mA}\dot{f}(x_1(-\tau(x_1(0), x_2(0))))\,
\dot{x_1}(-\tau(x_1(0), x_2(0)))\,\Frac{d\tau}{dy_2}(x_1(0), x_2(0))\,,\\
\kappa_{2g}&:=\sup_{\hw\in\mA}\dot{g}(x_2(-\tau(x_1(0), x_2(0))))\,
\dot{x_2}(-\tau(x_1(0), x_2(0)))\,\Frac{d\tau}{dy_1}(x_1(0), x_2(0))\,,\\
\ep_{2g}&:=\inf_{\hw\in\mA}\dot{g}(x_2(-\tau(x_1(0), x_2(0))))\,
\dot{x_2}\,(-\tau(x_1(0), x_2(0)))\Frac{d\tau}{dy_1}(x_1(0), x_2(0))\,,\\
\kappa_{3f}&:=\sup_{\hw\in\mA}\dot{f}(x_1(-\tau(x_1(0), x_2(0))))\,\left|\,
\dot{x_1}(-\tau(x_1(0), x_2(0)))\,\Frac{d\tau}{dy_1}(x_1(0), x_2(0))\,\right|\,,\\
\kappa_{3g}&:=\sup_{\hw\in\mA}\dot{g}(x_2(-\tau(x_1(0), x_2(0))))\,\left|\,
\dot{x_2}(-\tau(x_1(0), x_2(0)))\,\Frac{d\tau}{dy_2}(x_1(0), x_2(0))\,\right|\,,\\
\end{split}
\]
\[
\begin{split}
\kappa_{4f}&:=\sup_{\hw\in\mA}\dot{f}(x_1(-\tau(x_1(0), x_2(0))))\,,\\
\kappa_{4g}&:=\sup_{\hw\in\mA}\dot{g}(x_2(-\tau(x_1(0), x_2(0))))\,.
\end{split}
\]
We represent by $\wih{z}(t,\hw,v)$
the solution of \eqref{3.eqvarmay} with
$\wih{z}(t,\hw,v)(s)=v(s)$ for $s\in[-r,0]$
for each $\hw\in\mA$ and $v\in C_2$, denote $\wih w(t,\hw,v)(s)=\wih z(t+s,\hw,v)$
for $s\in[-r,0]$, and consider the linear skew-product semiflow
\[
 \wih\zeta_L\colon\R^+\!\!\times\mA\times C_2\to\mA\times C_2\,,\quad
 (t,\hw, v)\mapsto(\hwt,\wih w(t,\hw,v))\,,
\]
which is also monotone (see again Theorem 1.1 of Chapter 5 of~\cite{smit}).
And, for all $\zeta\,$-minimal set
$\mM\subset\W\times\WW_2$
%(so that $\mM\subseteq\mA$: see Remarks
%\ref{2.notas}.2 and \ref{2.notapseudo}.1)
we represent by $\wih\lambda_\mM$ the upper Lyapunov exponent for $\mM$ with
respect to the restricted semiflow $(\mM\times C_2,\wih\zeta_L,\R^+)$.
\par
Note that, if we represent \eqref{3.eqvarcomp} and \eqref{3.eqvarmay}
by $\dot{z}(t)=\bar L(\hwt)\,z_{t}$ and
$\dot{z}(t)=\wih L(\hwt)\,z_{t}$,
respectively, then
\begin{equation}\label{3.desigL}
 \bar L(\hw)\,z\le\wih L(\hw)\,z
\end{equation}
for all $\hw\in\mA$ and $z\in C_2$ with $z\ge 0$.
\begin{teor}\label{3.teorcomp2}
Suppose that Hypotheses~\ref{3.hipos} hold.
\begin{itemize}
\item[(i)] If $\mM\subset\W\times\WW_2$ is a $\zeta\,$-minimal set, then
$\lambda_\mM\le\bar\lambda_\mM\le\wih\lambda_\mM$.
\item[(ii)] If $\wih\lambda_\mM<0$ for
all the $\zeta\,$-minimal sets $\mM\subset\W\times\WW_2$, then
all the conclusions of Theorem \ref{3.teorAcopy} hold.
\end{itemize}
\end{teor}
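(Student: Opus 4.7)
The plan is to first reduce (i) to the inequality $\bar\lambda_\mM \le \wih\lambda_\mM$, since $\lambda_\mM \le \bar\lambda_\mM$ is already Theorem \ref{3.teorcomp1}(ii). Fixing $\hw \in \mM$, the systems \eqref{3.eqvarcomp} and \eqref{3.eqvarmay} become linear cooperative FDEs with a common time-dependent (no longer state-dependent, along this particular orbit) delay $\wih\tau(\hwt)$, so the classical cooperative comparison theory for FDEs applies directly.

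The key step is to show that, for $v \in C_2$ with $v \ge 0$, one has $0 \le \bar z(t,\hw,v) \le \wih z(t,\hw,v)$ for all $t \ge 0$. Positive invariance of the nonnegative cone of $C_2$ under $\overline\zeta_L$ follows from the quasimonotonicity of $\bar L$ via Theorem 1.1 of Chapter 5 of \cite{smit}, already invoked to prove the monotonicity of $\overline\zeta_L$. Once $\bar z(t,\hw,v) \ge 0$, the pointwise inequality \eqref{3.desigL} gives
$\dot{\bar z}(t,\hw,v) = \bar L(\hwt)\,\bar z_t(\hw,v) \le \wih L(\hwt)\,\bar z_t(\hw,v)$,
so $\bar z(\cdot,\hw,v)$ is a subsolution of the $\wih L$-system starting from $v$; the same cooperative comparison theorem, applied with $\wih L$ as the (quasimonotone) reference vector field, yields $\bar z \le \wih z$. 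Because the $C_2$-norm is monotone on the nonnegative cone, this gives $\n{\bar z_t(\hw,v)}_{C_2} \le \n{\wih z_t(\hw,v)}_{C_2}$. To remove the sign restriction on $v$, I would use the monotonicity and linearity of $\overline\zeta_L$: from $-|v|_m \le v \le |v|_m$ and $\bar z(t,\hw,-|v|_m) = -\bar z(t,\hw,|v|_m)$ one gets $|\bar z(t,\hw,v)|_m \le \bar z(t,\hw,|v|_m)$ componentwise, hence $\n{\bar z_t(\hw,v)}_{C_2} \le \n{\bar z_t(\hw,|v|_m)}_{C_2} \le \n{\wih z_t(\hw,|v|_m)}_{C_2}$, with $\n{|v|_m}_{C_2} = \n{v}_{C_2}$.

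Passing to $\limsup_{t\to\infty}(1/t)\log$ and then taking the supremum over $v \ne 0$ and $\hw \in \mM$ yields $\bar\lambda_\mM \le \wih\lambda_\mM$, which finishes (i). Statement (ii) is then immediate: by (i), negativity of $\wih\lambda_\mM$ for every $\zeta\,$-minimal $\mM$ propagates to $\lambda_\mM$; Theorem \ref{3.teor52} turns this into exponential stability of every $\zeta\,$-minimal set; and Theorem \ref{3.teorAcopy} supplies the conclusions. The main obstacle I anticipate is precisely the comparison step between two distinct cooperative systems: one must verify carefully that the quasimonotonicity of the majorant $\wih L$ (not merely of $\bar L$), combined with \eqref{3.desigL} restricted to the nonnegative cone, is the right hypothesis to apply Smith's classical comparison theorem for FDEs with purely time-dependent delay, thereby bypassing the more delicate SDDE version in Theorem \ref{2.teormonotone}.
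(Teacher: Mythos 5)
Your proposal is correct and follows essentially the same route as the paper's proof: reduce to $\bar\lambda_\mM\le\wih\lambda_\mM$, use the monotonicity of $\overline\zeta_L$ to bound $|\bar z(t,\hw,v)|_m$ by $\bar z(t,\hw,|v|_m)$, and then compare the two cooperative systems on the nonnegative cone via \eqref{3.desigL} and Theorem 1.1 of Chapter 5 of \cite{smit}. The comparison step you flag as a potential obstacle is handled in the paper exactly as you describe, by a second application of that same theorem.
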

\begin{proof}
Let us fix a $\zeta\,$-minimal set $\mM\subset\W\times\WW_2$. As said before, the
semiflow $\overline\zeta_L$ is monotone on $\mM\times C_2$.
That is, if
$\hw\in\mM$ and $v_1,v_2\in C_2$ satisfy $v_1\le v_2$, then
$\bar{z}(t,\hw,v_1)\le\bar{z}(t,\hw,v_2)$
for all $t\ge 0$. Consequently,
if $\hw\in\mM$ and $v\in C_2$ then
\[
 -\bar{z}(t,\hw,|v|_m)=\bar{z}(t,\hw,-|v|_m)\le\bar{z}(t,\hw,v)\le\bar{z}(t,\hw,|v|_m),
\]
for all $t\ge 0$, so that $\n{\bar{z}(t,\hw,v)}_{C_2}\le\n{\bar{z}(t,\hw,|v|_m)}_{C_2}$.
Therefore,
\[
\begin{split}
 \bar{\lambda}_s^+(\hw,v)&:=\limsup_{t\to\infty}\frac1{t}\ln\n{\bar{z}_t(\hw,v)}_{C_2}\\
 &\le\limsup_{t\to\infty}\frac1{t}\ln
 \n{\bar{z}_t(\hw,|v|_m)}_{C_2}=\bar{\lambda}_s^+(\hw,|v|_m).
\end{split}
\]
On the other hand, \eqref{3.desigL} allows us to apply
again Theorem 1.1 of Chapter 5 of \cite{smit} in order to obtain
$0\le\bar{z}(t,\hw,|v|_m)\le\wih{z}(t,\hw,|v|_m)$
for all $t\ge 0$, which ensures that
$\n{\bar{z}(t,\hw,|v|_m)}_{C_2}\le\n{\wih{z}(t,\hw,|v|_m)}_{C_2}$.
In turn, this inequality yields
\[
 \bar{\lambda}_s^+(\hw,|v|_m)
 \le\limsup_{t\to\infty}\frac1{t}
 \ln\n{\wih{z}_t(\hw,|v|_m)}_{C_2}=:\wih\lambda^+_s(\hw,|v|_m)\,.
\]
Altogether, we have
$\bar{\lambda}_s^+(\hw,v)\le\bar{\lambda}_s^+(\hw,|v|_m)\le
\wih\lambda^+_s(\hw,|v|_m)\le\wih\lambda_\mM$ for all $\hw\in\mM$ and $v\in C_2$.
Consequently, $\bar{\lambda}_{\mM}\le\wih{\lambda}_{\mM}$.
Applying Theorem~\ref{3.teorcomp1}(ii), we have $\lambda_{\mM}\le\bar\lambda_\mM
\le\wih{\lambda}_{\mM}$.
This proves (i). Statement (ii)
is an immediate consequence of (i) and Theorems \ref{3.teor52} and \ref{3.teorAcopy}.
\end{proof}
\begin{nota}\label{3.notaventajas}
1.~Regarding the upper Lyapunov exponent of a minimal subset of
$\W\times\WW_2$, one of the advantages
of working with cooperative families (as is the case of
\eqref{3.eqvarcomp} and \eqref{3.eqvarmay}) is that the exponent
can be obtained by computing \eqref{2.lyapind} for any solution
of a particular one of the systems corresponding to
a strongly positive initial state;
that is, by a vector $v\in C_2$ such that
$v_i(s)>0$ for all $s\in[-r,0]$ and $i=1,2$. To check this assertion, we use the
notation corresponding to the flow $\overline\zeta_L$, and
note that: (1) $\bar\lambda_s(\hw,v)\le
\bar\lambda_s(\hw,|v|_m)$ (see the proof of Theorem~\ref{3.teorcomp2}(i));
(2) $|v|_m=v$ if $v$ is positive; and (3) for all pair of strongly positive
vectors $v$ and $w$ in $C_2$, there exists a constant $\alpha>0$ such that
$(1/\alpha)\,w\le v\le \alpha w$. And we also use the ergodic uniqueness of
the base flow, to ensure the independence with respect to the chosen system.
%EL REFEREE DIJO ALGO QUE NO ENTIENDO (24,12-)???
\par
2.~Note also that, in the case of \eqref{3.eqvarmay}, the coefficients
$\wih A_i$, $\wih B_i$, and $\wih C_i$ are
evaluated along the semiorbits of the base flow
$(\W,\sigma,\R)$ instead of those of the flow over $\mA$ (which are, in general, unknown).
Although the equations still depend on the semiorbits on $\mA$ through the delay,
there are many results on exponential stability which are given in terms of
$\wih A_i$, $\wih B_i$, and $\wih C_i$ and independent of the delay.
Basically, they consist
in comparing the negativeness of $\wih A_i$ with the positiveness of
$\wih B_i$ and $\wih C_i$: see e.g.~\cite{hale2}, \cite{drzo}, \cite{wu}, \cite{aros},
and references therein. An example of this situation
is given in the proof of Proposition \ref{4.propcopy}.
\end{nota}
%%%%%%%%%%%%%%%%%%%%%%%%%%%%%%%%%%%%%%%%%%%%%%%%%%%%%%%%%%%%%%%%%%%%%%%%%%%%%%%%%%%%%
%%%%%%%%%%%%%%%%%%%%%%%%%%%%%%%%%%%%%%%%%%%%%%%%%%%%%%%%%%%%%%%%%%%%%%%%%%%%%%%%%%%%%
%%%%%%%%%%%%%%%%%%%%%%%%%%%%%%%%%%%%%%%%%%%%%%%%%%%%%%%%%%%%%%%%%%%%%%%%%%%%%%%%%%%%%
%%%%%%%%%%%%%%%%%%%%%%%%%%%%%%%%%%%%%%%%%%%%%%%%%%%%%%%%%%%%%%%%%%%%%%%%%%%%%%%%%%%%%
\section{Numerical experiments}\label{sec4}
In this section we perform numerical experiments on the
family of SDDEs
\begin{equation}\label{4.main}
\left \{\;
\begin{aligned}
\dot y_1(t)&=(-1+0.3\cos(\theta_1+\sqrt{2}\,t ))\,y_1(t)\\
    &\quad\;+(-1.5+\sin(\theta_2+t)) \tanh(y_1(t))  \\
    &\quad\;+(1.4-\sin(\theta_2+t))\tanh(y_2(t-\tau(y_1,y_2)))+0.4\,,\\[.1cm]
\dot y_2(t)&=(-1.2+0.3\sin(\theta_1+\sqrt{2}\,t))\,y_2(t)\\
    &\quad\;+(1.3-0.3 \cos(\theta_2+t))\tanh(y_1(t-\tau(y_1,y_2)))\\
    &\quad\;+(-1+0.3\cos(\theta_2+t))\tanh(y_2(t))+0.4
\end{aligned}
\right.
\end{equation}
for $t\ge0$, where $(\theta_1,\theta_2)\in\T^2$: for this example $\W$
is the 2-torus, which we identify with
$(\R/[0,2\pi])^2$, and the (Kronecker) flow $\sigma$ is given by
$\sigma(t,(\theta_1,\theta_2))\equiv (\theta_1,\theta_2){\cdot}t:=(\theta_1+\sqrt{2}t,\theta_2+\,t)\in\T^2$.
The family \eqref{4.main} is obtained via the hull procedure from
the single SDDE \eqref{4.main}$_{(0,0)}$. Therefore, it
satisfies Hypotheses \ref{3.hipos} whenever the delay
$\tau\colon\R^2\to[0, r]$ is $C^1$, and the results of Theorem
\ref{3.teoratt} apply: the skew product semiflow
$(\T^2\times\WW_2,\zeta\,,\R^+)$ induced by the solutions of \eqref{4.main}
has a connected global attractor $\mA$.
\par
Our first purpose is to estimate a subset of $\T^2\times \WW_2$ as
small as possible in which the attractor $\mA$ lies.
This procedure, completed in
Corollary \ref{4.propzona},
will not depend on the specific expression of the delay.
But later we will take
\[
 \tau^\delta(y_1,y_2):=1-\sin(\delta\,(y_1+y_2))
\]
in order to prove, in Proposition~\ref{4.propcopy},
that the attractor is a copy of the base.
\par
Given $x_0\in\WW_2$, we represent by
$y(t,\theta_1,\theta_2,x_0)$ the solution of
\eqref{4.main}$_{(\theta_1,\theta_2)}$
with $y(s,\theta_1,\theta_2,x_0)=x_0(s)$ for $s\in[-r,0]$.
%%%%%%%%%%%%%%%%%%%%%%%%%%%%%%%%%%%%%%%%%%%%%%%%%%%%%%%%%%%%%%%%%%%%%%%%%%%%%%%%%%%%%
%%%%%%%%%%%%%%%%%%%%%%%%%%%%%%%%%%%%%%%%%%%%%%%%%%%%%%%%%%%%%%%%%%%%%%%%%%%%%%%%%%%%%
%%%%%%%%%%%%%%%%%%%%%%%%%%%%%%%%%%%%%%%%%%%%%%%%%%%%%%%%%%%%%%%%%%%%%%%%%%%%%%%%%%%%%
%%%%%%%%%%%%%%%%%%%%%%%%%%%%%%%%%%%%%%%%%%%%%%%%%%%%%%%%%%%%%%%%%%%%%%%%%%%%%%%%%%%%%
\subsection{A set containing the global attractor}\label{4.secRectangle}
Let us begin with the estimation of a \lq\lq small\rq\rq~subset of $\T^2\times \WW_2$
containing the attractor $\mA$.
To this end we define the functions
$F_i\colon\T^2\times\R^2\to\R^2$ for $i=1,2$
by
\[
\begin{split}
F_1(\theta_1,\theta_2,k_1,k_2)&:=(-1+0.3\cos(\theta_1))\,k_1+
(-1.5+\sin(\theta_2))\tanh(k_1)\\
&\quad\;\,+(1.4-\sin(\theta_2))\tanh(k_2)+0.4\,,\\[.1cm]
F_2(\theta_1,\theta_2,k_1,k_2)&:=(-1.2+0.3\sin(\theta_1))k_2+
(1.3-0.3 \cos(\theta_2))\tanh(k_1)\\
&\quad\;\,+(-1+0.3\cos(\theta_2))\tanh(k_2)+0.4\,,
\end{split}
\]
so that \eqref{4.main}$_{(\theta_1,\theta_2)}$ agrees with
\[
\left \{\;
\begin{aligned}
\dot y_1(t)&=F_1((\theta_1,\theta_2){\cdot}t,y_1(t),y_2(t-\tau(y_1,y_2)))\,, \\[.05cm]
\dot y_2(t)&=F_2((\theta_1,\theta_2){\cdot}t,y_1(t-\tau(y_1,y_2)),y_2(t))
\end{aligned}\right.
\]
for $t\ge0$. Below we will find sequences $(k_{1n}^\pm)$ and
$(k_{2n}^\pm)$ such that
\begin{equation}\label{4.ineqk}
 k_{in}^-<k_{i,n+1}^-<k_{i,n+1}^+<k_{i,n}^+
 \quad\text{for $i<1,2\;$ and $\;n\ge 0$}\,,
\end{equation}
and such that if
$\mR_n:=[k_{1,n}^-,k_{1,n}^+] \times [k_{2,n}^-,k_{2,n}^+]\subset\R^2$
for $n\ge 0$, then
\begin{prop}\label{4.propRn}
$\mA\subseteq\T^2\times\wit\mR_n$ for
$\wit\mR_n:=\{\varphi\in\WW_2\,|\;\varphi\colon[-r,0]\to\mR_n\}$.
\end{prop}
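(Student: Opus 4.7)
My plan is to construct the sequences $(k_{i,n}^\pm)$ inductively, at each step comparing~\eqref{4.main} with an uncoupled family of scalar linear nonautonomous ODEs via Theorem~\ref{2.teormonotone}. The key observation is that condition~\hyperlink{2.QM}{Q} is trivially satisfied by any vector field $G(\w,z^1,z^2)$ whose $i$-th component depends only on the $i$-th coordinate of $z^1$ and is independent of $z^2$; hence the comparison theorem applies to~\eqref{4.main} together with its state-dependent delay, regardless of the specific expression of $\tau$.

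For the base case $n=0$, I would use the trivial bound $|\tanh|\le 1$ together with the explicit coefficients of~\eqref{4.main} to produce constants $M_i^\pm$ such that $-a_i(\theta\pu t)\,y_i-M_i^-\le F_i\le -a_i(\theta\pu t)\,y_i+M_i^+$ for every value of the arguments. Since $a_i$ is bounded away from zero and $(\T^2,\sigma,\R)$ is minimal almost periodic, the scalar nonautonomous ODE $\dot u=-a_i(\theta\pu t)\,u+m$ admits a unique bounded entire solution over $\T^2$ for each $m\in\R$; I would define $k_{i,0}^+$ (resp.~$k_{i,0}^-$) as the supremum (resp.~infimum) over $\T^2$ of this entire solution for $m=M_i^+$ (resp.~$m=-M_i^-$). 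Theorem~\ref{2.teormonotone} applied to the uncoupled majorizing and minorizing ODE systems then yields $y_i(t,\theta,x)\in[k_{i,0}^-,k_{i,0}^+]$ for all sufficiently large $t$. For the inductive step I would exploit that the cross coefficients $c_1(\theta_2)\in[0.4,2.4]$ and $b_2(\theta_2)\in[1.0,1.6]$ are strictly positive and $\tanh$ is monotone increasing: once one knows $y_j(t-\tau)\in[k_{j,n}^-,k_{j,n}^+]$ asymptotically, the delayed term in $F_1$ is sandwiched between $c_1(\theta_2)\tanh(k_{2,n}^-)$ and $c_1(\theta_2)\tanh(k_{2,n}^+)$ (and similarly with $b_2$ in $F_2$), rather than by the crude $\pm|c_1(\theta_2)|$. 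Substituting these sharper bounds into~\eqref{4.main} produces new affine-in-$y_i$ majorants and minorants depending only on $\theta$ and $y_i$, whose bounded entire solutions over $\T^2$ define $k_{i,n+1}^\pm$ via sup/inf.

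The main obstacle I foresee is the careful bookkeeping of signs in the majorants—since $b_1(\theta_2)\in[-2.5,-0.5]$ is negative while $c_1(\theta_2)$ is positive, and one does not know a priori whether $[k_{j,n}^-,k_{j,n}^+]$ contains zero—together with verifying the strict nesting~\eqref{4.ineqk} at each step; this last property should follow from the fact that $\tanh(k_{j,n}^\pm)\in(-1,1)$ is strictly smaller in absolute value than the crude constant $1$ used to produce $k_{j,n-1}^\pm$, so the corresponding bounded entire solution shrinks strictly. Once the iteration is in place, the conclusion $\mA\subseteq\T^2\times\wit\mR_n$ follows from Remark~\ref{2.notas}.3 (adapted to our semiflow via Remark~\ref{2.notapseudo}.1): every point of $\mA$ admits a complete bounded orbit, so applying the asymptotic inclusion to such an orbit (in the pullback direction) confines its whole trajectory into $\T^2\times\wit\mR_n$ for every $n$. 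No derivative condition is needed, since $\wit\mR_n$ restricts only the pointwise range of $\varphi\in\WW_2$.
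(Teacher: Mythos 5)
Your overall strategy is viable but genuinely different from the paper's, and it has one real gap in the inductive step. The paper defines $k_{in}^\pm$ as the zeros of explicit strictly decreasing scalar functions $G_{in}^\pm$ chosen so that $F_i(\theta_1,\theta_2,k_1,k_2)$ has a definite sign once $k_i$ passes $k_{in}^\pm$ and the \emph{other} component stays in $[k_{j,n-1}^-,k_{j,n-1}^+]$; the containment $\mA\subseteq\T^2\times\wit\mR_n$ is then proved by induction together with a backward blow-up argument: if some $(\bar\theta_1,\bar\theta_2,\bar x)\in\mA$ had $\bar x_1(0)>k_{1n}^+$, then along a complete orbit (whose second component lies in $[k_{2,n-1}^-,k_{2,n-1}^+]$ by the induction hypothesis) one gets $\dot{\bar y}_1(t)\le\delta<0$ for all $t\le 0$, contradicting the boundedness of the complete orbit backward in time. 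No comparison theorem is used at this stage. You instead define the bounds as sup/inf of bounded entire solutions of comparison scalar linear ODEs and argue by forward absorption plus a pullback step; both proofs ultimately rest on the same characterization of $\mA$ as the union of bounded complete orbits, and your base case is essentially the absorbing-set computation already carried out in the proof of Theorem~\ref{3.teoratt}. (Note also that your rectangles need not coincide numerically with the paper's, which matters later for \eqref{4.R}, but the proposition as a pure containment statement is equally served by any nested family with the stated property.)

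The gap is in your use of Theorem~\ref{2.teormonotone} for $n\ge 1$. That theorem requires $F(\w,z^1,z^2)\le G(\w,z^1,z^2)$ for \emph{all} $(\w,z^1,z^2)\in\W\times\R^2\times\R^2$, whereas your sharpened majorant (replacing the delayed term by $c_1(\theta_2)\tanh(k_{2,n}^+)$, etc.) dominates $F_1$ only when the delayed argument lies in $[k_{2,n}^-,k_{2,n}^+]$. Knowing that arbitrary solutions enter this region only ``asymptotically'' does not let you restart the comparison, because the hypothesis of the theorem concerns the vector fields globally, not the values attained along a particular orbit; moreover, after restarting at a time $T$, the state-dependent delay can still reach back to segments produced before the solution entered the region. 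The clean repair is exactly what the paper does: run the induction \emph{on $\mA$ itself}, where the induction hypothesis confines the delayed values for all $t\in\R$ along every complete orbit, so that one has a pointwise scalar differential inequality $\dot{\bar y}_1(t)\le -a_1(\theta{\cdot}t)\,\bar y_1(t)+m(\theta{\cdot}t)$ along the orbit; an integrating-factor (Gronwall) bound, or the paper's sign argument, then finishes the step without invoking the SDDE comparison theorem. With that substitution your argument closes; the strict nesting \eqref{4.ineqk} you mention would still need a separate monotonicity check at each step, but it is not part of the statement being proved.
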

For ease of notation we drop the explicit dependence of $F_{i}$
on $(\theta_1,\theta_2)$. We build the four sequences
$(k_{1n}^\pm)_{n\ge 1}$ and $(k_{2n}^\pm)_{n\ge 0}$
following the next steps:
\begin{list}{}{\leftmargin 22pt}
\item[{\bf S$_{0\text{ }}$}]
    Initialize the sequences $(k_{1n}^\pm)_{n\ge 0}$ and $(k_{2n}^\pm)_{n\ge 0}$ to the following values:
    \[
     k_{i0}^+:=+\infty \qquad\text{and}\qquad k_{i0}^-:=-\infty
     \qquad \text{for $\,i=1,2$}\,.
    \]
\item[{\bf S$_{11}^+$}]
    We set $F_{11}^+(k):=F_1(k,k_{20}^+)$ and note that
    $F_1(k,k_2)\le F_{11}^+(k)$ for all $k_2\le k_{20}^+$
    and all $(\theta_1,\theta_2)\in\T^2$.
    We want to find a $k_{11}^+$ such that $F_{11}^+(k)<0$
    for all $(\theta_1,\theta_2)\in\T^2$ whenever $k>k_{11}^+$.
    Note that $F_{11}^+(0)=(1.4-\sin(\theta_2))+0.4>0$ and hence
    $k_{11}^+$ must necessarily be greater than $0$. For $k>0$ and
    all $(\theta_1,\theta_2)\in\T^2$,
    $F_{11}^+(k)\le G_{11}^+(k)$, where
    \[
    \begin{split}
    G_{11}^+(k)&:=-0.7\,k-1.5 \tanh(k)+1.4\tanh(k_{20}^+)\\    &\;\quad\quad\quad+|\tanh(k)-\tanh(k_{20}^+)\,|+0.4\,.
    \end{split}
    \]
    This is a strictly decreasing function, so that we achieve our goal by
    taking $k_{11}^+$ as the unique point with $G_{11}^+(k_{11}^+)=0$.
\item[{\bf S$_{11}^-$}] We set $F_{11}^-(k):=F_1(k,k_{20}^-)$ and observe that  $F_1(k,k_2)>F_{11}^-(k)$ for all $k_2\ge k_{20}^-$
    and all $(\theta_1,\theta_2)\in\T^2$.
    We want to find a $k_{11}^-$ such that $F_{11}^-(k)>0$
    for all $(\theta_1,\theta_2)\in\T^2$ whenever $k<k_{11}^-$.
    Let us define the continuous and strictly decreasing function
    \[
     G_{11}^-(k):=\left\{\!\!\begin{array}{l}
     -0.7\,k-1.5\tanh(k)+1.4\tanh(k_{20}^-))\\[.05cm]
     \qquad-|\tanh(k)-\tanh(k_{20}^-)\,|+0.4
     \qquad\qquad\qquad \text{ if $k<0$}\,, \\[.1cm]
     -1.3\,k-1.5\tanh(k)+1.4\tanh(k_{20}^-))\\[.05cm]
     \qquad-|\tanh(k)-\tanh(k_{20}^-)\,|+0.4
     \qquad\qquad\qquad\text{ if $k\ge0$}\,.
    \end{array} \right.
    \]
    Then $F_{11}^-(k)\ge G_{11}^-(k)$ for all $k\in\R$ and
    all $(\theta_1,\theta_2)\in\T^2$. We take $k_{11}^-$ such that
    $G_{11}^-(k_{11}^-)=0$.
\item[{\bf S$_{21}^+$}] We set $F_{21}^+(k):=F_2(k_{10}^+,k)$ and note that
    $F_2(k_1,k)\le F_{21}^+(k)$ for all $k_1\le k_{10}^+$
    and all $(\theta_1,\theta_2)\in\T^2$.
    As in {\bf S$_{11}^+$}, we look for $k_{21}^+$ (which must be greater than $0$)
    such that $F_{21}^+(k)<0$
    for all $(\theta_1,\theta_2)\in\T^2$ whenever $k>k_{21}^+$,
    what we achieve by taking
    $k_{21}^+$ as the unique zero of the strictly decreasing function
    \[
    \begin{split}
     G_{21}^+(k)&:=-0.9\,k-\tanh(k)+1.3\tanh(k_{10}^+)\\
     &\quad\quad\quad\quad +0.3\,|\tanh(k)-\tanh(k_{10}^+)\,|+0.4\,.
    \end{split}
    \]
\item[{\bf S$_{21}^-$}] We set $F_{21}^-(k):=F_2(k_{10}^-,k)$ and note that
    $F_2(k,k_2)\ge F_{21}^-(k)$ for all $k\ge k_{10}^-$
    and all $(\theta_1,\theta_2)\in\T^2$.
    Reasoning as in {\bf S${11}^-$} we define $k_{21}^-$ as the unique
    zero of the continuous and strictly decreasing function
    \[
     G_{21}^-:=\left\{\!\!\begin{array}{l}
     -0.9\,k-\tanh(k)+1.3\tanh(k_{10}^-)\\[.05cm]
     \qquad -0.3\,|\tanh(k)-\tanh(k_{10}^-)\,|+0.4
     \quad\qquad\qquad \text{ if $k<0$}\,, \\[.1cm]
     -1.5k-\tanh(k)+1.3\tanh(k_{10}^-)\\[.05cm]
     \qquad -0.3\,|\tanh(k)-\tanh(k_{10}^-)\,|+0.4
     \quad\qquad\qquad \text{ if $k\ge 0$}\,.
     \end{array}\right.
    \]
\end{list}
In the steps {\bf S$_{1n}^\pm$} we define $k_{1n}^\pm$
by repeating steps {\bf S$_{11}^\pm$}
with $k_{2,n-1}^\pm$ in place of
$k_{20}^\pm$; and in the steps {\bf S$_{2n}^\pm$} we define $k_{2n}^\pm$
by repeating steps {\bf S$_{21}^\pm$}
with $k_{1,n-1}^\pm$ in place of $k_{10}^\pm$.
It is not hard (by applying an induction procedure
based on comparing the functions $G_{in}^\pm$, whose zeroes
determine the four sequences) that \eqref{4.ineqk} hold.
\medskip\par\noindent
{\em Proof of Proposition \ref{4.propRn}}.~Is is obvious that
$\mA\subset\wit\mR_0=\R^2$. We will assume that
$\mA\subset\wit\mR_{n-1}$ and prove that
$\mA\subset\wit\mR_n$. Recall that
$\mA$ is composed by the globally defined and bounded orbits:
see Remarks~\ref{2.notas}.3 and \ref{2.notapseudo}.1.
Let us assume that there exists $(\bar\theta_1,\bar\theta_2,\bar x)\in\mA$ such that
$\bar x\notin\wit\mR_n$. There is no restriction in assuming that
$\bar x(0)\notin\mR_n$. We will get a contradiction supposing that
$\bar k_1=\bar x_1(0)>k_{1n}^+$,
since the remaining possibilities are analyzed in an analogous way.
We define $\mK_2:=\{x_2(s)\,|\;(\theta_1,\theta_2,x)\in\mA\,,\;s\in[-r,0]\}$,
which is a compact subset of $[k^-_{2,n-1},k^+_{2,n-1}]$, and
$\delta:=\max_{(\theta_1,\theta_2)\in\T^2,k_2\in\mK_2}F_1(\theta_1,\theta_2,
k_{1n}^+,k_2)$. Observe that the expression of $F_1$ and the inequalities in
{\bf S$_{1n}^+$} ensure that $F_1(\theta_1,\theta_2,k_1,k_2)<
F_1(\theta_1,\theta_2,k_{1n}^+,k_2)\le\delta<0$ whenever
$k_1>k_{1n}^+$, $k_2\in\mK_2$ and $(\theta_1,\theta_2)\in\T^2$.
\par
Let us represent by $\{((\bar\theta_1,\bar\theta_2){\cdot}t,\bar u(t))\,|\;t\in\R\}\subset\mA$ a
complete orbit for $(\bar\theta_1,\bar\theta_2,\bar x)$
(which is its value at $0$),
so that: $\bar y(t):=\bar u(t)(0)$
satisfies \eqref{4.main}$_{(\bar\theta_1,\bar\theta_2)}$
for all $t\in\R$, and the second
component $\bar y_2$ takes values in $\mK_2$. Then
$\dot{{\bar y}}_1(0)\le\delta$, and it is easy to deduce
by contradiction that
$\dot{{\bar y}}_1(t)\le\delta$ for all $t<0$. But this precludes
the boundedness of $\bar y(t)$, which is the sought-for contradiction.
\qed\medskip
\par
%Let us fix any open rectangle $\mO\subset\R^2$ containing
%$\mR$. The inequalities \eqref{4.cotas} ensure that any
%$(k_1,k_2)>(k_1^+,k_2^+)$ is a strict upper-solution for \eqref{4.main}
%and that any $(k_1,k_2)<(k_1^+,k_2^+)$ is a strict lower-solution.
%Therefore, by repeating the arguments of ??? (also used in the proof
%of Theorem \ref{3.teoratt}) we conclude that for any
%$(\w,x_0)\in\mA$ there exists a time $t_{(\w,x_0)}\ge 0$ such that
%$y(t,\w,x_0)\in\mO$ for $t\ge t_{(\w,x_0)}$.
%The compactness of $\mA$ and Theorem~\ref{2.flujocontinuo}(viii)
%provide $t_0$ such that
%$\zeta_{\,t_0}(\mA)\subset\T^2\times\wit\mO$, where
%$\wit\mO$ is defined from $\mO$ as $\wit\mR$ from $\mR$.
%The $\zeta\,$-invariance of $\mA$ shows that
%$\mA\subset\T^2\times\wit\mO$, and the independence of the
%choice of $\mO$ completes the proof.
The fact that Proposition \ref{4.propRn} is valid for $n\ge 0$
proves the next assertion.
\begin{coro}\label{4.propzona}
Let us define $\wit\mR:=\bigcap_{\,n\ge 0}\mR_n$. Then
$\mA\subseteq\T^2\times\wit\mR$\,.
\end{coro}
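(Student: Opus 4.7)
The plan is to deduce this corollary as an immediate intersection argument from Proposition \ref{4.propRn}, which has already done all the substantive dynamical work. No new estimate on solutions is required.

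First, I would unpack the (slight abuse of) notation in the statement: the set $\wit\mR\subseteq\R^2$ is defined as a subset of the phase space, but the inclusion $\mA\subseteq\T^2\times\wit\mR$ is to be read in the same style as $\mA\subseteq\T^2\times\wit\mR_n$, i.e.\ identifying $\wit\mR$ on the right-hand side with $\{\varphi\in\WW_2\,|\;\varphi\colon[-r,0]\to\wit\mR\}$. Having fixed this reading, I would apply Proposition \ref{4.propRn} for every index $n\ge 0$ to get the chain of inclusions $\mA\subseteq\T^2\times\wit\mR_n$, and then intersect:
\[
 \mA\;\subseteq\;\bigcap_{n\ge 0}\bigl(\T^2\times\wit\mR_n\bigr)
 \;=\;\T^2\times\bigcap_{n\ge 0}\wit\mR_n\,.
\]

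The only remaining verification is the set-theoretic identity
\[
 \bigcap_{n\ge 0}\wit\mR_n \;=\; \{\varphi\in\WW_2\,|\;\varphi([-r,0])\subseteq\wit\mR\}\,,
\]
which follows from the definition of $\wit\mR_n$: a map $\varphi\in\WW_2$ lies in $\bigcap_{n\ge 0}\wit\mR_n$ iff for every $n\ge 0$ and every $s\in[-r,0]$ one has $\varphi(s)\in\mR_n$, which is equivalent to $\varphi(s)\in\bigcap_{n\ge 0}\mR_n=\wit\mR$ for all such $s$. Combining the two displays gives the desired inclusion.

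There is essentially no obstacle here: the monotonicity \eqref{4.ineqk} of the constructed sequences $(k_{i,n}^\pm)$ ensures that the nested intersection $\wit\mR$ is a well-defined (nonempty, compact) rectangle in $\R^2$, so the statement is not vacuous; and the content of the result — that the attractor's fibers actually take values in this limit rectangle — is entirely carried by the already established Proposition \ref{4.propRn}. The corollary is therefore a one-line consequence once the notation is interpreted consistently.
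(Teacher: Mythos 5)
Your proposal is correct and coincides with the paper's own (one-sentence) justification: the corollary is obtained simply by applying Proposition \ref{4.propRn} for every $n\ge 0$ and intersecting, with the notational identification of $\wit\mR$ with the corresponding subset of $\WW_2$ handled exactly as you describe. No further comment is needed.
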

Note also that, if
$k_i^\pm:=\lim_{n \to \infty} k_{in}^\pm$ for $i=1,2$ and
$\mR:=[k_1^-,k_1^+] \times [k_2^-,k_2^+]\subset\R^2$
then $\wit\mR=\{\varphi\in\WW_2\,|\;\varphi\colon[-r,0]\to\mR\}$.
We also remark that, for $i=1,2$, the functions $G_{in}^\pm$
do not depend on the delay, and
hence the area containing the global attractor is independent
of the particular choice of $\tau$:
for all the families \eqref{4.main}
we obtain as estimate of the rectangle $\mR=[\,0.2910,\,0.5705\,]\times[\,0.3090,\,0.5999\,]$.
The sequences of zeros are obtained using a standard
Matlab command, working with a tolerance of $10^{-5}$.
We will make use of the less accurate bounds
\begin{equation}\label{4.R}
 \mR\subset[\,0.28,\,0.58\,]\times[\,0.30,\,0.61\,]
\end{equation}
in order to prove the next result:
\begin{prop}\label{4.propcopy}
Let us take the delay $\tau^\delta(y_1,y_2):=1-\sin(\delta\,(y_1+y_2))$.
Then, if $|\delta|$ is small enough,
the upper Lyapunov exponent of the corresponding global attractor
$\mA^\delta$ is strictly negative,
and consequently $\mA^\delta$ is a copy of $\,\T^2$.
\end{prop}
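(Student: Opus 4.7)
The plan is to apply Theorem \ref{3.teorcomp2}(ii): it suffices to prove that, for $|\delta|$ small enough, the upper Lyapunov exponent $\wih\lambda_\mM$ of the majorant family \eqref{3.eqvarmay} associated to \eqref{4.main} is strictly negative for every $\zeta$-minimal set $\mM\subset\T^2\times\WW_2$. By Corollary \ref{4.propzona}, every $\mA^\delta$ is contained in $\T^2\times\wit\mR$ uniformly in $\delta$, and a direct estimate of the right-hand sides of \eqref{4.main} over this region yields a constant $D>0$, independent of $\delta$, with $|\dot x_i(s)|\le D$ for every $(\theta,x)\in\mA^\delta$ and $s\in[-r,0]$.

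For the specific delay $\tau^\delta$, one has $|\partial\tau^\delta/\partial y_j|\le|\delta|$. Inspecting the definitions, the six quantities $\kappa_{2f},\kappa_{2g},\ep_{2f},\ep_{2g},\kappa_{3f},\kappa_{3g}$ that contain a $\partial\tau/\partial y_j$ factor are bounded in absolute value by $D|\delta|$, while $\kappa_{1f},\kappa_{1g},\ep_{1f},\ep_{1g},\kappa_{4f},\kappa_{4g}$ are controlled uniformly by $\dot f,\dot g$ on the rectangle $\mR$, independently of $\delta$. Using the sign structure $b_1<0<c_1$ and $c_2<0<b_2$ of \eqref{4.main}, the majorant system at $\delta=0$ collapses to a cooperative linear DDE with constant delay $1$ and coefficients depending only on $\w$: $\wih B_i^0\equiv 0$, $\wih A_1^0=-a_1-\ep_{1f}^0|b_1|$, $\wih A_2^0=-a_2-\ep_{1g}^0|c_2|$, $\wih C_1^0=\kappa_{4g}^0|c_1|$ and $\wih C_2^0=\kappa_{4f}^0|b_2|$.

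The core step is to exhibit the strongly positive vector $v=(1,1)\in C_2$ as a Lyapunov direction at $\delta=0$. Using the explicit trigonometric expressions of the coefficients of \eqref{4.main} together with the numerical bounds on $\dot f,\dot g$ over $\mR$ from \eqref{4.R}, one keeps the dependence on $\theta_1,\theta_2$ coupled within each expression $\wih A_i^0(\w)+\wih C_i^0(\w)$ and verifies that this quantity is $\le-\eta_0$ for $i=1,2$ and all $\w\in\T^2$, for some explicit $\eta_0>0$ (numerically $\eta_0\approx 0.32$). The standard Lyapunov-function argument for cooperative linear DDEs---applied to $w(t):=e^{\mu t}z(t)$ with $\mu>0$ determined by the requirement $(\wih A^0+\wih C^0 e^{\mu})v+\mu v\le 0$, possible for small $\mu$ because $\wih C_i^0 v$ is uniformly bounded---then yields $\wih\lambda_\mM\le-\mu_0$ at $\delta=0$ for some $\mu_0>0$ and every $\zeta$-minimal set $\mM$.

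For $\delta\ne 0$, every majorant coefficient differs from its $\delta=0$ value by $O(|\delta|)$ uniformly in $\w$, so the Lyapunov inequality $(\wih A^\delta+\wih B^\delta+\wih C^\delta)v\le-(\eta_0-C|\delta|)v$ persists for $|\delta|$ smaller than some explicit threshold, with a slightly reduced positive rate. Applying Theorem \ref{3.teorcomp2}(ii) then shows that $\mA^\delta$ is a copy of $\T^2$, and in particular $\lambda_{\mA^\delta}<0$. The main delicate point is the explicit verification of $\wih A_i^0+\wih C_i^0\le-\eta_0$: crude worst-case bounds (taking the maximum of each factor independently) yield a determinant condition $\sup\cdot\sup<\inf\cdot\inf$ that fails for this example, so one must exploit the coupling of the various terms through the shared angles $\theta_1,\theta_2$ to obtain the required strict inequality on the full $2$-torus.
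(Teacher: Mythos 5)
Your proposal is correct and follows essentially the same route as the paper: both reduce the problem to the delay-independent criterion $\wih A_i^\delta+\wih B_i^\delta+\wih C_i^\delta<0$ for the majorant cooperative family \eqref{3.eqvarmay}, verify it using the localization $\mA^\delta\subseteq\T^2\times\wit\mR$ from Corollary \ref{4.propzona} together with the monotonicity of $\dot f=\dot g=\mathrm{sech}^2$ and the $O(|\delta|)$ smallness of every term carrying a factor $\partial\tau^\delta/\partial y_j$, and, as you correctly stress, exploit the coupling of the coefficients through $\theta_2$ (worst case $\sin\theta_2=-1$, giving the bound $\approx-0.32$) rather than bounding each factor independently. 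The only difference is that the paper cites Proposition 5.3 of \cite{noos4} for the stability criterion where you sketch its proof via the exponential-weight argument, and it estimates directly for general $\delta$ rather than splitting into $\delta=0$ plus perturbation.
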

\begin{proof}
Note that the second assertion follows from the first one and
Theorem \ref{3.teorAcopy}. Let us consider the family
\eqref{3.eqvarmay}$_\delta$ constructed from \eqref{4.main} for
the delay $\tau_\delta$, given by
\[
\begin{split}
\wih A_1^\delta(\theta_1,\theta_2)&:=-1+0.3\cos\theta_1 -
\ep_{1f}^\delta(1.5-\sin\theta_2)-\ep_{2g}^\delta(1.4-\sin\theta_2)\,,\\%[0.2cm]
\wih B_1^\delta(\theta_1,\theta_2)&:=\kappa^\delta_{3g}(1.4-\sin\theta_2)\,,\\%[0.2cm]
\wih C_1^\delta(\theta_1,\theta_2)&:=\kappa^\delta_{4g}(1.4-\sin\theta_2)\,,\\
\wih A_2^\delta(\theta_1,\theta_2)&:=-1.2+0.3\sin\theta_1-
\ep_{1g}^\delta(1-0.3\cos\theta_2)-\ep_{2f}^\delta(1.3-0.3\cos\theta_2)\,,\\%[0.2cm]
\wih B_2^\delta(\theta_1,\theta_2)&:=\kappa_{3f}^\delta(1.3-0.3\cos\theta_2)\,,\\%[0.2cm]
\wih C_2^\delta(\theta_1,\theta_2)&:=\kappa_{4f}^\delta(1.3-0.3\cos\theta_2)\,,
\end{split}
\]
where the constants are defined by the expressions given before
Theorem \ref{3.teorcomp2}, now corresponding to $\tau^\delta$ and
$\mA^\delta$. According to Proposition 5.3 of
of \cite{noos4}, it is enough to prove that
\begin{equation}\label{4.ly}
 \wih A_1^\delta+\wih B_1^\delta+\wih C_1^\delta<0 \qquad\text{and}\qquad
 \wih A_2^\delta+\wih B_2^\delta+\wih C_2^\delta<0\,.
\end{equation}
Note that $f$ and $g$ are given in our case by $\tan h$, whose
derivative decreases in $\R^+$, and that Corollary~\ref{4.propzona} and
\eqref{4.R}
yield $x(s)\in[\,0.28,\,0.58\,]\times[\,0.30,\,0.61\,]$
for all $s\in[-2,0]$ and $(\theta_1,\theta_2,x)\in\mA^{\delta}$.
This ensures that
$\ep^\delta_{1f}\ge\dot f(0.58)$, $\ep^\delta_{1g}\ge \dot g(0.61)$,
$\kappa^\delta_{4g}\le \dot g(0.30)$ and $\kappa^\delta_{4f}\le \dot f(0.28)$.
Therefore,
\[
\begin{split}
 \wih A_1^\delta+\wih B_1^\delta+\wih C_1^\delta &\le
 -0.7-2.5\,\dot f(0.58)
 +2.4\,\dot g(0.30)+2.4\big(|\kappa^\delta_{3g}|+|\ep_{2g}^\delta|\big)\\
 &< -0.3+2.4\big(|\kappa^\delta_{3g}|+|\ep_{2g}^\delta|\big)\,,\\
 \wih A_2^\delta+\wih B_2^\delta+\wih C_2^\delta &\le
 -0.9-1.3\,\dot g(0.61)+
 1.6\,\dot f(0.28)+1.6\big(|\kappa^\delta_{3f}|+|\ep_{2f}^\delta|\big)\\
 &< -0.3 +1.6\big(|\kappa^\delta_{3f}|+|\ep_{2f}^\delta|\big)\,.
\end{split}
\]
It is also easy to deduce from the compactness of $\mA$ and from the
expression of $\tau^\delta$ that there exists $c>0$ such that
$|\kappa^\delta_{3g}|$, $|\ep_{2g}^\delta|$, $|\kappa^\delta_{3f}|$
and $|\ep_{2f}^\delta|$ are bounded by $c\,|\delta|$, which shows that
\eqref{4.ly} holds if $|\delta|$ is small enough.
\end{proof}
%%%%%%%%%%%%%%%%%%%%%%%%%%%%%%%%%%%%%%%%%%%%%%%%%%%%%%%%%%%%%%%%%%%%%%%%%%%%%%%%%%%%%
%%%%%%%%%%%%%%%%%%%%%%%%%%%%%%%%%%%%%%%%%%%%%%%%%%%%%%%%%%%%%%%%%%%%%%%%%%%%%%%%%%%%%
%%%%%%%%%%%%%%%%%%%%%%%%%%%%%%%%%%%%%%%%%%%%%%%%%%%%%%%%%%%%%%%%%%%%%%%%%%%%%%%%%%%%%
%%%%%%%%%%%%%%%%%%%%%%%%%%%%%%%%%%%%%%%%%%%%%%%%%%%%%%%%%%%%%%%%%%%%%%%%%%%%%%%%%%%%%
\subsection{Numerical simulations}\label{Numerical_sec}
In what follows we report on numerical simulations of \eqref{4.main}
with delay $\tau(y_1,y_2):=1-\sin(0.1\,(y_1+y_2))$.
All our computations are done with the Matlab function {\tt ddesd}.
When not specified, we use the default options. We simulate the dynamic,
visualize a set $\mA$ that attracts all numerical solutions and give
numerical evidence that $\mA$ is a one copy of the base (so that
$\delta=0.1$ is one of the values provided by Proposition~\ref{4.propcopy}).
\par
For convenience, we choose a finite set
$X\subset W^{1,\infty}_2$ of initial conditions with constant value
in $[-2,0]$ and we solve \eqref{4.main} for each
$x_0\in X$ and for a $(\theta_1^0,\theta_2^0)$ fixed:
$(0,0)$ for Figure \ref{attractor_fig}. Let us
call $y(t,x_0):=y(t,0,0,x_0)$.
In Figure \ref{attractor_fig} on the left we plot in the
$y_1-y_2$ plane the solutions $y(t,x_0)$
of \eqref{4.main}$_{(0,0)}$ for $x_0\in X$ computed
up to time $T=2000$. On the right, for the same set $X$,
we plot $(y_1(t,x_0),y_2(t,x_0),\theta_2(t))$ for $t\in[1000, \,\ 2000]$,
after getting rid of transient.
\begin{figure}[h]
\caption{Simulations of \eqref{4.main}$_{(0,0)}$.
Left: different solutions converging to $\mA$.
Right: plot of $(y_1(t),y_2(t),\theta_2(t))$ for
$t\in[1000,2000]$ for the same set of solutions.}\label{attractor_fig}
$\begin{matrix}
\includegraphics[width=175pt]{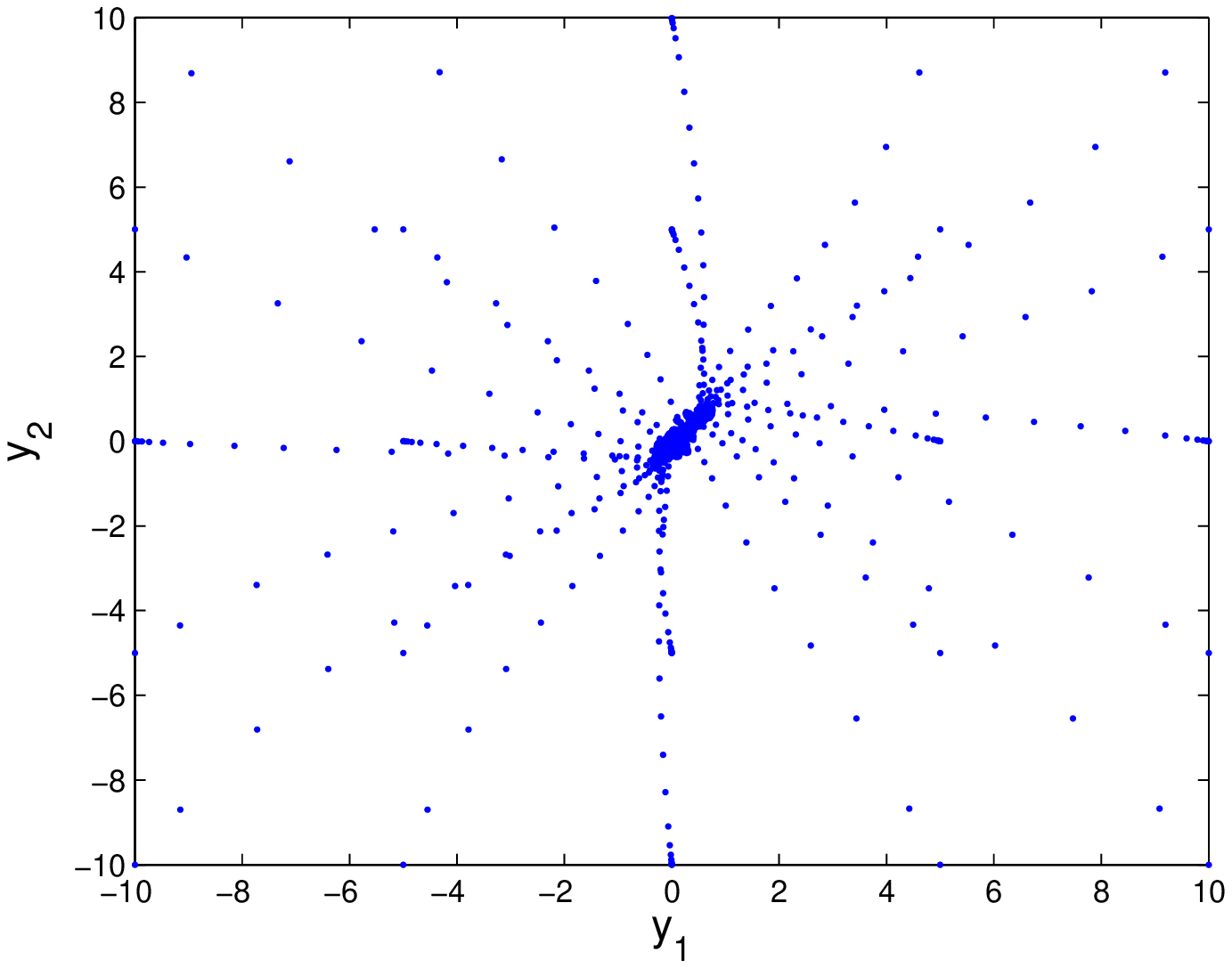} &
\includegraphics[width=175pt]{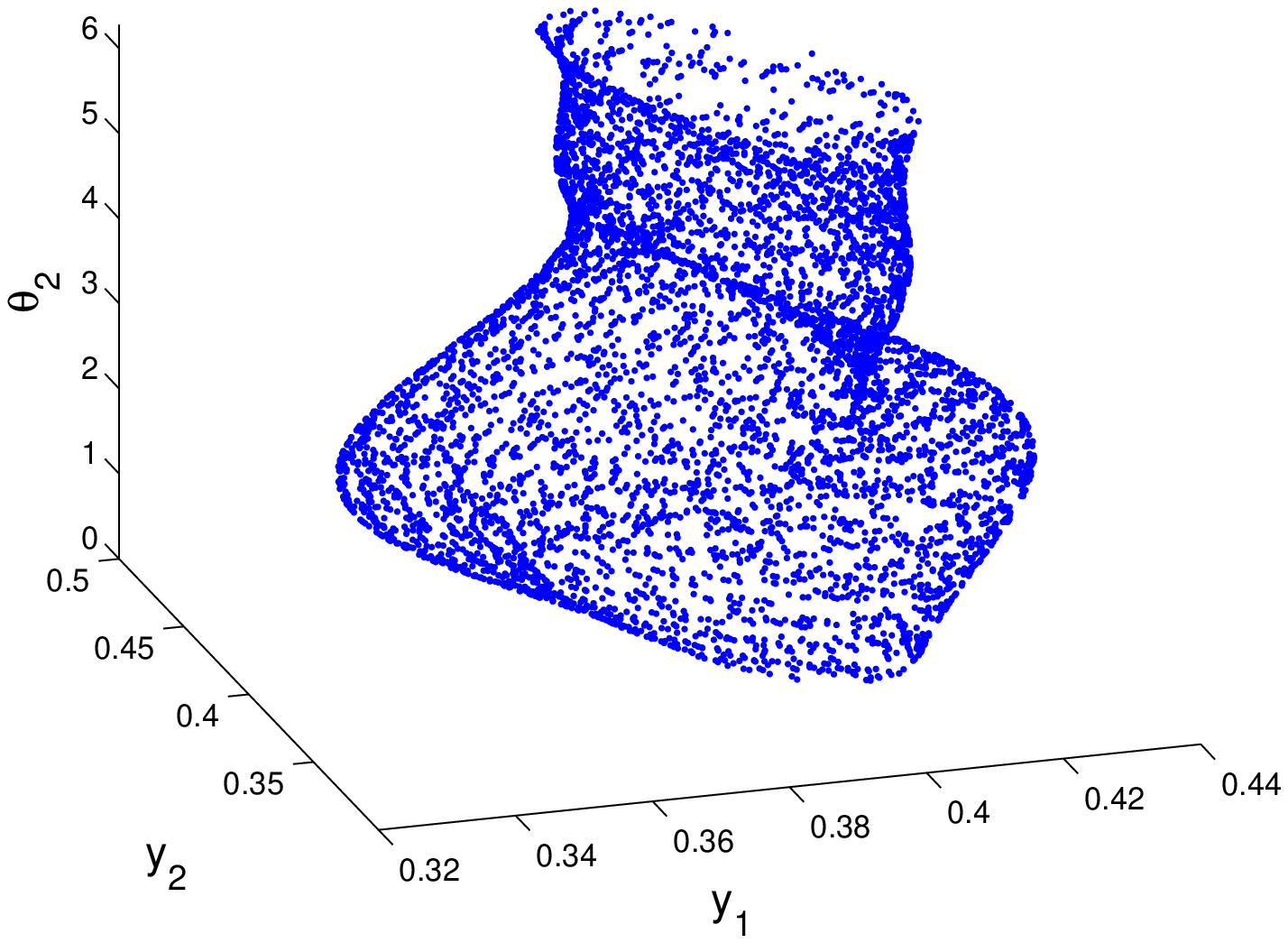}
\end{matrix}
$
\end{figure}
\par
In Figure \ref{attractor_compare_fig}, we depict the numerical attractor
for \eqref{4.main}$_{(\theta_1,\theta_2)}$ with $(\theta_1,\theta_2)\ne(0,0)$
and compare it with that obtained for $(0,0)$. On the left we plot
the attractor in the $y_1 - y_2$ plane
for $(0,0)$.
On the right, in blue we plot the numerical solutions
for $(0,0)$ and in red the numerical solutions for
$(\pi/3,\pi/2)$. Similar plots are obtained
for other systems in the family. Note that the numerical
attractors are contained in the set $\mR$ obtained in
Section \ref{4.secRectangle}.
All the above computations are performed with {\tt RelTol}$=10^{-9}$ in {\tt ddesd}.
\begin{figure}[h]
\caption{Plot of $(y_1(t),y_2(t))$ for different numerical solutions. Left: $(\theta_1^0,\theta_2^0)=(0,0)$.
Right: in blue $(\theta_1^0,\theta_2^0)=(0,0)$, in red
$(\theta_1^0,\theta_2^0)=(\pi/3,\pi/2)$.}\label{attractor_compare_fig}
$\begin{matrix}
\includegraphics[width=175pt]{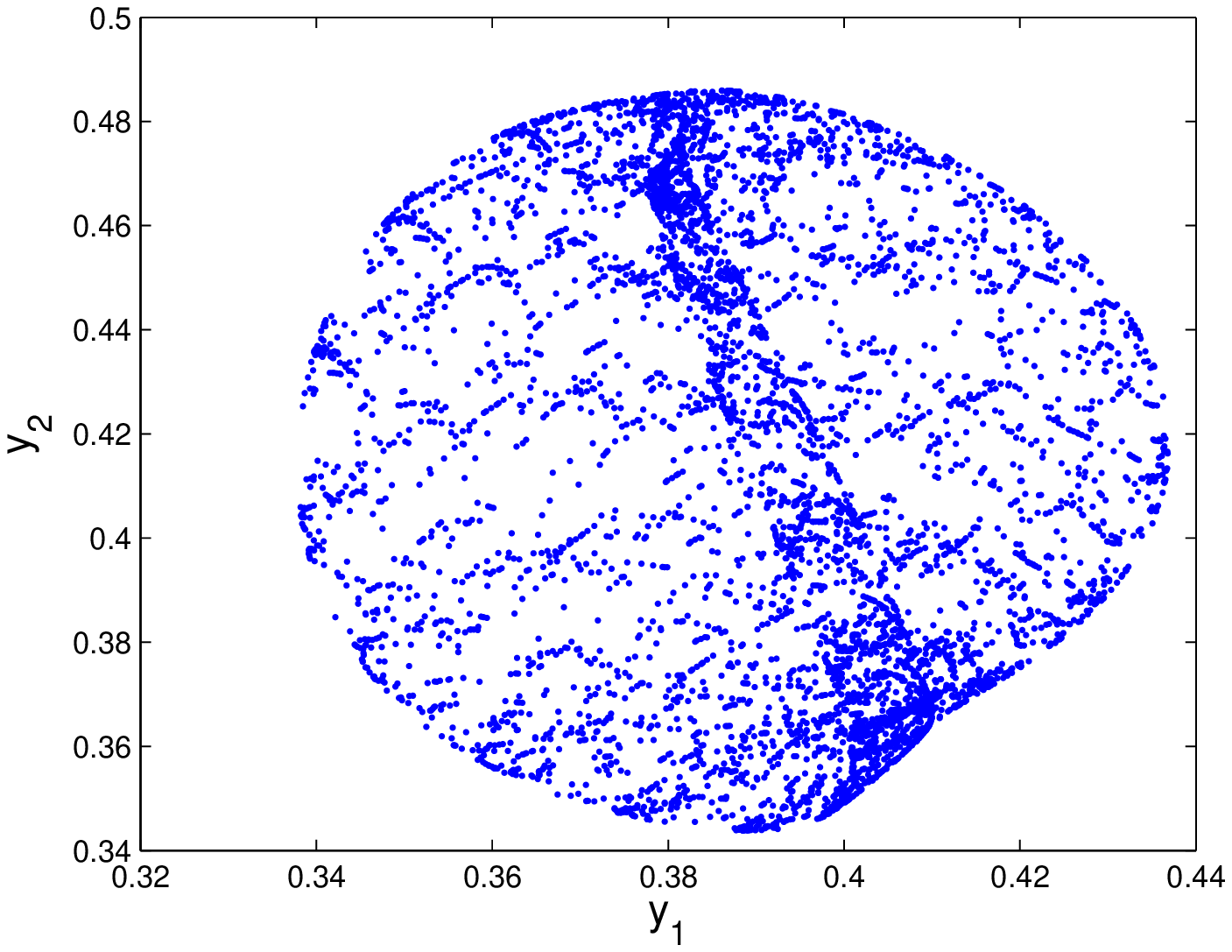} &
\includegraphics[width=175pt]{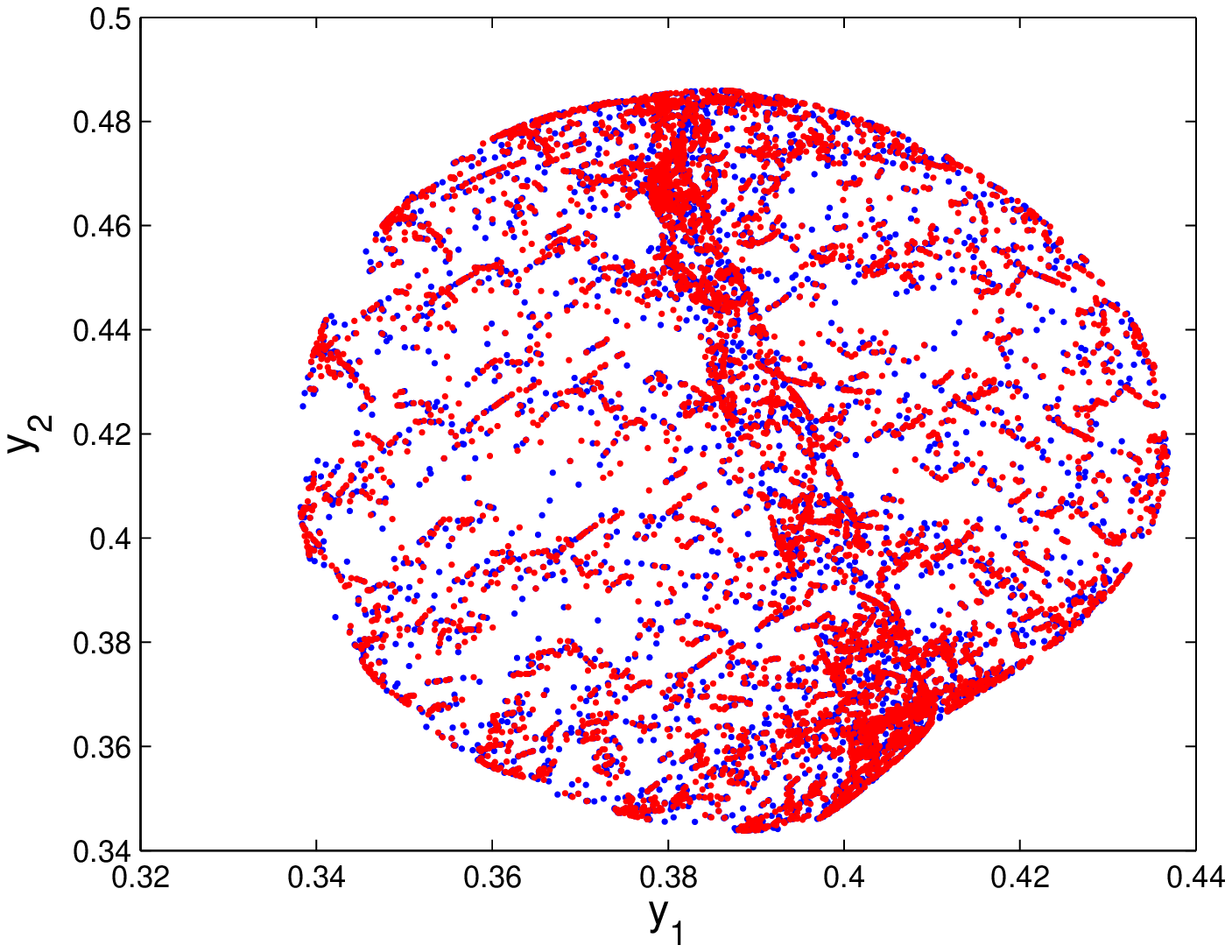}
\end{matrix} $
\end{figure}
We point out that the plots in Figure \ref{attractor_compare_fig}
do not change if,
instead of considering the whole set of initial conditions
$x_0\in X$, we consider only one initial condition $x_0$ .
This fact seems to indicate that the attractor
has negative Lyapunov exponent and hence (see Theorem~\ref{3.teorAcopy})
is a copy of the base, so that
if we fix $(\theta_1,\theta_2)$ in \eqref{4.main},
all solutions corresponding to different initial conditions will
eventually converge. This fact is also confirmed by
Figure \ref{t2_y1_t2_y2_fig},
where we plot $y_1(t,x_0)$ and $y_2(t,x_0)$ in the time interval
$[2,40]$ for all $x_0 \in X$.
\begin{figure}[h]
\caption{Plot of $(t,y_1(t,x_0))$ and $(t,y_2(t,x_0))$
for different numerical solutions.}\label{t2_y1_t2_y2_fig}
$\begin{matrix}
\includegraphics[width=175pt]{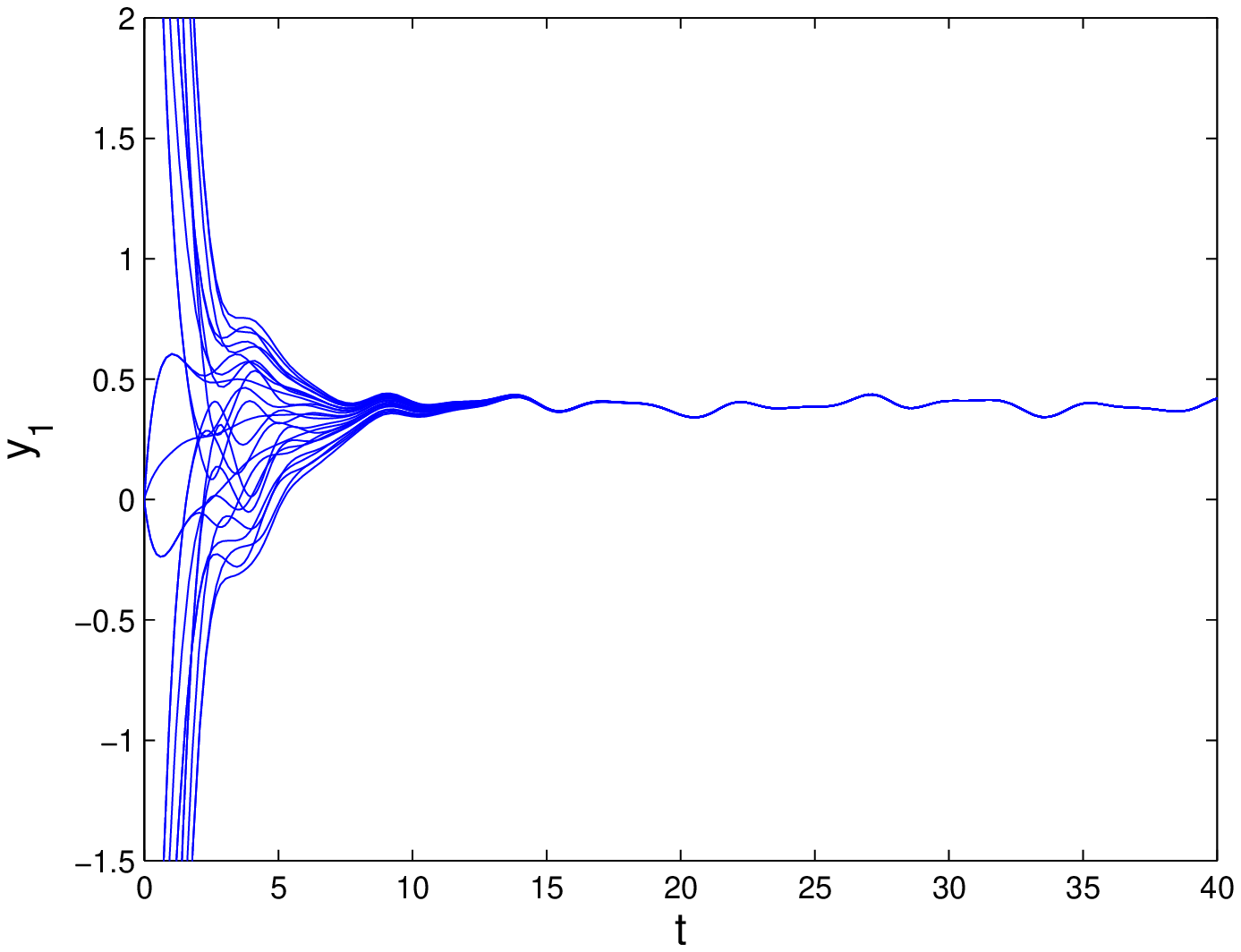} &
\includegraphics[width=175pt]{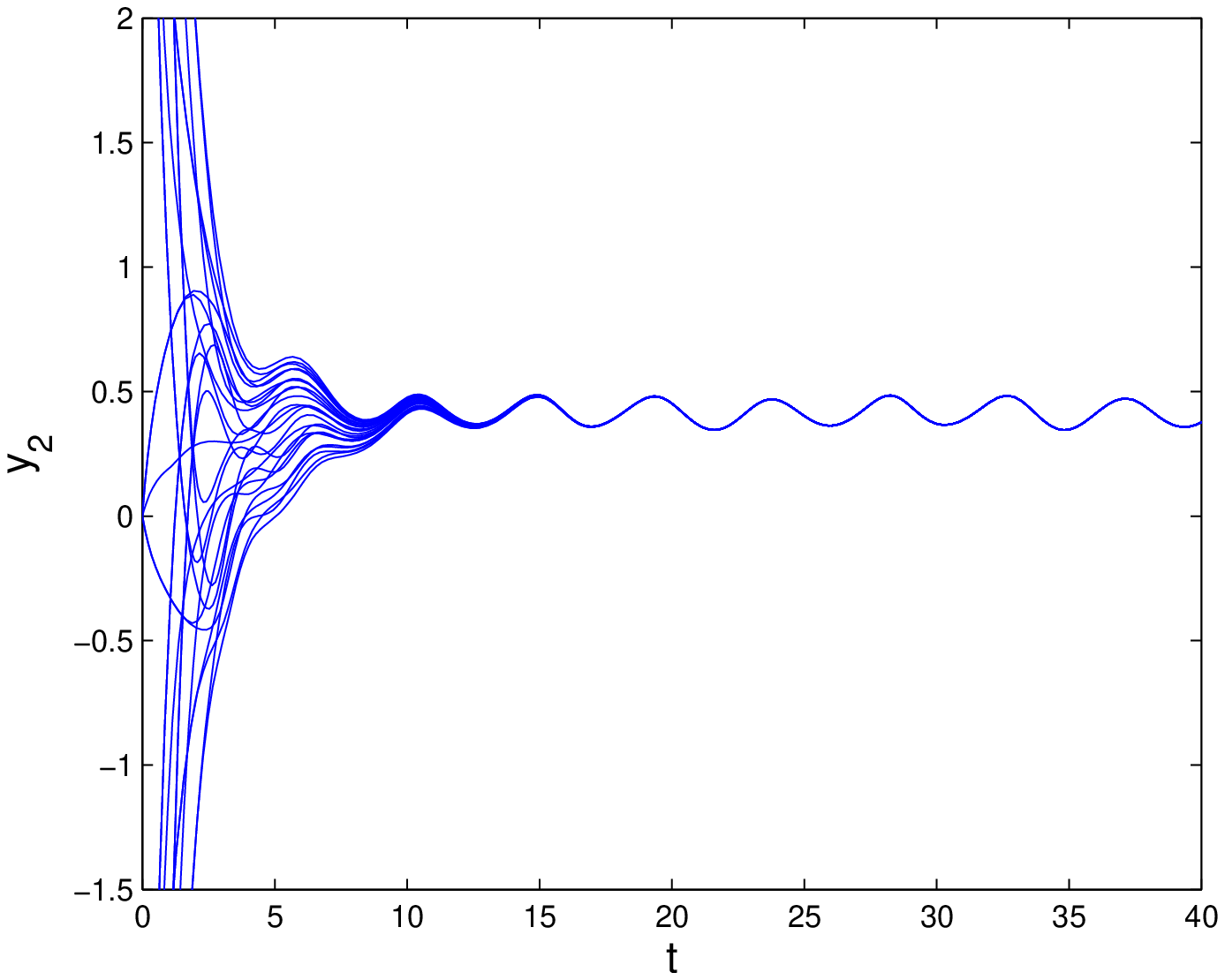}
\end{matrix} $
\end{figure}
\par
As a matter of fact, Proposition \ref{4.propcopy} guarantees that,
if $\delta=0.1$ \lq\lq is small enough\rq\rq, then
the global attractor $\mA$ is indeed a copy of the base; that is,
the graph of a continuous function $a\colon\T^2\to\wit\mR$ with
$\wit\mR$ defined in Corollary \ref{4.propzona}. But it is not clear how
to determine if $\delta=0.1$ is in that case. However, we obtain
two numerical evidences, which we describe to complete the paper.
\par
Assume that $\mA$ is the graph of $a$. Then
$a(\w)(0)=\lim_{t\to\infty} y(t,\w(-t),x_0)$ for any $x_0$,
as deduced for instance from Theorem \ref{3.teorpull}.
And, in addition, the components of the
map $\T^2\to\R^2\,,\;(\theta_1,\theta_2)\mapsto a(\theta_1,\theta_2)(0)$
define two copies of $\T^2$ in $\T^2\times\R$.
So that we first define a uniform grid of
base points $(\theta_1^j,\theta_2^k)$, $j,k=1,\ldots,20$.
We fix a tolerance $\tt tol=10^{-6}$ and an $x_0\in X$ and
compute $y^{jk}:=y(T,(\theta_1^j,\theta_2^k){\cdot}(-T),x_0)$
for $j,k=1,\ldots,20$, where $T$ is such that
the distance between the $y^{jk}$'s computed
at time $T$ and time $(T-5)$ is less than the fixed tolerance.
In Figure \ref{pullback_fig} on the left we plot
$(\theta_1^j,\theta_2^k,y_1^{jk})$ and on the right
we plot $(\theta_1^j,\theta_2^k,y_2^{jk})$.
\begin{figure}[h]
\caption{Mesh of $(\theta_1^j,\theta_2^k,y_1^{jk})$ on the left and
$(\theta_1^j,\theta_2^k,y_2^{jk})$ on the right for any
initial condition and $T=30$.}\label{pullback_fig}
$\begin{matrix}
\includegraphics[width=175pt]{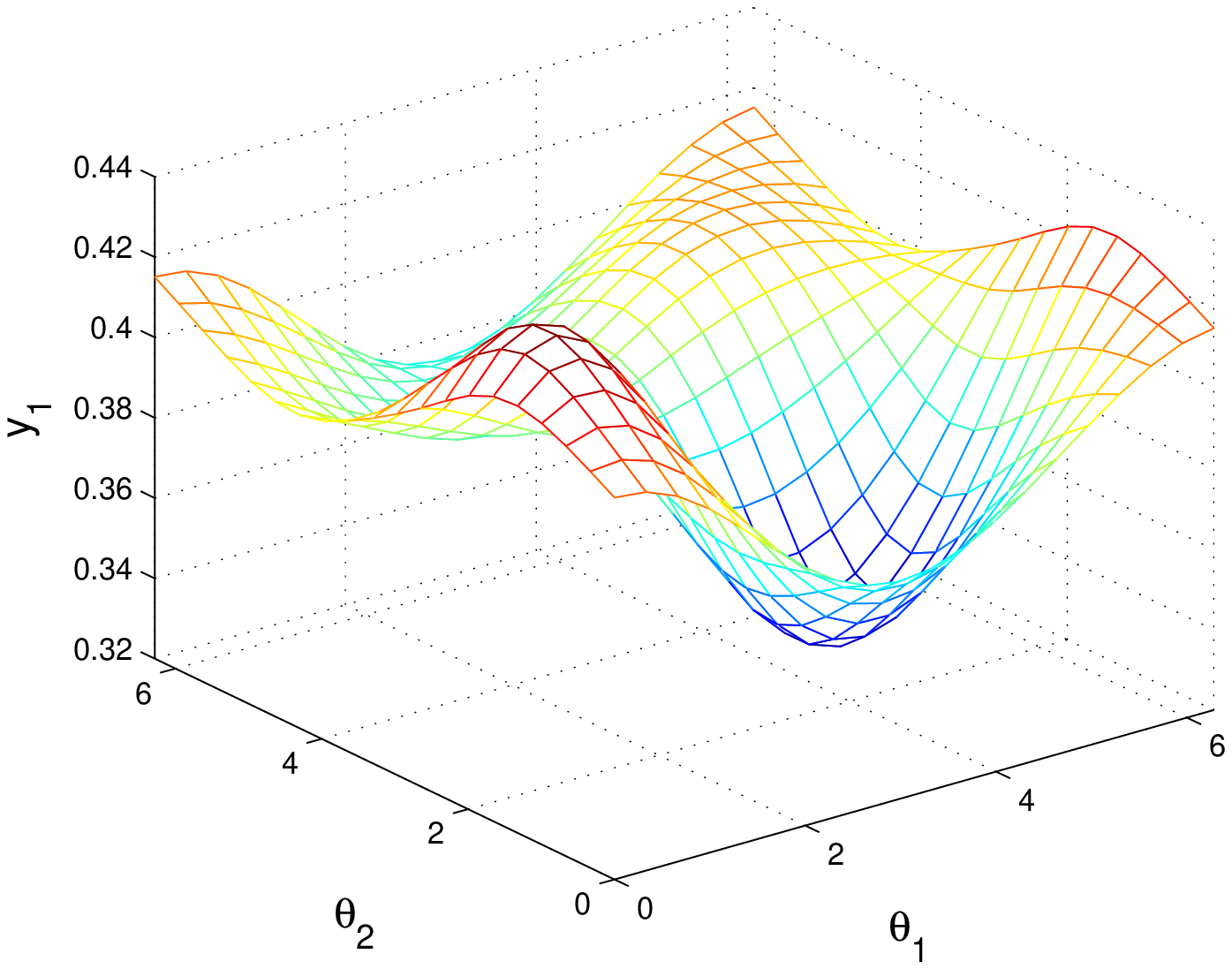} &
\includegraphics[width=175pt]{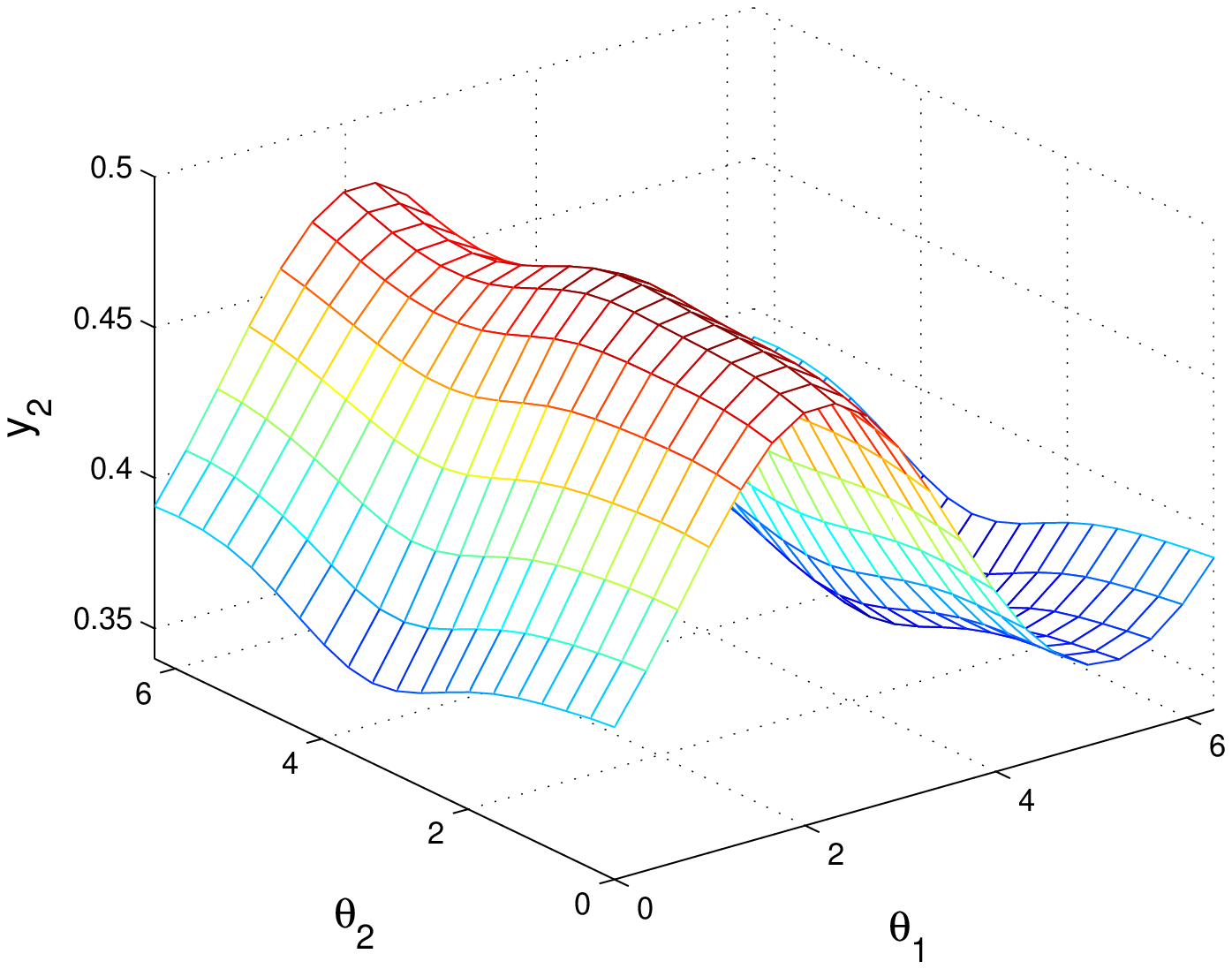}
\end{matrix}
$
\end{figure}
\par
We obtain the same plots for any other initial condition $x_0$ we consider.
Observe that the sections corresponding to $(0,\theta^k)$ and
$(2\pi,\theta^k)$ agree, likewise those for $(\theta^j,0)$ and
$(\theta^j,2\pi)$. That is, both plots seem to be graphs of
continuous functions $\T^2\to\R$, as expected.
This is the first numerical evidence of the fact that we have indeed a copy
of the base. Note also that we are acting in a \lq\lq pullback\rq\rq~sense
in order to depict the attractor.
\par
For the second evidence we will combine forward and backward methods.
First, we obtain three sections
of the plots in Figure \ref{pullback_fig}. For that,
we take a denser grid of 50 points for $\theta_1^j$, and obtain $y^j:=u(T,(\theta_1^j,\theta_2){\cdot T},x_0)(0)$
for $T=30$ and three values of $\theta_2$: $0$, $\pi/3$, $\pi/2$.
In Figure \ref{t1_y1_fig} we plot with continuous line
the three graphics for
$(\theta_1^j,y_1^j)$ on the left  and for $(\theta_1^j,y_2^j)$ on the right.
%\begin{figure}[h]
%\caption{The sections of the copies of the base of Figure
%\ref{pullback_fig} for $\theta_2=0, \pi/3, \pi/2$.}\label{pull_global_fig}
%$\begin{matrix}
%\includegraphics[width=180pt]{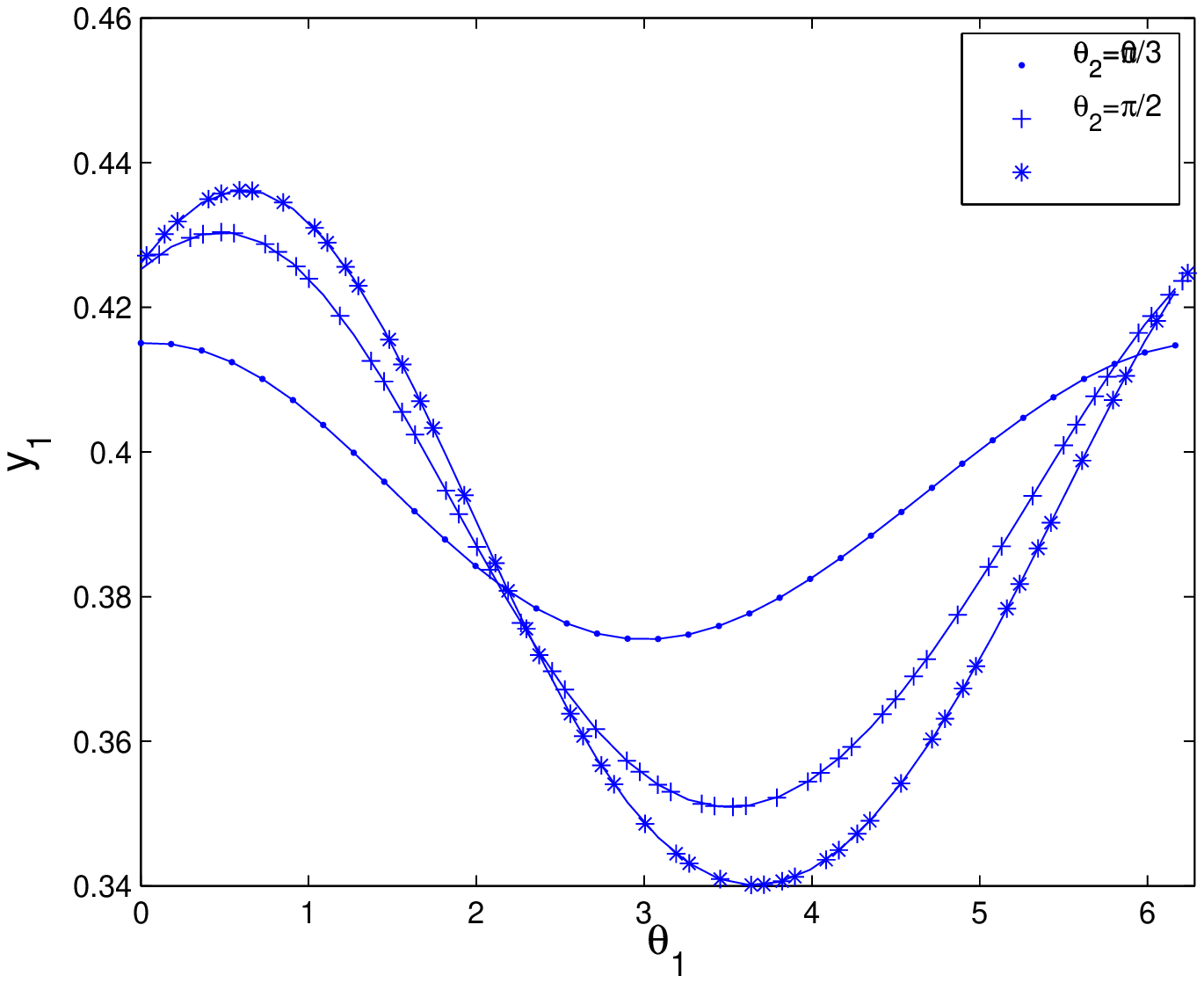} &
%\includegraphics[width=180pt]{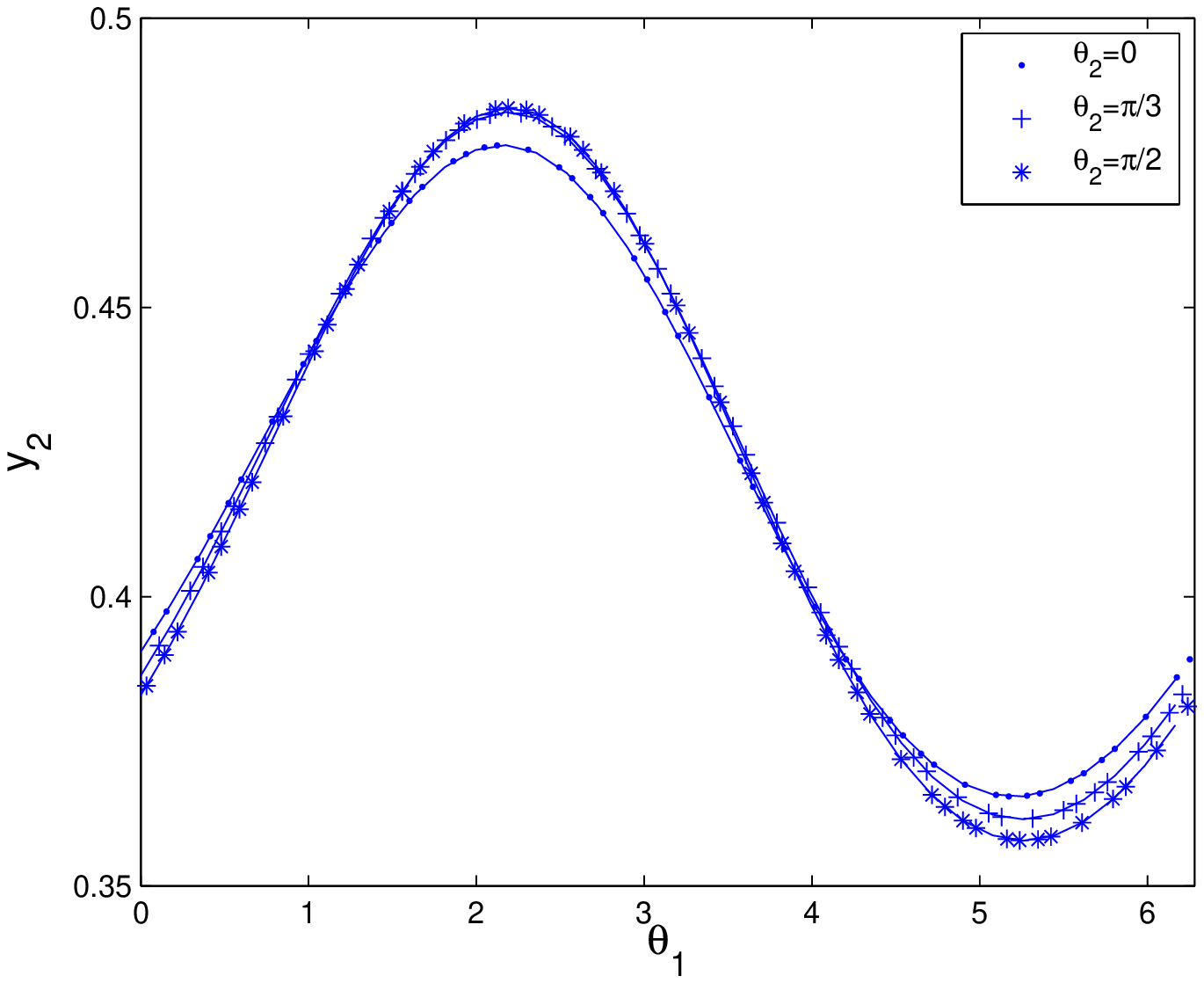}
%\end{matrix}
%$
%\end{figure}
%\par
Let now act in a \lq\lq forward sense\rq\rq: we consider three
Poincar\'{e} sections for a fixed $\theta_2$, again for
$0$, $\pi/3$, $\pi/2$. The markers in Figure \ref{t1_y1_fig} correspond to
the sets of points $(\theta_1(t),y_1(t))$ (left)
and $(\theta_1(t),y_2(t))$ (right) on the three sections after
getting rid of transient.
And this time we take a different type of initial condition:
$x_0(s):=(-t,\cos(s))$ for $s\in[-2,0]$.
As expected, we obtained the same plot for any
other initial condition we considered. The fact that
the curves in Figure \ref{t1_y1_fig} overlap
constitutes the second evidence we referred to.
\begin{figure}[h]


\caption{Sections of the copies of the base of Figure
\ref{pullback_fig} for $\theta_2=0, \pi/3, \pi/2$ (continuous lines).
Plots of $(\theta_1(t),y_1(t))$, $(\theta_1(t),y_2(t))$ on the Poincar\'e
sections $\theta_2=0, \pi/3, \pi/2$ for $T\in[1000,2000]$,
with initial condition $x_0(s):=(-s,\cos(s))$ (markers).}\label{t1_y1_fig}
$\begin{matrix}
\includegraphics[width=175pt]{section1.eps} &
\includegraphics[width=175pt]{section2.eps}
\end{matrix}
$
\end{figure}
%%%%%%%%%%%%%%%%%%%%%%%%%%%%%%%%%%%%%%%%%%%%%%%%%%%%%%%%%%%%%%%%%%%%%%
%%%%%%%%%%%%%%%%%%%%%%%%%%%%%%%%%%%%%%%%%%%%%%%%%%%%%%%%%%%%%%%%%%%%%%
%%%%%%%%%%%%%%%%%%%%%%%%%%%%%%%%%%%%%%%%%%%%%%%%%%%%%%%%%%%%%%%%%%%%%%
%%%%%%%%%%%%%%%%%%%%%%%%%%%%%%%%%%%%%%%%%%%%%%%%%%%%%%%%%%%%%%%%%%%%%%
%%%%%%%%%%%%%%%%%%%%%%%%%%%%%%%%%%%%%%%%%%%%%%%%%%%%%%%%%%%%%%%%%%%%%%
%%%%%%%%%%%%%%%%%%%%%%%%%%%%%%%%%%%%%%%%%%%%%%%%%%%%%%%%%%%%%%%%%%%%%%
%%%%%%%%%%%%%%%%%%%%%%%%%%%%%%%%%%%%%%%%%%%%%%%%%%%%%%%%%%%%%%%%%%%%%%
%%%%%%%%%%%%%%%%%%%%%%%%%%%%%%%%%%%%%%%%%%%%%%%%%%%%%%%%%%%%%%%%%%%%%%

\end{document}